\documentclass{amsart}
\usepackage{tikz}
\usepackage{xcolor}
\usepackage{amssymb,latexsym,amsmath,extarrows}
\usepackage{amsthm}
\usepackage{mathabx}
\usepackage{graphicx,mathrsfs,comment}
\usepackage{hyperref,url}
\usepackage{pict2e}
\usepackage{enumerate}
\usepackage{bm}


\usepackage{cancel}

\usepackage{amstext}
\usepackage{bbm} 

\numberwithin{equation}{section}

\usepackage{esint}

\newcommand{\cpar}{c_{\text{par}}}

\newcommand{\cI}{\mathcal I}

\newtheorem{theorem}{Theorem}[section]
\newtheorem{lemma}[theorem]{Lemma}

\newtheorem{proposition}[theorem]{Proposition}

\newtheorem*{remark*}{Remark}

\newtheorem{definition}[theorem]{Definition}
\newtheorem{corollary}[theorem]{Corollary}

\newcommand{\cE}{\mathcal{E}}
\newcommand{\C}{\mathbb{C}}

\makeatletter
\newcommand{\barredsum}{%
  \DOTSB\mathop{\mathpalette\@barredsum\relax}\slimits@
}
\newcommand{\@barredsum}[2]{%
  \begingroup
  \sbox\z@{$#1\sum$}%
  \setlength{\unitlength}{\dimexpr2pt+\ht\z@+\dp\z@\relax}%
  \@barredsumthickness{#1}%
  \vphantom{\@barredsumbar}%
  \ooalign{$\m@th#1\sum$\cr\hidewidth$#1\@barredsumbar$\hidewidth\cr}%
  \endgroup
}
\newcommand{\@barredsumbar}{%
  \vcenter{\hbox{\begin{picture}(0,1)\roundcap\Line(0,0)(0,1)\end{picture}}}%
}
\newcommand{\@barredsumthickness}[1]{
  \linethickness{%
    1.25\fontdimen8
      \ifx#1\displaystyle\textfont\else
      \ifx#1\textstyle\textfont\else
      \ifx#1\scriptstyle\scriptfont\else
      \scriptscriptfont\fi\fi\fi 3
  }%
}
\makeatother

\newcommand{\al}{\alpha}
\newcommand{\be}{\beta}
\newcommand{\ga}{\gamma}
\newcommand{\Ga}{\Gamma}
\newcommand{\de}{\delta}

\newcommand{\e}{\varepsilon}

\newcommand{\ka}{\kappa}
\newcommand{\la}{\lambda}

\newcommand{\om}{\omega}
\newcommand{\Om}{\Omega}


\newcommand{\cG}{\mathcal {G}}


\newcommand{\wt}{\widetilde}

\newcommand{\Id}{{\bf{1}}}

\newcommand{\bA}{{\bf A}}
\newcommand{\bB}{{\bf B}}

\newcommand{\bT}{{\bf T}}

\newcommand{\Kc}{K_\circ}

\newcommand{\cB}{{\mathcal B}}

\newcommand{\cQ}{{\mathcal Q}}

\newcommand{\A}{\mathbb A}

\newcommand{\bbr}{\textup{br}}

\newcommand{\R}{\mathbb{R}}

\newcommand{\T}{\mathbb{T}  }
\newcommand{\B}{\mathbb{B}}

\newcommand{\N}{\mathbb{N}}

\newcommand{\bfone}{\mathbf{1}}

\newcommand{\rap}{{\rm RapDec}}
\newcommand{\dist}{{\rm dist}}

\newcommand{\BL}{\textup{BL}}

\newcommand{\ang}{\measuredangle}

\newcommand{\supp}{{\rm supp}}

\begin{document}

\title[Oscillatory Integral Operators with Cone Condition]{%
On Oscillatory Integral Operators Satisfying\\
the cinematic curvature condition}

\date{}

\author{Xiangyu Wang} \address{ Xiangyu Wang\\  Department of Mathematics\\ University of Illinois Urbana-Champaign, USA} \email{xw70@illinois.edu}

\begin{abstract}

We establish an almost sharp $L^r \mapsto L^p$ estimate for oscillatory integral operators satisfying the cinematic curvature condition. The proof combines Wolff’s two-ends reduction with refined decoupling inequalities.

\end{abstract}
\maketitle


\section{Introduction}

\subsection{Statement of results}
Let  $n \ge 3$ and $B^m_r(y) \subset \mathbb R^m$ denote the ball of radius $r$ centered at the $y \in \mathbb{R}^m$.  

Let 
\[\A:=\{ (\xi',\xi_{n-1})\in\R^{n-1}: 1\le \xi_{n-1}\le 2, |\xi'|\le \xi_{n-1} \}. \]

Suppose $a \in C_c^{\infty}(\mathbb R^n \times \mathbb R^{n-1})$ is non-negative and supported in $B^n_1(0) \times \mathbb{A}$. Let $\phi:B^n_1(0) \times \mathbb{A} \mapsto \mathbb R$ satisfy the Cinematic Curvature Condition: 

\medskip

\noindent
$\bullet$ \textbf{(H1) Homogeneous condition.}

$\phi(x;\xi)$ is homogeneous of degree 1 in $\xi$ and smooth away from $\xi = 0$

\medskip

\noindent
$\bullet$ \textbf{(H2) Non-degeneracy condition.}

\textup{rank} $\partial^2_{x'\xi}\phi(x',x_n;\xi)=n-1$ for all $(x',x_n;\xi)\in\supp\ a$ with $\xi \ne 0$.

\medskip
\noindent
$\bullet$ \textbf{(H3) Positive definiteness condition}

Consider the Gauss map $G: \supp\ a\rightarrow S^{n-1}$ by $G(x;\xi):=\frac{G_0(x;\xi)}{|G_0(x;\xi)|}$, where
    \[G_0(x;\xi):=\bigwedge_{j=1}^{n-1}\partial_{\xi_j}\partial_{x}\phi(x;\xi). \]
Then for all $(x,\xi_0)\in\supp\ a$,
\begin{equation}
\nonumber
    \partial^2_{\xi\xi}\langle \partial_z \phi(x,\xi),G(x,\xi_0) \rangle|_{\xi=\xi_0} 
\end{equation} 
has rank $n-2$ with $n-2$ positive eigenvalues. \\\\
\smallskip
For any $\la \geq 1$ let $a^{\lambda}(x; \omega) := a(x/\lambda; \omega)$, $\phi^{\lambda}(x;\omega) :=\lambda\phi(x/\lambda;\omega)$ and define the operator $T^{\lambda}$ by $$
  T^{\lambda}f(x) := \int e^{-2\pi i\phi^{\lambda}(x;\xi)}a^{\lambda}(x;\xi)f(\xi)d\xi.
$$for all integrable $f \colon B^{n-1} \to \C$. In this case $T^{\lambda}$ is said to be an oscillatory integral operator satisfying the cinematic curvature condition. 

The main result of this article is the following.
\begin{theorem} \label{main theorem}
Suppose $T^{\lambda}$ is an oscillatory integral operator satisfying the cinematic curvature condition, then the estimate
\begin{equation}  \label{linear estimate}
    \|T^{\lambda}f\|_{L^p(\mathbb{R}^n)} \lesssim_{\phi, a} \|f\|_{L^{r}(\mathbb{R}^{n-1})}
\end{equation}
holds uniformly for $\lambda \geq 1$ whenever
\begin{equation} \label{main range}
p >  2 + \frac{8}{3n-5}
\end{equation}
and
\begin{equation} \label{rrange}
    \frac{r}{r-1} \leq \frac{n-2}{n} p
\end{equation}
\end{theorem} 

The range of $r$ in \eqref{rrange} is sharp, as shown by the Knapp example. 

Prior to this work, the best-known result on this problem was due to Schippa \cite{S}, who verified that the estimate \eqref{linear estimate} holds for
\begin{equation*}
p > 
\begin{cases}
4
& \text{if $n = 3$},\\[4pt]
2 + \frac8{3n-3}
& \text{if $n > 3$ odd},\\[4pt]
2 + \frac8{3n-4}
& \text{if $n > 3$ even}.
\end{cases}
\end{equation*}
and $r = p$ using polynomial partitioning. However, for the full range of exponents $r$ satisfying \eqref{rrange}, that result only implies that the estimate \eqref{linear estimate} holds for \[ p > 2 + \frac{4}{n-1}\]

We denote by $B_r^m$ any ball of radius $r$ in $\mathbb{R}^m$. By a $\varepsilon$-removal trick together with bilinear interpolation (see \cite{OW,S} for details), Theorem \ref{main theorem} can be reduced to
\begin{proposition} \label{epsilon main theorem}
Suppose $T^{\lambda}$ is an oscillatory integral operator satisfying the cinematic curvature condition. Then the estimate \eqref{linear estimate} may fail when
\begin{equation*}
    \|T^{\lambda}f\|_{L^p(B^n_R)} \lesssim_{\phi,a,\varepsilon} R^\varepsilon \|f\|_{L^{r}(\mathbb{R}^{n-1})}
\end{equation*}
holds uniformly for $1 \leq R \leq \lambda$ and all $\varepsilon > 0$ whenever
\begin{align}\label{range of result}
p \geq 2 + \frac{8}{3n-5}
\end{align}
and $r$ satisfies \eqref{rrange}.
\end{proposition}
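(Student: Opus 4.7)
The plan is to prove Proposition \ref{epsilon main theorem} by induction on the scale $R$, combining a broad/narrow decomposition at a small angular parameter $K^{-1}$ with Wolff's two-ends reduction and a refined $\ell^2$-decoupling inequality adapted to the cone-type geometry dictated by (H1)--(H3). This is a Guth--Wang--Zhang template transplanted to the cinematic curvature setting.

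First, I would set up a wave packet decomposition of $T^\lambda f$ at scale $R^{1/2}$. Because (H3) identifies one degenerate direction together with $n-2$ elliptic directions, the frequency set $\mathbb{A}$ is partitioned into plates $\theta$ of dimensions $R^{-1/2}$ in the $n-2$ elliptic directions and $R^{-1}$ in the degenerate direction; correspondingly, each $T^\lambda f_\theta$ is essentially concentrated on a plate-like tube $T_\theta$ of dual dimensions whose geometry is governed by the Gauss map $G$. Partitioning $B_R^n$ into $R^{1/2}$-balls and pigeonholing on the multiplicity reduces the problem to a set $Y \subset B_R^n$ on which a uniform number $M$ of wave packets contributes significantly at each scale-$R^{1/2}$ ball.

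Next, I would apply Wolff's two-ends reduction together with a broad/narrow dichotomy. The two-ends step pigeonholes on a concentration exponent and restricts attention to points whose relevant wave packets do not focus inside any ball of radius $\ll R$; this is precisely the input that makes the subsequent refined decoupling sharp. The broad/narrow split writes the domain as $\Yn \cup \Yb$, where on $\Yb$ the bulk of $|T^\lambda f(x)|$ is supplied by $n$-tuples of plates whose Gauss images are quantitatively transverse, and on $\Yn$ it is supplied by plates lying in a $K^{-1}$-neighborhood of some affine subspace. On $\Yb$ I invoke the multilinear oscillatory integral estimate in the style of Bennett--Carbery--Tao and Bourgain--Guth, in a form valid for phases satisfying (H1)--(H3). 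On $\Yn$ I rescale each narrow piece; this yields a new oscillatory operator satisfying (H1)--(H3) at scale $R/K^2$ to which the induction hypothesis applies, while the outer sum over narrow caps is handled by a refined decoupling inequality for the cinematic curvature surface that exploits both the two-ends property and the multiplicity bound $M$.

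The main obstacle, and the source of the specific threshold $p = 2 + 8/(3n-5)$, is the balancing step: because (H3) provides only $n-2$ positive curvatures rather than $n-1$, the $L^p$ gain obtainable from refined decoupling is weaker than in the elliptic/paraboloid case, and the multiplicity $M$ must be tracked carefully through the two-ends iteration. Verifying that the combined broad and narrow gains close the induction in the full range \eqref{range of result}, while absorbing all $R^\varepsilon$ losses, is the quantitative heart of the argument; the sharpness at $n=3$ (where the Knapp example forces $p=4$) suggests that the bookkeeping is tight precisely in this regime.
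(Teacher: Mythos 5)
Your high-level template (induction on scale, wave packets, broad/narrow at $K^{-1}$, Wolff's two-ends reduction, refined decoupling) matches the paper's skeleton, but the engine driving the broad estimate is different, and this is where the proposal has a genuine gap. You invoke multilinear oscillatory integral estimates in the Bennett--Carbery--Tao / Bourgain--Guth style on $\Yb$. The paper deliberately avoids this: it points out that transverse-equidistribution inputs, on which the polynomial-partitioning/multilinear machinery rests, may fail for phases satisfying only (H1)--(H3). Instead, the broad part is estimated via the broad norm $\|\cdot\|_{\BL^p_A}$ and a H\"older interpolation $\|T^\lambda f_k\|_{\BL^p_A}\lesssim \|T^\lambda f_k\|_{\BL^2_{A/2}}^{\alpha}\|T^\lambda f_k\|_{\BL^q_{A/2}}^{1-\alpha}$ with $q=2+\tfrac{4}{n-2}$ and $\alpha=\tfrac{n-1}{3n-1}$; the $\BL^q$ factor is controlled by refined decoupling (Theorem~\ref{refdecthm}) together with the multiplicity and two-ends parameters from Algorithms~1--2, and the $\BL^2$ factor by a local $L^2$ estimate on $R^{1/2}$-cubes (Corollary~\ref{l2 estimate}). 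Solving $\tfrac1p=\tfrac\alpha2+\tfrac{1-\alpha}{q}$ is precisely what produces the threshold $p=2+\tfrac{8}{3n-5}$; your write-up never derives this exponent and the proposed multilinear route would not obviously reproduce it.

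Two further inaccuracies. First, the frequency plates: you state the plates have size $R^{-1}$ in the degenerate direction, but with the cone-type homogeneity (H1) the plates $\tilde\theta$ are $1\times R^{-1/2}\times\cdots\times R^{-1/2}$, i.e.\ the degenerate (radial) direction is long, not short; the $R^{-1}$ scaling is a paraboloid feature and using it would change all the dual-tube dimensions. Second, you say the outer sum over narrow caps is handled by refined decoupling; in the paper refined decoupling appears only in the broad $L^q$ step, while the narrow part is closed purely by Lorentz rescaling (Lemma~\ref{rescaling}) and the induction hypothesis, with the $K^{2-(n-2)(p-2)}$ gain absorbing the cap count. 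These are not cosmetic points: they are exactly the places where the ``quantitative heart'' you acknowledge is needed, and the proposal as written does not supply it.
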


Our proof of Proposition \ref{epsilon main theorem} relies primarily on Wolff’s two-ends reduction and refined decoupling inequalities. This methodological framework has recently been applied in the study of local smoothing estimates by Gan-Wu \cite{GW} and in the the restriction problem by Wang–Wu \cite{WW}. Accordingly, our argument may be viewed as similar in spirit to their approach in the present setting of oscillatory integral operators satisfying the cinematic curvature condition. 

In addition, we provide a counterexample showing that when the estimate \eqref{linear estimate} may fail. After an earlier version of this manuscript was made available, we learned that a related example had previously been given in \cite{S}. For completeness, we include our version here.  This example is inspired by the work of Bourgain-Guth \cite{BG} and arises from the Kakeya compression phenomenon, where tubes are arranged within a neighborhood of a low-dimensional set. A detailed discussion of this example is given in \ref{necessary condition}. 
\begin{proposition} \label{necessary range}
Suppose $T^{\lambda}$ is an oscillatory integral operator satisfying the cinematic curvature condition, then the estimate \eqref{linear estimate} may fail when
\begin{equation} \label{necessary range p}
p <
\begin{cases}
2 + \frac8{3n-1}
& \text{if $n$ is odd},\\[4pt]
2 + \frac8{3n-2}
& \text{if $n$ is even}.
\end{cases}
\end{equation}
and $r$ satisfies \eqref{rrange}.
\end{proposition} 
Comparing \eqref{main range} with \eqref{necessary range}, we see that our result is almost sharp, up to an $O(n^{-2})$ error in the 
$p$-exponent.

Estimates for oscillatory integral operators satisfying the cinematic curvature condition are closely related to several central problems in modern harmonic analysis, most notably the local smoothing problem and the cone restriction problem. These operators, by incorporating variable coefficients and geometric curvature conditions, provide a unifying analytic framework that captures many classical oscillatory phenomena while extending beyond the constant-coefficient cone scenario. Because sharp bounds in this setting would deepen our understanding of decoupling, restriction, and dispersive regularity estimates, progress on this problem has the potential to inform a broad range of techniques in Fourier analysis, partial differential equations, and geometric measure theory. Accordingly, the study of such estimates carries substantial intrinsic value, both from a conceptual standpoint and in terms of possible applications to longstanding conjectures in harmonic analysis.
\bigskip

\subsection{Structure of the article}

The structure of this paper is as follows. In Section \ref{necessary condition} , we construct a counterexample demonstrating that the estimate \eqref{linear estimate} may fail when $p$ lies inside the range \eqref{necessary range p}. Section \ref{section-preliminary} presents several preliminary reductions and methodological tools, including the narrow–broad argument and wave packet decomposition. Section \ref{section-algorithm} is devoted to the two-ends reduction. In Section \ref{concludetheproof}, we complete the proof of the main theorem. Finally, Section \ref{epsilon removal} establishes an $\varepsilon$-removal lemma for oscillatory integral operators satisfying the cinematic curvature condition.

\bigskip

\subsection{Notations} We write $\#E$ for the cardinality of a finite set $E$. If $\mathcal{E}$ is a family of sets in $\mathbb{R}^n$, we use $\cup \mathcal{E}$ to denote $\bigcup_{E \in \mathcal{E}} E$. For $A,B \ge 0$, we write $A \lesssim B$ to mean $A \le C B$ for some absolute constant $C$, and $A \sim B$ to mean both $A \lesssim B$ and $B \lesssim A$. For $R>1$, we write $A \lesssim_\eta B$ to indicate that $A \le C_\eta R^\eta B$ for all $\eta>0$. We use $\mathrm{RapDec}(R)$ to denote a rapidly decaying quantity, meaning that for all $R \ge 1$,
\[
\mathrm{RapDec}(R) \le C_N R^{-N} \qquad \textrm{for some very large $N$} .
\]

Finally, for a set $X$, we write $N_r(X)$ for its $r$-neighborhood.
\medskip

\subsection{Choice of Parameters} \(0 < \varepsilon \ll 1,\ 1 \leq R \leq \lambda,\ \
K = R^{\varepsilon^{50}},\ K_{\circ} = R^{\varepsilon^{100}},\ \kappa = R^{\varepsilon^{500}},\ 
\delta = \varepsilon^{1000}.\) With this choice of parameters, we obtain
\[
R^{\delta} \ll \kappa \ll K_{\circ} \ll K \ll R \leq \lambda .
\]
\medskip

\subsection{Acknowledgments} The author would like to express sincere gratitude to Prof. Xiaochun Li for suggesting this research topic, for numerous insightful discussions, and for carefully reading earlier versions of this manuscript. His guidance and suggestions were invaluable throughout the development of this work. The author is also grateful to Prof. Shukun Wu for several helpful conversations and constructive comments that contributed to clarifying some technical aspects of this paper.

\bigskip

\section{Necessary Conditions} \label{necessary condition}
In this section, we examine an example of oscillatory integral operators that satisfy the cinematic curvature condition, with the aim of proving Proposition \ref{necessary range}. The construction is motivated by the work of Bourgain–Guth \cite{BG} (see also \cite{MS} \cite{Wisewell} \cite{GHI}) and is closely related to the Kakeya compression phenomenon, in which a large collection of tubes is efficiently packed into a neighbourhood of a low-dimensional set.

After an earlier version of this manuscript was circulated, we became aware that a related example had previously appeared in \cite{S}. For completeness, we include our version here with full details.

Define $\bm{A}: \mathbb{R} \rightarrow \mathrm{Sym}(n-2, \mathbb{R})$ by
\begin{equation*}
\bm{A}(t) := 
\begin{cases}
\underbrace{\begin{pmatrix}
t & t^2 \\
t^2 & t + t^3
\end{pmatrix}
\oplus \dots \oplus 
\begin{pmatrix}
t & t^2 \\
t^2 & t + t^3
\end{pmatrix}}_{\text{$\lfloor \frac{n-2}{2} \rfloor$-fold}}
\oplus \,(t),
& \text{if $n$ is odd}, \\[4pt]
\underbrace{\begin{pmatrix}
t & t^2 \\
t^2 & t + t^3
\end{pmatrix}
\oplus \dots \oplus 
\begin{pmatrix}
t & t^2 \\
t^2 & t + t^3
\end{pmatrix}}_{\text{$\lfloor \frac{n-2}{2} \rfloor$-fold}},
& \text{if $n$ is even}.
\end{cases}
\end{equation*}

Let $a$ be a bump function such that $a(x,\xi) = 1$ for all $(x,\xi) \in B^n_{1/2}(0) \times 0.8\mathbb{A}$ where
\[0.8\A:=\{ (\xi',\xi_{n-1})\in\R^{n-1}: 1.1\le \xi_{n-1}\le 1.9, |\xi'|\le 0.8\xi_{n-1} \}. \] and
\begin{equation*}\label{model phase}
\phi(x',x_n;\xi',\xi_{n-1}) := \langle x', (\xi',\xi_{n-1}) \rangle + \frac{1}{2\xi_{n-1}}\langle \bm{A}(x_n)\xi', \xi'\rangle
\end{equation*}
We claim that $\phi$ satisfies \textbf{(H1)}, \textbf{(H2)} and \textbf{(H3)}. 
\begin{proposition} 
$\phi$ satisfies \textbf{(H1)}, \textbf{(H2)} and \textbf{(H3)}.
\end{proposition}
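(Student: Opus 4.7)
The plan is to verify (H1), (H2), and (H3) directly from the explicit form of
$\phi(x',x_n;\xi',\xi_{n-1}) = \langle x',(\xi',\xi_{n-1})\rangle + \frac{1}{2\xi_{n-1}}\langle\bm{A}(x_n)\xi',\xi'\rangle$.
Conditions (H1) and (H2) are immediate: the linear term is degree one in $\xi$ and $\frac{1}{2\xi_{n-1}}\langle\bm{A}(x_n)\xi',\xi'\rangle$ is of degree $2-1=1$ in $\xi$, both smooth whenever $\xi_{n-1}\neq 0$ (which holds on $\mathbb{A}$ since $\xi_{n-1}\ge 1$ there); while $\partial_{x'}\phi = (\xi',\xi_{n-1}) = \xi$, so $\partial^2_{x'\xi}\phi = I_{n-1}$ has full rank $n-1$.

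The content of the proof lies in (H3). First compute
$\partial_x\phi = \bigl(\xi_1,\dots,\xi_{n-2},\xi_{n-1},\,\tfrac{1}{2\xi_{n-1}}\langle\bm{A}'(x_n)\xi',\xi'\rangle\bigr)$. The $n\times(n-1)$ matrix with columns $\partial_{\xi_j}\partial_x\phi$ has top $(n-1)\times(n-1)$ block equal to $I_{n-1}$ and an additional bottom row coming from the $x_n$-component; computing its null direction (equivalently, the $(n-1)$-fold wedge) yields
\[
G_0(x,\xi)\;\propto\;\Bigl(-\tfrac{\bm{A}'(x_n)\xi'}{\xi_{n-1}},\;\tfrac{\langle\bm{A}'(x_n)\xi',\xi'\rangle}{2\xi_{n-1}^2},\;1\Bigr).
\]
Write $G = G_0/|G_0|$ with last component $G_n>0$ (fixing the orientation). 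Since the first $n-1$ components of $\partial_x\phi$ are linear in $\xi$, they contribute nothing to the Hessian in $\xi$, and a direct computation gives the factorization
\[
\partial^2_{\xi\xi}\langle \partial_x\phi,G(x,\xi_0)\rangle\big|_{\xi=\xi_0} \;=\; \frac{G_n}{\xi_{n-1}}\,B\,\bm{A}'(x_n)\,B^T,\qquad B := \begin{pmatrix} I_{n-2} \\ -(\xi')^T/\xi_{n-1}\end{pmatrix},
\]
where $B$ is $(n-1)\times(n-2)$ of full rank and $\ker B^T = \mathrm{span}\{\xi_0\}$, in accordance with Euler's theorem applied to the degree-one function $\xi\mapsto\langle\partial_x\phi(x,\xi),G\rangle$. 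Thus the signature and rank of the Hessian reduce to those of the $(n-2)\times(n-2)$ symmetric matrix $\bm{A}'(x_n)$.

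It remains to check that $\bm{A}'(x_n)$ is positive definite on $\supp a$. Each $2\times 2$ block of $\bm{A}'(t)$ equals $\begin{pmatrix}1 & 2t\\ 2t & 1+3t^2\end{pmatrix}$, with determinant $1-t^2$ and trace $2+3t^2>0$, so it is positive definite whenever $|t|<1$; the additional $1\times 1$ block $(1)$ in the odd case is trivially positive. Since $\supp a\subset B_1^n(0)$ (and one may assume the bump $a$ is supported strictly away from the sphere $|x_n|=1$), $\bm{A}'(x_n)$ is positive definite throughout $\supp a$. Combined with $G_n>0$ and $\xi_{n-1}\ge 1$, the Hessian is positive semidefinite of rank $n-2$ with kernel $\mathrm{span}\{\xi_0\}$, completing (H3). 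The only delicate point is the boundary degeneracy of $\bm{A}'$ at $|x_n|=1$, which is harmless under the mild support assumption just noted; apart from that, the verification is mechanical, with the only nontrivial structural step being the factorization $\tfrac{G_n}{\xi_{n-1}}B\bm{A}'(x_n)B^T$, which transfers positivity of the full $(n-1)\times(n-1)$ Hessian to the explicit symmetric matrix $\bm{A}'(x_n)$.
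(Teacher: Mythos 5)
Your proof is correct and takes essentially the same route as the paper's: compute $\partial_x\phi$, the Gauss direction $G_0$, and the Hessian, then reduce rank and definiteness to those of $\bm{A}'(x_n)$. Your explicit factorization $M\propto B\bm{A}'(x_n)B^{T}$ with $\ker B^{T}=\mathrm{span}\{\xi_0\}$ is just a cleaner packaging of the quadratic-form identity the paper uses (which, incidentally, has a stray $A'(x_n)$ inside the parenthesis), and your determinant computation $\det=1-t^2$ for the $2\times2$ blocks makes explicit what the paper merely asserts. Your remark on the degeneracy at $|x_n|=1$ is a genuine though minor subtlety the paper elides; it is harmless because $a\in C_c^\infty$ forces $\supp a$ to be a compact subset of the open ball $B_1^n(0)$, so $|x_n|$ stays uniformly below $1$.
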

\begin{proof}
It is clear that $\phi$ satisfies \textbf{(H1)} and \textbf{(H2)}. Therefore, it remains to verify that $\phi$ also satisfies \textbf{(H3)}. For any $x =(x',x_n) \in B^n_1(0)$ and $\xi = (\xi',\xi_{n-1}),\xi_0 = (\xi_0',\xi_{0,n-1})\in \mathbb{A}$,
\begin{align*}
\partial_x \phi(x,\xi) = (\xi,\frac{\langle A'(x_n)\xi',\xi' \rangle}{2\xi_{n-1}}), \qquad
G(x,\xi_0) = C(x,\xi_0)(-\frac{A'(x_n)\xi_0'}{\xi_{0,n-1}},\frac{A(x_n)\xi_0'}{\xi^2_{0,n-1}},1)
\end{align*}
where $C(x,\xi_0) > 0$ denotes a normalization constant. Now
\begin{equation*}
    \partial^2_{\xi\xi}\langle \partial_ \phi(x,\xi),G(x,\xi_0) \rangle|_{\xi=\xi_0} = \frac{C(x,\xi_0)}{\xi_{0,n-1}}\begin{pmatrix}
    A'(x_n) & -\frac{A'(x_n)\xi_0'}{\xi_{0,n-1}} \\
    -(\frac{A'(x_n)\xi_0'}{\xi_{0,n-1}})^\top & \frac{\langle A'(x_n)\xi_0',\xi_0' \rangle}{\xi_{0,n-1}^2}
    \end{pmatrix}
\end{equation*}
Let
\begin{equation*}
    M = \begin{pmatrix}
    A'(x_n) & -\frac{A'(x_n)\xi_0'}{\xi_{0,n-1}} \\
    -(\frac{A'(x_n)\xi_0'}{\xi_{0,n-1}})^\top & \frac{\langle A'(x_n)\xi_0',\xi_0' \rangle}{\xi_{0,n-1}^2}
    \end{pmatrix}
\end{equation*}
It remains to show that $M$ has rank $n-2$ with $n-2$ positive eigenvalues. Obviously, the rank of this matrix is $n-2$. In addition, for any $v=(v',v_{n-1}) \in \mathbb{R}^{n-1}$ with $v \ne 0$
\begin{equation*}
v^\top M v = \\ 
    \left(v'- \frac{A'(x_n)\xi_0'}{\xi_{0,n-1}}v_{n-1}\right)^\top A'(x_n) \left(v'- \frac{A'(x_n)\xi_0'}{\xi_{0,n-1}}v_{n-1}\right) \geq 0
\end{equation*}
since $A'(x_n)$ is positive definite. This finishes the proof. 
\end{proof}
\medskip
Cover $B^{n-2}_{4/5}(0)$ by finite-overlapping caps $\theta$ of diameter $\sim\lambda^{-1/2}$. Let $\{\psi_\theta\}$ be a smooth partition of unity subordinate to this cover. For each $\theta$ let $\omega_\theta$ denote the center of $\theta$ and define $f_\theta: \mathbb{R}^{n-1} \rightarrow \mathbb{R}$ by
\begin{equation*}
    f_\theta(\xi',\xi_{n-1})
:=
\begin{cases}
e^{2\pi i \lambda \langle v_\theta,\xi \rangle}
\,\psi_\theta(\xi'/\xi_{n-1}),
& \text{if } (\xi',\xi_{n-1}) \in \mathbb{A}, \\[4pt]
0,
& \text{otherwise}.
\end{cases}
\end{equation*}
where $\xi = (\xi',\xi_{n-1})$ and
\begin{equation*}
    v_{\theta,2j-1} := -\omega_{\theta,2j}, \qquad  v_{\theta,2j} = v_{\theta,n-2} = v_{\theta,n-1} =0, \qquad \mathrm{for} \quad 1\leq j \leq \left\lfloor \frac{n-2}{2} \right\rfloor
\end{equation*}
For $(u',u_{n-1},x_n) \in B^n_\lambda(0)$ let
\begin{equation*}
\gamma^\lambda_\theta(u',u_{n-1},x_n) := (u'-\lambda\mathbb{A}(\frac{x_n}{\lambda})\omega_\theta, u_{n-1}+\frac{\lambda}{2}\langle \mathbb{A}(\frac{x_n}{\lambda})\omega_\theta,\omega_\theta\rangle) 
\end{equation*}
Let $\tilde{\theta} \subset \mathbb{R}^{n-1}$ be a box of dimensions $1 \times \lambda^{-1/2} \times ...\times \lambda^{-1/2}$ such that
\begin{equation*}
\{(\xi',\xi_{n-1}):\frac{\xi'}{\xi_{n-1}} \in \theta, 1 \leq \xi_{n-1} \leq 2\} \subset \tilde{\theta}
\end{equation*}
and $\bar{T}_\theta$ denote the dual box of $\tilde{\theta}$ centered at $\lambda v_\theta$. Define $T_\theta \subset \mathbb{R}^n$ by
\begin{equation*}
T_\theta := \{(\gamma^\lambda_\theta(u,x_n),x_n):u \in \lambda^\delta \bar{T}_\theta\ ,\quad -\lambda \leq x_n \leq \lambda\} \cap B^n_\lambda(0)
\end{equation*}

For each $\theta$, there exists a significant portion of $T_\theta$ that guarantees $T^\lambda f_\theta$ maintains a nontrivial size. 
\begin{proposition}\label{lower bound}
For each $\theta$ there exists $S \subset T_\theta$ such that $|S| \gtrsim \lambda^{n/2}$ and for all $x \in S$
\begin{equation}\label{lower bound example}
T^\lambda f_\theta(x) \gtrsim \lambda^{-O(\delta)-\frac{n-2}{2}}
\end{equation}
\end{proposition}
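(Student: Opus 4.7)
The plan is to reduce the oscillatory integral $T^\lambda f_\theta(x)$ to a unit-scale oscillatory integral with bounded phase, and then verify the claimed lower bound on an appropriate subset of $T_\theta$ by direct phase analysis. First I would change variables $\xi_{n-1}=s\in[1,2]$ and $\xi'/\xi_{n-1}=\omega_\theta+\lambda^{-1/2}\tilde\eta$ with $\tilde\eta\in B^{n-2}_1(0)$, which extracts the Jacobian factor $\lambda^{-(n-2)/2} s^{n-2}$ responsible for the target $\lambda^{-(n-2)/2}$ and normalises $\psi_\theta(\xi'/\xi_{n-1})$ to a unit-scale positive bump $\psi(\tilde\eta)$. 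By the degree-$1$ homogeneity of $\phi$ in $\xi$, the full phase factorises as $sF(x,\tilde\eta)$, and Taylor expansion in $\tilde\eta$ yields
\[
F(x,\tilde\eta)=F_0(x)+\lambda^{-1/2}\langle F_1(x),\tilde\eta\rangle-\tfrac12\langle \bm{A}(x_n/\lambda)\tilde\eta,\tilde\eta\rangle.
\]

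Next I would substitute the parametrisation $x=(\gamma^\lambda_\theta(u,x_n),x_n)$, namely $x_{1:n-2}=u'-\lambda\bm{A}(x_n/\lambda)\omega_\theta$ and $x_{n-1}=u_{n-1}+\tfrac{\lambda}{2}\langle \bm{A}(x_n/\lambda)\omega_\theta,\omega_\theta\rangle$. I expect all the $\lambda$-scale contributions involving $\bm{A}(x_n/\lambda)\omega_\theta$ to cancel exactly, leaving the clean formulas
\[
F_0(x)=-\langle u-\lambda v_\theta,(\omega_\theta,1)\rangle, \qquad F_1(x)=-(u'-\lambda v_{\theta,1:n-2}),
\]
both \emph{independent of $x_n$}.

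With these formulas in hand, pick small absolute constants $c_0,\rho_0>0$ and define $S$ by requiring $|x_n|\leq \rho_0\lambda$ together with $u$ lying in a sub-box of $\bar T_\theta$ centred at $\lambda v_\theta$ of sidelengths $c_0,c_0\lambda^{1/2},\dots,c_0\lambda^{1/2}$ in the orthonormal basis adapted to $(\omega_\theta,1)$. A direct volume count yields $|S|\gtrsim \rho_0 c_0^{n-1}\lambda^{n/2}\gtrsim \lambda^{n/2}$. On $S$ one has $|F_0(x)|\leq Cc_0$, $|\lambda^{-1/2}\langle F_1(x),\tilde\eta\rangle|\leq Cc_0|\tilde\eta|$, and $\|\bm{A}(x_n/\lambda)\|\leq C\rho_0$ (using $\bm{A}(0)=0$ and smoothness of $\bm{A}$), so $|sF(x,\tilde\eta)|\leq C(c_0+\rho_0)$ uniformly for $|\tilde\eta|\leq 1$ and $s\in[1,2]$. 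Choosing $c_0,\rho_0$ small enough that $C(c_0+\rho_0)<1/8$ guarantees $\mathrm{Re}(e^{-2\pi isF})\geq 1/2$ on the full support of the amplitude, and integrating the real part against the non-negative bump $\psi(\tilde\eta)\,a^\lambda(x;\xi)\,s^{n-2}$ produces the desired bound $|T^\lambda f_\theta(x)|\gtrsim \lambda^{-(n-2)/2}$, which is in fact stronger than \eqref{lower bound example}.

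The heart of the argument is the exact cancellation in the second step, which encodes the fact that $\gamma^\lambda_\theta$ and the modulation $v_\theta$ are precisely tailored so that $T_\theta$ is the spatial wave packet dual to $\tilde\theta$. Without this cancellation the constant-in-$\tilde\eta$ part of $F$ would contain a residual term of size $\lambda$, which would cause $e^{-2\pi isF}$ to oscillate rapidly and destroy the lower bound. The hard part will therefore be verifying this cancellation rigorously and tracking the coordinate bookkeeping for $\bar T_\theta$ in the basis adapted to $(\omega_\theta,1)$; the quantitative phase estimates in the final step are routine once the cancellation is in place.
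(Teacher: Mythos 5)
Your proposal is correct, but it takes a genuinely different route from the paper's proof. The paper argues indirectly: after observing that the $t$-slice $T^\lambda f_\theta(\cdot,t)$ is, up to a unimodular modulation absorbed into the amplitude, a Fourier transform of a compactly supported bump, it combines Plancherel ($\|T^\lambda f_\theta\|_{L^2(T_{\theta,t})}^2\sim \|f_\theta\|_2^2\sim \lambda^{-(n-2)/2}$) with the trivial $L^\infty$ bound $|T^\lambda f_\theta|\le\lambda^{-(n-2)/2}$; a pigeonholing then forces the set of points where \eqref{lower bound example} holds to have measure $\gtrsim\lambda^{(n-2)/2}$ on each slice. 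The set $S$ is therefore not constructed explicitly. Your argument instead exhibits $S$ directly by expanding the phase in the moving frame $(u,x_n)$. The exact cancellation you flag is real and routine to verify: with $\xi=s(\omega_\theta+\lambda^{-1/2}\tilde\eta,1)$ and $x'=\gamma^\lambda_\theta(u,x_n)$, the two terms $\frac{\lambda}{2}\langle \bm{A}(x_n/\lambda)\omega_\theta,\omega_\theta\rangle$ and $\lambda^{1/2}\langle \bm{A}(x_n/\lambda)\omega_\theta,\tilde\eta\rangle$ in the quadratic expansion of $\frac{\lambda}{2}\langle\bm{A}(x_n/\lambda)\omega,\omega\rangle$ are killed respectively by the translation components of $\gamma^\lambda_\theta$, leaving only $\langle u-\lambda v_\theta,(\omega_\theta,1)\rangle+\lambda^{-1/2}\langle u'-\lambda v_\theta',\tilde\eta\rangle+\tfrac12\langle\bm{A}(x_n/\lambda)\tilde\eta,\tilde\eta\rangle$, and $\bm{A}(0)=0$ controls the last term on $|x_n|\le\rho_0\lambda$. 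Both approaches are valid. The paper's is shorter and more robust in that it never opens up the specific algebraic form of the phase; yours is more constructive, makes the wave-packet duality geometrically transparent, and even yields the slightly cleaner bound $\gtrsim\lambda^{-(n-2)/2}$ without the $\lambda^{-O(\delta)}$ loss. One small point you should keep in mind (which the paper also glosses over): you need $a^\lambda$ to be bounded below on $S$, so the argument really requires restricting to $\theta$ with $\omega_\theta$ in a slightly smaller ball than $B^{n-2}_{4/5}(0)$, which loses only a harmless constant in the final counterexample.
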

\begin{proof}
Fix $t \in [-\lambda/2,\lambda/2]$ and define $T_{\theta,t} := T_\theta \cap \{x_n=t\}$. It suffices to show that there exists $S_t \subset T_{\theta,t}$ with $|S_t| \gtrsim \lambda^{\frac{n-2}{2}}$ and \eqref{lower bound example} holds for all $x \in S_t$. Since
\begin{equation*}
T^\lambda f_\theta(x',t) = \int e^{-2\pi i \langle x',\xi \rangle} e^{-\frac{\pi i}{\xi_{n-1}}\langle \bm{A}(t/\lambda)\xi', \xi'\rangle} a^\lambda(x',t,\xi)f_\theta(\xi)d\xi = \hat{f_{\theta,x',t}}(x')
\end{equation*}
where
\begin{equation*}
f_{\theta,x',t}(\xi) =e^{-\frac{\pi i}{\xi_{n-1}}\langle \bm{A}(t/\lambda)\xi', \xi'\rangle} a^\lambda(x',t,\xi) f_\theta(\xi)
\end{equation*}
Since $T^\lambda f_\theta$ is essentially supported in $T_\theta$ (see Lemma \ref{lemsuppT}), the Plancherel theorem gives
\begin{equation} \label{plancherel 1}
\|T^\lambda f_\theta\|^2_{L^2(T_{\theta,t})} \sim \|f_\theta\|^2_{L^2(\mathbb{R}^{n-1})} \sim \lambda^{-\frac{n-2}{2}}
\end{equation}
Also,
\begin{equation}\label{upper bound example}
|T^\lambda f_\theta| \leq  \int f_\theta \leq \lambda^{-\frac{n-2}{2}}
\end{equation}
Let $S_t$ be the set of all points in $T_{\theta,t}$ satisfying \eqref{lower bound example}. By \eqref{plancherel 1} and \eqref{upper bound example},
\begin{equation*}
\lambda^{-\frac{n-2}{2}} \lesssim \|T^\lambda f_\theta\|^2_{L^2(T_{\theta,t})} \lesssim |S_t| \lambda^{-(n-2)}
\end{equation*}
Thus, $|S_t| \gtrsim \lambda^{\frac{n-2}{2}}$.
\end{proof}
\medskip
By a slight abuse of notation, we continue to use $T_\theta$ to denote this significant portion. Thus,
\begin{equation}
T^\lambda f_\theta \gtrsim \lambda^{-O(\delta)-\frac{n-2}{2}} \chi_{T_\theta}
\end{equation}

Next, we show that $\bigcup_{\theta:\lambda^{-1/2}\mathrm{-cap}} T_\theta$ is contained in a neighborhood of a low-dimensional set, illustrating the Kakeya compression phenomenon. 
\begin{proposition} \label{KakeyaCompress}
There exists an algebraic variety $Z$ of dimension $m:=n-\lfloor\frac{n-2}{2}\rfloor$ and degree $O_n(1)$ such that
\begin{equation} \label{kacompress}
\bigcup_{\theta:\lambda^{-1/2}\mathrm{-cap}} T_\theta \subset N_{\lambda^{1/2+\delta}}(Z) \cap B^n_\lambda(0)
\end{equation}
\end{proposition}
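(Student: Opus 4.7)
The plan is to exhibit $Z$ explicitly as a complete intersection of $\lfloor(n-2)/2\rfloor$ quadrics adapted to the block structure of $\bm{A}(t)$. Concretely, I would set
\[
P_j(z_1,\dots,z_n) := \lambda\, z_{2j} - z_n\, z_{2j-1}, \qquad 1\le j\le \lfloor(n-2)/2\rfloor,
\]
and define $Z := \{z\in\mathbb{R}^n : P_j(z)=0 \text{ for all } j\}$. Because the $P_j$ involve pairwise disjoint variable pairs $\{z_{2j-1},z_{2j}\}$ together with the single common variable $z_n$, and each $P_j$ has non-vanishing $\partial_{z_{2j}}$-derivative equal to $\lambda$, the equations are independent. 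Hence $\dim Z = n - \lfloor(n-2)/2\rfloor = m$ and $\deg Z \leq 2^{\lfloor(n-2)/2\rfloor} = O_n(1)$.

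The central computation is to verify that the curve $x_n \mapsto (\gamma^\lambda_\theta(\lambda v_\theta, x_n), x_n)$ at the core of each tube $T_\theta$ lies in $Z$. Setting $t := x_n/\lambda$ and using the anti-diagonal choice $v_{\theta,2j-1}=-\omega_{\theta,2j}$, $v_{\theta,2j}=0$, direct expansion of the $j$-th $2\times 2$ block gives
\[
z_{2j-1}|_{\text{center}} = -\lambda\bigl(t\omega_{\theta,2j-1} + (1+t^2)\omega_{\theta,2j}\bigr), \qquad z_{2j}|_{\text{center}} = -\lambda\bigl(t^2\omega_{\theta,2j-1} + (t+t^3)\omega_{\theta,2j}\bigr).
\]
The algebraic identity
\[
t^2\omega_{\theta,2j-1} + (t+t^3)\omega_{\theta,2j} = t\bigl(t\omega_{\theta,2j-1} + (1+t^2)\omega_{\theta,2j}\bigr),
\]
which is precisely the reason the $(2,2)$-entry of each block was designed as $t+t^3$ rather than $t$, then yields $z_{2j}|_{\text{center}} = t\cdot z_{2j-1}|_{\text{center}}$, i.e.\ $P_j$ vanishes along the central curve.

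Finally, for a general point $(z,x_n)\in T_\theta$ I would write $u = \lambda v_\theta + w$ with $w \in \lambda^\delta \bar{T}_\theta - \lambda v_\theta$, so that each coordinate satisfies $|w_i|\lesssim \lambda^{1/2+\delta}$. Since $u\mapsto \gamma^\lambda_\theta(u,x_n)$ is an affine translation in $u$, the deviation of $z$ from its central value is exactly $w$, and hence
\[
P_j(z) = \lambda w_{2j} - x_n w_{2j-1}, \qquad |P_j(z)|\lesssim \lambda^{3/2+\delta}.
\]
Adjusting the $z_{2j}$-coordinate alone by $P_j(z)/\lambda$, which changes no other $P_{j'}$, maps $z$ into $Z$ with a total displacement of size $\lesssim \lambda^{1/2+\delta}$, giving \eqref{kacompress}. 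The only non-routine ingredient is the algebraic cancellation identifying the central curve with a subvariety of $Z$; once this is in place, the tube-neighbourhood statement follows because the gradients of the defining polynomials have size $\gtrsim \lambda$ everywhere on $B^n_\lambda(0)$.
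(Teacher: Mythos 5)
Your proof is correct and takes essentially the same approach as the paper: the paper defines $Z$ via the same polynomials $P_j = \lambda x_{2j} - x_{2j-1}x_n$, asserts by ``a direct computation'' that the central curve $c(T_\theta)$ lies in $Z$, and then observes $T_\theta \subset N_{\lambda^{1/2+\delta}}(c(T_\theta))$. You have simply carried out the direct computation (the cancellation $z_{2j}|_{\mathrm{center}} = t\,z_{2j-1}|_{\mathrm{center}}$) and the dimension/degree/neighborhood checks explicitly, all of which are correct.
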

\begin{proof}
Let 
\begin{equation*}
c(T_\theta) := \{(\gamma^\lambda_\theta(\lambda v_\theta,x_n),x_n):,\quad -\lambda \leq x_n \leq \lambda\} \cap B^n_\lambda(0)
\end{equation*}
denote the center of $T_\theta$. A direct computation gives that $c(T_\theta)$ lies in the common zero set $Z$ of the polynomials
\begin{equation*}
P_j:=\lambda x_{2j} - x_{2j-1}x_n \qquad \mathrm{for} \quad 1 \leq j \leq \lfloor\frac{n-2}{2}\rfloor
\end{equation*}
for all $\theta$. Since $T_\theta \subset N_{\lambda^{1/2+\delta}}(c(T_\theta))$, it follows that \eqref{kacompress} holds.
\end{proof}
\medskip
We now construct a function $f$ that causes estimate \eqref{linear estimate} to fail whenever $p$ lies in the range\eqref{necessary range}. 
\begin{proposition}
There exists a map $\epsilon: \{\theta\} \mapsto \{-1,1\}$ such that
\begin{equation*}
f := \sum_{\theta:\lambda^{-1/2}\mathrm{-cap}} \epsilon(\theta)f_\theta
\end{equation*}
forces estimate \eqref{linear estimate} to fail whenever $p$ lies within the range\eqref{necessary range}.
\end{proposition}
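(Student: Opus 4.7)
The plan is to use the probabilistic method with Rademacher signs $\epsilon(\theta)\in\{-1,+1\}$, combining Khintchine's inequality with the geometric inputs of Propositions \ref{lower bound} and \ref{KakeyaCompress}. Since the $f_\theta$ are supported on the essentially disjoint caps $\theta$ in the variable $\xi'/\xi_{n-1}$ and are uniformly bounded, the upper bound $\|f\|_{L^p}\lesssim 1$ is automatic for every choice of $\epsilon$. On the output side, I would apply Khintchine pointwise in $x$ to obtain
\begin{equation*}
\mathbb{E}_\epsilon \|T^\lambda f\|_{L^p(\mathbb{R}^n)}^p \;\sim_p\; \int_{\mathbb{R}^n}\Bigl(\sum_\theta |T^\lambda f_\theta(x)|^2\Bigr)^{p/2}\,dx,
\end{equation*}
so by pigeonhole there exists $\epsilon$ for which $\|T^\lambda f\|_{L^p}^p$ is at least the right-hand side. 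The problem thus reduces to a lower bound on the square-function integral.

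For this lower bound, I would plug in Proposition \ref{lower bound} to replace $\sum_\theta|T^\lambda f_\theta|^2$ by $\lambda^{-(n-2)-O(\delta)}\mu$, where $\mu:=\sum_\theta \chi_{S_\theta}$ and $S_\theta\subset T_\theta$ satisfies $|S_\theta|\gtrsim \lambda^{n/2}$. Two pieces of information on $\mu$ remain: the total mass $\|\mu\|_{L^1}\gtrsim \lambda^{(n-2)/2}\cdot\lambda^{n/2}=\lambda^{n-1}$ from counting caps, and, by Proposition \ref{KakeyaCompress}, $\supp\mu\subset \Omega := N_{\lambda^{1/2+\delta}}(Z)\cap B^n_\lambda(0)$, which has measure $\lesssim \lambda^{(n+m)/2 + O(\delta)}$ because $Z$ is a variety of dimension $m = n-\lfloor(n-2)/2\rfloor$ and degree $O_n(1)$. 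An application of Hölder's inequality on $\Omega$ then gives
\begin{equation*}
\int_\Omega \mu^{p/2}\,dx \;\geq\; \frac{\|\mu\|_{L^1}^{p/2}}{|\Omega|^{(p-2)/2}} \;\gtrsim\; \lambda^{(n-1)p/2 - (n+m)(p-2)/4 - O(\delta)}.
\end{equation*}

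Combining the two steps, $\|T^\lambda f\|_{L^p}^p \gtrsim \lambda^{\,p/2 - (n+m)(p-2)/4 - O(\delta)}$, and since $\|f\|_{L^p}\lesssim 1$ the ratio $\|T^\lambda f\|_{L^p}/\|f\|_{L^p}$ therefore diverges as $\lambda\to\infty$ whenever $2p>(n+m)(p-2)$, i.e.\ $p<2(n+m)/(n+m-2)$. Substituting $m=(n+3)/2$ for $n$ odd and $m=(n+2)/2$ for $n$ even recovers the thresholds $2+8/(3n-1)$ and $2+8/(3n-2)$ respectively, matching \eqref{necessary range}. The only real difficulty is careful bookkeeping: one has to choose $\delta$ small enough that the accumulated $\lambda^{O(\delta)}$-losses from both propositions do not absorb the main gain, which is guaranteed by the parameter hierarchy already fixed (with $\delta$ much smaller than $\varepsilon$), and to verify that the same portions $S_\theta$ can be used for both inputs, which is immediate since $S_\theta\subset T_\theta\subset\Omega$.
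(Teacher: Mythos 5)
Your proposal is correct and follows essentially the same route as the paper: Rademacher signs plus Khintchine to reduce to the square function, Proposition \ref{lower bound} to lower-bound the square function by a sum of indicators of size $\lambda^{-(n-2)-O(\delta)}$, and H\"older against the small measure of $N_{\lambda^{1/2+\delta}}(Z)$ from Proposition \ref{KakeyaCompress}. The only cosmetic difference is that the paper applies Khintchine pointwise to $\mathbb{E}[|T^\lambda f(x)|]$ and then uses Minkowski's integral inequality, while you apply Khintchine directly to $\mathbb{E}[\|T^\lambda f\|_{L^p}^p]$ and average, and correspondingly you run H\"older between $L^1$ and $L^{p/2}$ on $\mu=\sum_\theta\chi_{S_\theta}$ where the paper runs it between $L^2$ and $L^p$ on $\bigl(\sum_\theta\chi_{T_\theta}\bigr)^{1/2}$; these are the same inequality and give identical exponents.
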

\begin{proof}
Let $\epsilon_\theta$ be uniformly distributed independent random signs and
\begin{equation*}
    f := \sum_{\theta:\lambda^{-1/2}\mathrm{-cap}} \epsilon_\theta f_\theta
\end{equation*}
For $x \in B_{\lambda}^n(0)$ Khintchine’s theorem (see \cite{Yang}, page 44) gives,
\begin{equation} \label{2.6}
    \mathbb{E}[|T^\lambda f(x)|] \sim (\sum_{\theta:\lambda^{-1/2}\mathrm{-cap}} |T^\lambda f_\theta(x)|^2)^{1/2} \gtrsim \lambda^{-O(\delta)-\frac{n-2}{2}} (\sum_{\theta:\lambda^{-1/2}\mathrm{-cap}} \chi_{T_\theta})^{1/2}
\end{equation}
Let $p > 2$. 
By H\"older inequality and Proposition \ref{lower bound},
\begin{multline}\label{2.7}
    \| (\sum_{\theta:\lambda^{-1/2}\mathrm{-cap}} \chi_{T_\theta})^{1/2} \|_{L^p(\mathbb{R}^n)} |\bigcup_{\theta:\lambda^{-1/2}\mathrm{-cap}}T_\theta|^{1/2-1/p}\geq \\ \| (\sum_{\theta:\lambda^{-1/2}\mathrm{-cap}} \chi_{T_\theta})^{1/2} \|_{L^2(\mathbb{R}^n)} \gtrsim \lambda^{\frac{n-1}{2}}
\end{multline}
By Proposition \ref{KakeyaCompress},
\begin{equation} \label{2.8}
    |\bigcup_{\theta:\lambda^{-1/2}\mathrm{-cap}}T_\theta|^{1/2-1/p} \lesssim \lambda^{\frac{n+m}{2}(\frac{1}{2}-\frac{1}{p})+O(\delta)}
\end{equation}
By Minkowski’s inequality, \eqref{2.7} and \eqref{2.8},
\begin{equation*}
    \mathbb{E}[\|T^\lambda f\|_{L^p(\mathbb{R}^n)}] \geq \| \mathbb{E}[|T^\lambda f|] \|_{L^p(\mathbb{R}^n)} \gtrsim \lambda^{-O(\delta)+\frac{1}{2}-\frac{n+m}{2}(\frac{1}{2}-\frac{1}{p})}
\end{equation*}
Thus, there exists a map $\epsilon: \{\theta\} \mapsto \{-1,1\}$ such that $f := \sum_{\theta:\lambda^{-1/2}\mathrm{-cap}} \epsilon(\theta)f_\theta $ satisfies
\begin{equation} \label{2.9}
    \|T^\lambda f\|_{L^p(\mathbb{R}^n)} \gtrsim \lambda^{-O(\delta)+\frac{1}{2}-\frac{n+m}{2}(\frac{1}{2}-\frac{1}{p})}
\end{equation}
Suppose $r$ satisfies \eqref{rrange} and the estimate \eqref{linear estimate}) holds. Since $\|f\|_{L^r(\mathbb{R}^{n-1})} \sim 1$, \eqref{2.9} gives
\[\lambda^{-O(\delta)+\frac{1}{2}-\frac{n+m}{2}(\frac{1}{2}-\frac{1}{p})} \lesssim 1\]
which is equivalent to
\[p \geq 2+\frac{4}{n+m-2}\]
\end{proof}
\medskip
This gives Proposition \ref{necessary range}.

\bigskip

\section{Preliminaries}\label{section-preliminary}

In this section, we introduce the notation and basic properties for oscillatory integral operators satisfying the cinematic curvature condition. Throughout the remainder of the paper, we write $x=(x',x_n)\in \R^n$ and $\xi=(\xi',\xi_{n-1})\in\R^{n-1}$.

\subsection{Some Basic Reductions}\label{subsecquant}
One can reduce Proposition \ref{epsilon main theorem} to the case  $1 \le R \le \lambda^{1-\varepsilon}$ and $f \in \mathcal{S}(\mathbb{R}^{n-1})$ with $\supp f \subset \mathbb{A}$. Moreover, after an appropriate rescaling, we may further assume that $B^n_R=B^n_R(0)$ and
\begin{equation} \label{assumpetion 1}
    \|\partial_{x,\xi}^\beta a\|_{L^\infty(B^n_1(0) \times \mathbb{A})} \lesssim_\beta 1\qquad \textrm{for all $1 \leq |\beta| \leq N$.} 
\end{equation}
where $N=N_{\e,M,p}\in\N$ is some large number depending only on the dimension $n$ and the fixed choice of $\e,M$ and $p$. Hence, to prove Proposition \ref{epsilon main theorem},it suffices to show that:
\begin{proposition} \label{reduction 1}
Suppose $T^{\lambda}$ is an oscillatory integral operator satisfying the cinematic curvature condition. Then the estimate
\begin{equation*}
    \|T^{\lambda}f\|_{L^p(B^n_R(0))} \lesssim_{\phi,\varepsilon,p} R^\varepsilon \|f\|_{L^{r}(B^{n-1})}
\end{equation*}
holds uniformly for $f \in \mathcal{S}(\mathbb{R}^{n-1})$ with $\supp f \subset \mathbb{A}$, $1 \leq R \leq \lambda^{1-\varepsilon}$ and $\varepsilon > 0$, provided that the amplitude $a$ satisfies \eqref{assumpetion 1} and the exponents $p,r$ satisfy \eqref{range of result} and \eqref{rrange}.
\end{proposition}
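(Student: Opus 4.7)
The plan is to prove Proposition \ref{reduction 1} by the strategy announced in the introduction: a broad/narrow induction on scales, combined with Wolff's two-ends reduction and refined decoupling inequalities adapted to the cinematic curvature setting. The overall architecture parallels the recent work of Gan-Wu on local smoothing and Wang-Wu on Fourier restriction, and the parameter choices ($K$, $K_\circ$, $\kappa$, $\delta$) declared in the preliminaries are exactly those needed to orchestrate this induction.

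First I would set up a broad/narrow decomposition at an intermediate scale $K$. Decompose the frequency support $\mathbb{A}$ into $K^{-1}$-caps $\tau$, write $f = \sum_\tau f_\tau$, and partition $B^n_R$ into $K$-balls $B^n_K(y)$. Each such ball is classified as \emph{narrow} (if $T^\lambda f$ on $B^n_K(y)$ is captured by caps $\tau$ clustered inside a common codimension-one subspace under the Gauss map $G$ from (H3)) or \emph{broad} (otherwise). Narrow balls are absorbed by applying Proposition \ref{reduction 1} inductively at a smaller scale, after a parabolic rescaling adapted to the cinematic curvature condition; the rescaling preserves the hypotheses (H1)–(H3) and reduces the effective radius by a factor of $K$. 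The broad balls are the main object of study.

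For broad balls I would perform a wave packet decomposition at the canonical scale $R^{1/2}$: partition $\xi$-space into $R^{-1/2}$-plates $\theta$ and write $T^\lambda f = \sum_T \psi_T$, where each $\psi_T$ is essentially supported on a curved tube $T$ of dimensions $R \times R^{1/2} \times \dots \times R^{1/2}$ adapted to the cinematic surface. Next I apply Wolff's two-ends reduction (Section \ref{section-algorithm}): by pigeonholing on the scale and location of concentration, one may assume that on every significant ball $B \subset B^n_R$ the contributing wave packets are not concentrated in any single small sub-ball of $B$. This condition precisely rules out the Kakeya compression phenomenon of Section \ref{necessary condition}. Under the two-ends hypothesis I would invoke a refined decoupling inequality for the cinematic surface, bounding the broad contribution by an $\ell^p$ sum of wave packets with a factor controlled by the \emph{effective} number of significant tubes rather than their total count.

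Finally I combine the broad and narrow estimates, iterate the recursion across dyadic scales, and solve for the optimal exponent; the balance between the refined decoupling gain in the broad case and the induction loss from the narrow case will pin down $p = 2 + 8/(3n-5)$. The principal obstacle will be formulating and proving the refined decoupling inequality for \emph{variable-coefficient} cinematic surfaces: constant-coefficient cone decoupling does not apply directly, since the surface depends on $x$, and, as the introduction notes, the transverse equidistribution estimate central to the polynomial partitioning approach of Ou-Wang and Wang fails in this setting. Executing the two-ends reduction in a manner compatible with variable-coefficient wave packets is the second delicate point, and is what Section \ref{section-algorithm} is devoted to.
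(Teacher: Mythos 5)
Your proposal captures the high-level architecture of the paper's argument — a broad/narrow split, wave packet decomposition into cone-adapted planks, a two-ends reduction, and refined decoupling — but it omits or misdescribes several load-bearing components, so as written the plan would not close.

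The most significant omission is the mixed-norm reduction. The paper does not induct directly on the $L^p$ estimate of Proposition \ref{reduction 1}; instead it first passes, via real interpolation, to the stronger Proposition \ref{reduction 2}, in which the right-hand side is
$\|f\|_{2}^{2/p}\max_{\theta}\|f\|_{L^2_{\mathrm{avg}}(3\tilde\theta)}^{1-2/p}$.
This quantity is crucial: Lemma \ref{norm_lower_bound} converts lower bounds on wave-packet $L^2$ masses into lower bounds on $\max_\theta\|f\|_{L^2_{\mathrm{avg}}(3\tilde\theta)}$, Lemma \ref{norm_l2_local} shows the quantity is stable under pruning, and Lemma \ref{rescaling} shows how it scales under Lorentz rescaling. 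Without this reformulation, the narrow-case induction and the final bookkeeping of the broad case (see \eqref{final_form_lq}) do not assemble into a closed loop. Your plan inducts on $\|f\|_{L^p}$, which is precisely the naive approach the paper abandons after Proposition \ref{main theorem type 1}.

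Second, your treatment of the broad contribution is incomplete. The paper does not merely apply refined decoupling to the broad balls; it interpolates (via Lemma \ref{lemholder}) the $\mathrm{BL}^p_A$ norm between a $\mathrm{BL}^2$ estimate and a $\mathrm{BL}^q$ estimate with $q=2+4/(n-2)$. The $\mathrm{BL}^q$ piece is controlled by the refined decoupling theorem (Theorem \ref{refdecthm}), but the $\mathrm{BL}^2$ piece is controlled by a wholly separate ingredient — the local $L^2$ estimate on $R^{1/2}$-cubes (Corollary \ref{l2 estimate}), fed through the cube-count parameter $l$ produced by Algorithm 1. The exponent $p=2+8/(3n-5)$ emerges precisely from choosing the interpolation weight $\alpha=(n-1)/(3n-1)$ to make $1/p=\alpha/2+(1-\alpha)/q$ and then balancing against the two-ends gain of Lemma \ref{forfinal}, namely $\mu(\#\mathbb{T})^{-1/2}l^{1/2}\lesssim R^{\varepsilon^2}$. "Solve for the optimal exponent" hides the entire mechanism.

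Third, some factual inaccuracies. The rescaling adapted to the cinematic curvature condition is a Lorentz rescaling $D_K(x'',x_{n-1},x_n)=(Kx'',x_{n-1},K^2x_n)$ (Section \ref{Lorentz-rescaling-section}), not a parabolic one; the wave packets are curved planks with cross-sections of dimensions $R^{2\delta}\times R^{1/2+\delta}\times\cdots\times R^{1/2+\delta}$ (one direction of order one), not tubes of dimensions $R\times R^{1/2}\times\cdots\times R^{1/2}$; and the broad/narrow classification uses the $\mathrm{BL}^p_A$ broad norm of \eqref{defbroadnorm}, with $A$ a finite set of directions. Finally, you treat the variable-coefficient refined decoupling inequality as the principal open obstacle, but the paper simply invokes Theorem A.3 of \cite{GW}, which was already established for exactly this class of operators; the genuine novelties here lie elsewhere (the two-ends bookkeeping of Section \ref{section-algorithm} and the counterexample of Section \ref{necessary condition}), and also the logical order is reversed from what you propose — the two-ends reduction precedes the broad/narrow split, not the other way around.
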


To obtain uniform estimates for a family of oscillatory integral operators satisfying the cone condition, it is necessary to strengthen Proposition \ref{reduction 1} This strengthening is essential for implementing an induction-on-scales argument, since a single operator $T^\lambda$ is not stable under rescaling. Nevertheless, the rescaling procedure preserves certain structural features of this class of operators, allowing one to carry out the induction simultaneously over a larger family. In what follows, we work within the framework introduced in \cite[Section 2.3]{BHS}.

Fix a small constant $\cpar>0$ be a small fixed constant. For $\bA=(A_1,A_2,A_3)\in[1,\infty)^3$, we impose the following conditions on the phase function:

\begin{enumerate}
    \item[$(\text{H1}_{\bA})$]  $\|\partial^2_{\xi x'}\phi(x;\xi)-I_{n-1}\|_{\mathrm{op}}\le \cpar A_1$ for all $(x;\xi)\in B^n_1(0)\times \mathbb{A}$.
    \item[$(\text{H2}_{\bA})$] $\|\partial^2_{\xi'\xi'}\partial_{x_n}\phi(x;\xi)-\frac{1}{\xi_{n-1}}I_{n-2}\|_{\mathrm{op}}\le \cpar A_2$ for all $(x;\xi)\in B^n_1(0)\times \mathbb{A}$.
    \item[$(\textup{D1}_\bA)$]$\|\partial_\xi^\beta\partial_{x_k}\phi\|_{L^\infty(B_1^n(0)\times \mathbb{A})}\le \cpar A_1$ for all $1\le k\le n-1$ and $\beta\in\N_0^{n-1}$ with $2\le|\beta|\le 3$ satisfying $|\beta'|\ge 2$;
    
    \noindent $\|\partial_{\xi'}^{\beta'}\partial_{x_k}\phi\|_{L^\infty(B_1^n(0)\times \mathbb{A})}\le \frac{\cpar}{2n} A_1$ for all $1\le k\le n-1$ and $\beta'\in\N_0^{n-2}$ with $|\beta'|=3$. 
    \item[$(\textup{D2}_\bA)$] $
        \|\partial_\xi^\beta\partial_z^\al\phi\|_{L^\infty(B_1^n(0)\times \mathbb{A})}\le \frac{\cpar}{2n}A_3 $
    for all $(\al,\beta)\in\N_0^n\times \N_0^{n-1}$ with $2\le |\al|\le 4N$ and $1\le |\beta|\le 4N+2$ satisfying $1\le |\beta|\le 4N$ or $|\beta'|\ge 2$. 
    \item[$(\textup{M}_\bA)$] $\dist(\supp_x a, \R^n\setminus B_1^n)\ge \frac{1}{4A_3}$.
 
\end{enumerate}

\medskip

A pair $(\phi,a)$ is said to be of \textit{type} $\bA$ if it satisfies the conditions $(\textup{H1}_\bA),(\textup{H2}_\bA),\\(\textup{D1}_\bA),(\textup{D2}_\bA)$ and $(\textup{M}_\bA)$ (in addition to (\textbf{H1}) and (\textbf{H2})) is said to be of \textit{type} $\bA$. By decomposing the amplitude function and applying suitable rescaling arguments, one may reduce to the normalized case $\bA=\mathbf{1}:=(1,1,1)$; see \cite[Section 2.5]{BHS} for details.

In the remainder of this paper, we assume that the operator $T^\lambda$ is of type \textbf{1}. Under this normalization, Proposition \ref{reduction 1} admits the following strengthened formulation.

\begin{proposition}\label{main theorem type 1}
Suppose $T^{\lambda}$ is an oscillatory integral operator satisfying the cinematic curvature condition of type $\bfone$. Assume \eqref{assumpetion 1}, then for all $p$ satisfies \eqref{range of result} and $r$ satisfies \eqref{rrange}, the estimate
\begin{equation*}
    \|T^{\lambda}f\|_{L^p(B_R^n(0))} \lesssim_{\varepsilon,p,\phi}R^\varepsilon \|f\|_{L^{r}(\mathbb{R}^{n-1})}
\end{equation*}
holds uniformly for $f \in \mathcal{S}(\mathbb{R}^{n-1})$ with $\supp f \subset \mathbb{A}$, $1 \leq R \leq \lambda^{1-\varepsilon}$ and $\varepsilon > 0$.
\end{proposition}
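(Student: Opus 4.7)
The plan is to prove Proposition \ref{main theorem type 1} by an induction on the scale $R$, combining a wave packet decomposition, a narrow--broad dichotomy at the intermediate scale $K$, and Wolff's two-ends reduction coupled with a refined decoupling inequality on the broad part. The overall scheme closely follows Gan--Wu \cite{GW} and Wang--Wu \cite{WW}, but with the geometry of the cone/cinematic curvature replacing that of the paraboloid.

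First I would decompose $f=\sum_\theta f_\theta$ with respect to a finitely overlapping cover of $\mathbb{A}$ by caps $\theta$ of diameter $\sim R^{-1/2}$, so that each $T^\lambda f_\theta$ is essentially concentrated on a curved plate $T_\theta$ of dimensions $\sim R^{1/2}\times\cdots\times R^{1/2}\times R$, whose direction is encoded by the Gauss map of \textbf{(H3)}. The normalization of $(\phi,a)$ to type $\bfone$ ensures that the usual stationary phase/integration by parts arguments on wave packets go through uniformly, and \textbf{(H2)} gives the expected transversality between plates of distinct caps.

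Next I would partition $B^n_R(0)$ into balls $B_K$ of radius $K=R^{\varepsilon^{50}}$, and on each $B_K$ classify the caps $\theta$ whose plates meet $B_K$ as either \emph{narrow} (concentrated near a hyperplane through $B_K$) or \emph{broad} (containing $\sim K^{n-1}$-many caps whose Gauss normals are quantitatively transverse in the sense of \textbf{(H3)}). The narrow contribution would be dominated by finitely many rescaled copies of $T^\lambda$ at scale $R/K$, which feed into the inductive hypothesis with an acceptable loss of $K^{O(\varepsilon)}$. For the broad contribution, after a pigeonhole on the plate multiplicities I would apply Wolff's two-ends reduction (as set up in Section \ref{section-algorithm}) to pass to a subset where each plate carries nontrivial mass on two well-separated ends; this enforces the quantitative non-concentration needed to invoke the refined decoupling inequality for phases satisfying the cinematic curvature condition. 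The refined decoupling replaces the trivial $\ell^2$-orthogonality by a gain of a power of the number of broad caps, and a careful balance between this gain, the $L^p$-multiplicity bound on the broad set, and the inductive exponent yields the exponent $p>2+\tfrac{8}{3n-5}$.

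The main obstacle I expect is exactly the point flagged in the introduction: the polynomial partitioning framework fails here because transverse equidistribution is not available under only \textbf{(H1)}--\textbf{(H3)}, so the two-ends mechanism must do all the work that equidistribution would normally do in the restriction setting. Concretely, one must verify that the two-ends reduction, once combined with the broad condition and the type $\bfone$ normalization, really furnishes a quantitative non-concentration hypothesis of the form required by the refined decoupling of Bourgain--Demeter (adapted to cones/cinematic curvature); this involves controlling how the plates $T_\theta$ cluster within the broad set and propagating the induction through the rescaling that turns $(\phi,a)$ of type $\bfone$ at scale $\lambda$ into a new $(\phi',a')$ of type $\bfone$ at scale $\lambda/K$ without losing the constants. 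Once these geometric and book-keeping issues are resolved, the remaining calculation reduces to tracking exponents across the narrow/broad dichotomy and closing the induction within the stated range of $p$.
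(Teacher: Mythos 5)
Your outline captures the right overall architecture — wave packet decomposition at scale $R^{-1/2}$, a narrow–broad dichotomy at an intermediate cap scale $K^{-1}$, Lorentz rescaling for the narrow part, and Wolff's two-ends reduction followed by refined decoupling for the broad part — and this does match the paper's strategy. However, there are two structural points you omit that are essential to closing the induction, and one point of bookkeeping worth flagging.

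First, the paper does not induct directly on the $L^p\to L^p$ estimate. It first reduces Proposition \ref{main theorem type 1} to the \emph{mixed-norm} estimate (Proposition \ref{reduction 2}),
\[
\|T^{\lambda}f\|_{L^p(B_R^n(0))} \lesssim_{\varepsilon,p,\phi}R^\varepsilon \|f\|_{L^{2}(\mathbb{R}^{n-1})}^{\frac{2}{p}} \max_{\theta \in \Theta_{R^{-1/2}}}\|f\|_{L^{2}_\mathrm{avg}(3\tilde{\theta})}^{1-\frac{2}{p}},
\]
by a real interpolation of strict type. This replacement is what makes the Lorentz rescaling (Lemma \ref{rescaling}) come with a genuine gain of $K^{2-(n-2)(p-2)}$, and it is this factor (favorable once $p>2+\tfrac{2}{n-2}$) that allows the narrow contribution to be absorbed into the inductive hypothesis. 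Without switching to this quantity, your proposed narrow step — "feed into the inductive hypothesis with an acceptable loss of $K^{O(\varepsilon)}$" — does not actually close, since the $L^p$-norms of the narrow pieces need not add up acceptably.

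Second, your sketch of the broad case stops at "apply refined decoupling," but the critical exponent $p=2+\tfrac{8}{3n-5}$ arises from interpolating (via the broad-norm H\"older inequality, Lemma \ref{lemholder}) between a local $L^2$ estimate on $R^{1/2}$-cubes (Corollary \ref{l2 estimate}, giving a factor $l R^{1/2}$ from the tube–cube incidence count) and the refined $L^q$ decoupling at $q=2+\tfrac{4}{n-2}$ (Theorem \ref{refdecthm}), with interpolation weight $\alpha=\tfrac{n-1}{3n-1}$. The gain supplied by the two-ends reduction is encoded precisely in the inequality $\mu(\#\mathbb{T})^{-1/2}l^{1/2}\lesssim R^{\varepsilon^2}$ from Lemma \ref{forfinal}; without tracking the parameters $\mu$, $l$, $\beta$, $m$ produced by Algorithms 1 and 2, you cannot determine the final range of $p$.

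Finally, a minor point on ordering and scales: the paper performs the two-ends reduction (splitting into one-end $\beta\le 100$ vs.\ two-ends $\beta>100$ using $R/K_\circ$-horizontal regions) \emph{before} the narrow–broad split, and the broad norm lives on $K^2$-balls while narrow caps have diameter $K^{-1}$; the Lorentz rescaling passes from scale $R$ to $R/K^2$, not $R/K$. Your proposed ordering (narrow–broad first, then two-ends only on the broad part) is not what the paper does and it is unclear whether the bookkeeping of Lemma \ref{forfinal} survives that reordering; the one-end case in the paper is handled by a direct parabolic-rescaling induction on $R/K_\circ$-balls, not by the narrow mechanism.
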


\bigskip

\subsection{Wave packet decomposition}

Fix $0<\e \ll 1$. In this subsection, we recall the wave packet decomposition for a function $f \in \mathcal{S}(\mathbb{R}^{n-1})$ at scale $R$ satisfying $1 \ll R \leq \la^{1-\e}$, following the presentation in \cite[Section 3.3]{GW}.

Condition \textbf{(H1)} guarantees that the mapping
$\partial_\xi \phi(\cdot, x_n; \xi)$ is locally invertible. More precisely, for each
fixed pair $(x_n,\xi)$, the map $x' \mapsto \partial_\xi \phi(x;\xi)$ has a
non-vanishing Jacobian determinant. After a suitable rescaling, we may further assume
that
\begin{equation} \label{assumption2}
\partial_\xi \phi(\cdot, x_n; \xi) \quad \text{defines a diffeomorphism on}\quad
B_1^{n-1}(0) \quad \text{for all} (x_n,\xi) \in [-1,1] \times \mathbb{A}. 
\end{equation}
Consequently, this
mapping admits a well-defined inverse, which we introduce below.

\begin{definition} \cite[Definition 3.3]{GW}
    For $(x_n,\xi)\in [-1,1]\times \mathbb{A}$, let $\Phi(u,x_n;\xi)$ be the unique solution to
    \begin{equation}\label{defPhi}
        \partial_\xi\phi(\Phi(u,x_n;\xi),x_n;\xi)=u.
    \end{equation}
Here, we may assume the equation holds for $u$ in a small neighborhood of the origin.
\end{definition}

Since the phase function $\phi$ is homogeneous of degree one in $\xi$, we have
$\partial_\xi \phi(u;\xi)=\partial_\xi \phi(u;s\xi)$ for any $s>0$. Consequently,
\begin{equation}
\label{1-homo}
    \Phi(u;\xi)=\Phi(u;s\xi)
\end{equation}

Moreover, by invoking the inverse function theorem together with conditions
$(\textup{D1}_\bA)$ and $(\textup{D2}_\bA)$, we obtain uniform $L^\infty$ bounds for
$\Phi$ and its derivatives. Differentiating \eqref{defPhi} with respect to $u$ yields
\[
\partial^2_{\xi, x'= \Phi(u,x_n;\xi)} \phi \cdot \partial_u \Phi = I_n.
\]
As a consequence,
\[
\det \partial_u \Phi(u,x_n;\xi) \gtrsim 1,
\]
That is, the matrix $\partial_u \Phi(u,x_n;\xi)$ is quantitatively non-singular.

Define $\gamma^\lambda: \mathbb{R}^{n-1} \times \mathbb{R} \times \mathbb{A} \mapsto \mathbb{R}^{n-1}$ by
\begin{equation}
\nonumber
    \ga^\la(u,x_n;\xi):=\la \Phi(u/\la,x_n/\la;\xi). 
\end{equation}

Then one has
\begin{equation}
\label{transformation-id}
    \partial_\xi\phi^\la(\ga^\la(u,x_n;\xi),t;\xi)=u.
\end{equation}

For each fixed pair $(u,\xi)$, the mapping $x_n \mapsto (\gamma^\lambda(u,x_n;\xi), x_n)$, with $x_n \in [-R,R]$, traces out a curve. The wave packets considered here will be defined as certain non-isotropic neighborhoods surrounding these curves.

\medskip
We proceed to describe the wave packet decomposition of $T^\lambda f$ at scale $R$. Cover $B^{n-2}_1(0)$ by finite-overlapping caps $\theta$ of diameter $\sim R^{-1/2}$. Let $\Theta_{R^{-1/2}} := \{\theta\}$ and $\{\psi_\theta\}$ be a smooth partition of unity subordinate to this cover. For each $\theta$ let $\omega_\theta$ denote the center of $\theta$ and define $\tilde{\psi_\theta}$ by
\begin{equation*}
\tilde{\psi_\theta}(\xi',\xi_{n-1}) :=
\begin{cases}
\psi_\theta(\xi'/\xi_{n-1}),
& \text{if } (\xi',\xi_{n-1}) \in \mathbb{A}, \\[4pt]
0,
& \text{otherwise}.
\end{cases}
\end{equation*}
Let $f_\theta := \tilde{\psi}_\theta f$. Now we can write
\[ f = \sum_{\theta\in\Theta_{r^{-1/2}}}f_\theta\]
Let $\tilde{\theta} \subset \mathbb{R}^{n-1}$ be a box of dimensions $1 \times R^{-1/2} \times ...\times R^{-1/2}$ such that
\[\{(\xi',\xi_{n-1}):\frac{\xi'}{\xi_{n-1}} \in \theta, 1 \leq \xi_{n-1} \leq 2\} \subset \tilde{\theta}\]
In addition, we denote $C\tilde{\theta}$ by a $1 \times CR^{-1/2} \times ...\times CR^{-1/2}$ box satisfying
\[\{(\xi',\xi_{n-1}):\frac{\xi'}{\xi_{n-1}} \in C\theta, 0.9 \leq \xi_{n-1} \leq 2.1\} \subset C\tilde{\theta}\]
\smallskip

We first introduce some notations before decomposing in physical space.
\begin{definition} \cite[Definition 3.4]{GW} \label{dual-box}
Let $U\subset \R^m$ be a box of dimensions $a_1\times \dots\times a_m$. Define $U^*$ to be the {\bf dual box} of $U$ centered at the origin of dimensions $a_1^{-1}\times \dots\times a_m^{-1}$, and with edges parallel to the corresponding edges of $U$.
\end{definition}
\begin{definition} \cite[Remark 3.6]{GW} \label{dilation}
For a slab $U\subset \R^{n-1}$ of dimensions $Rs^2\times Rs\times \dots\times Rs$ with $s\in[R^{-1/2},1]$ and a constant $C>0$, define $CU$, the non-isotropic $C$-dilation of $U$, to be a slab of dimensions $C^2Rs^2\times CRs\times \dots\times CRs$ with the same center as $U$.

For two slabs $U_1, U_2\subset \R^n$, we say they are \textbf{comparable} if 
    \[\frac{1}{C}U_1\subset U_2\subset C U_1.\] 
Here, $C=C_\phi$ is a constant that may vary from line to line, but eventually only depends on $\phi$. 

\end{definition}

We now carry out the wave packet decomposition of $f$. Fix $\theta \in \Theta_{R^{-1/2}}$.
We cover $\mathbb{R}^{n-1}$ by a finitely overlapping family of
$R^{2\delta} \times R^{1/2+\delta} \times \cdots \times R^{1/2+\delta}$ slabs
that are parallel to $R^\delta\tilde{\theta}^*$. Denote this collection by
\begin{equation}
\label{T-flat}
\mathcal{T}_\theta^\flat := \{T^\flat\}.
\end{equation}

Let $\{\eta_{T^\flat}\}$ be a smooth partition of unity subordinate to this cover.
Then we may decompose $\widehat{f}$ as
\[
\widehat{f}
= \sum_{\theta \in \Theta_{R^{-1/2}}}
  \sum_{T^\flat \in \mathcal{T}_\theta^\flat}
  \eta_{T^\flat}\,\widehat{f}_\theta .
\]

For each $\theta \in \Theta_{R^{-1/2}}$ and $T^\flat \in \mathcal{T}_\theta^\flat$,
the function
$\widecheck{\eta_{T^\flat}\widehat{f}_\theta}
= \widecheck{\eta_{T^\flat}} * f_\theta$
is essentially supported in $2\tilde{\theta}$.
Consequently,
\begin{equation}
\label{wave packet 1}
f
= \sum_{\theta \in \Theta_{R^{-1/2}}}
  \sum_{T^\flat \in \mathcal{T}_\theta^\flat}
  (\widecheck{\eta_{T^\flat}} * f_\theta)\,\psi_{\tilde{\theta}}
  + \mathrm{Rap}(r)\|f\|_2 ,
\end{equation}
where $\psi_{\tilde{\theta}}$ is a smooth bump function satisfying
$\supp \psi_{\tilde{\theta}} \subset \tilde{3\theta}$ and
$\psi_{\tilde{\theta}}(\xi)=1$ for all $\xi \in \tilde{2\theta}$.

For each $\theta \in \Theta_{R^{-1/2}}$ and $T^\flat \in \mathcal{T}_\theta^\flat$,
we associate a tube
\[
T
:= \bigl\{
\bigl(\gamma^\lambda(u,x_n;\omega_\theta,1),x_n\bigr)
: u \in R^\delta T^\flat,\ -R \le x_n \le R
\bigr\}.
\]
and define
\begin{equation} \label{def fT}
f_T := (\widecheck{\eta_{T^\flat}} * f_\theta)\,\psi_{\tilde{\theta}}.
\end{equation}

$T^\lambda f_T$ is concentrated on $T$. A more precise statement is as follows:
\begin{lemma}\label{lemsuppT} \cite[Lemma 3.12]{GW}
    $(T^\lambda f_T)\Id_{B_R^n(0)}$ is essentially supported in $ T$. In other words, \[(T^\lambda f_T)\Id_{B_R^n(0)\setminus  T}=\rap(R)\|f\|_2.\]
\end{lemma}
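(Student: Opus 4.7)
The plan is to prove this standard wave-packet support estimate by non-stationary phase applied to an oscillatory integral representation of $T^\lambda f_T$. The geometric heart of the argument is the identity \eqref{transformation-id}: it ties the $\xi$-critical point of the phase $\phi^\lambda(x;\xi) - y\cdot\xi$ to the curve $\gamma^\lambda$ defining $T$, so a point $x$ lying outside $T$ precludes a critical point within the support of the $\xi$-cutoff.

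First, using the definition $f_T = (\check{\eta}_{T^\flat}\ast f_\theta)\,\psi_{\tilde{\theta}}$ and the Fourier representation $\check{\eta}_{T^\flat}(\xi-\eta) = \int \eta_{T^\flat}(y)\,e^{2\pi i y\cdot(\xi-\eta)}\,dy$, I would interchange the order of integration to rewrite
\[
T^\lambda f_T(x) = \int \eta_{T^\flat}(y)\,\widehat{f_\theta}(y)\,I(x,y)\,dy,
\qquad
I(x,y) := \int e^{-2\pi i(\phi^\lambda(x;\xi) - y\cdot\xi)}\,a^\lambda(x;\xi)\,\psi_{\tilde{\theta}}(\xi)\,d\xi .
\]
Since $\supp \eta_{T^\flat} \subset T^\flat$, it suffices to show that $|I(x,y)| = \mathrm{RapDec}(R)$ uniformly for $y \in T^\flat$ and $x \in B^n_R(0) \setminus T$.

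To analyze $I(x,y)$, I would change variables $\xi = \omega_\theta + \eta$ and Taylor-expand the phase about $\omega_\theta$. The linear term is $v(x)\cdot\eta$ with $v(x) := \partial_\xi\phi^\lambda(x;\omega_\theta) - y$; the quadratic term has Hessian $H(x) := \partial^2_{\xi\xi}\phi^\lambda(x;\omega_\theta)$, which vanishes on $\mathbb{R}\omega_\theta$ by the homogeneity condition \textbf{(H1)}; the cubic remainder is $O(\lambda R^{-3/2})$, which is negligible under the reduction $R \leq \lambda^{1-\varepsilon}$. By \eqref{transformation-id}, the hypothesis $x \notin T$ is equivalent to $\partial_\xi\phi^\lambda(x;\omega_\theta) \notin R^\delta T^\flat$, so $v(x)$ has modulus at least $R^{4\delta}$ in the $\omega_\theta$-direction, or at least $R^{1/2+2\delta}$ in some transverse direction. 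In the $\omega_\theta$-parallel case, the phase reduces to a linear function of $\xi_{n-1}$ (since $H\omega_\theta = 0$) and repeated integration by parts in $\xi_{n-1}$ yields the decay. In the transverse case, I would first complete the square on the transverse variables to remove the linear transverse term, and then integrate by parts in the shifted transverse variable. Combining the bound $|I(x,y)| = \mathrm{RapDec}(R)$ with Cauchy--Schwarz and Plancherel gives $|T^\lambda f_T(x)| \lesssim R^{-N}\|\eta_{T^\flat}\widehat{f_\theta}\|_2 \lesssim R^{-N}\|f\|_2$ for every $N$, which is the claim.

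\textbf{Main obstacle.} The delicate step is the transverse non-stationary phase bound, because $\|\partial^2_{\xi\xi}\phi^\lambda\| = O(\lambda)$ can exceed the modulus $R^{1/2+2\delta}$ of $v(x)$ when $\lambda$ is large relative to $R$. Completing the square is therefore essential, and one must verify that the shifted critical point lies well outside $\supp\psi_{\tilde{\theta}}$ so that iterated integration by parts gains a positive power of $R$ at each step; the $R^\delta$ anisotropic fattening built into the definition of $T$ is calibrated precisely to leave this margin in the interplay between $\lambda$ and $R$.
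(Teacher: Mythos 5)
The paper does not supply its own proof of this lemma --- it simply cites \cite[Lemma 3.12]{GW} --- so your argument must be judged on its own merits rather than against an in-paper proof. Your overall plan (rewrite $T^\lambda f_T(x)$ as an integral against the kernel $I(x,y)$, reduce to showing $I(x,y)=\rap(R)$ for $y\in T^\flat$ and $x\in B^n_R(0)\setminus T$, then use the identity \eqref{transformation-id} to show that $x\notin T$ forces the $\xi$-gradient of the phase to be large over the support of $\psi_{\tilde\theta}$) is indeed the standard non-stationary phase argument underlying the cited lemma, and the first half of your write-up is correct.

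The part I want to push back on is your treatment of the transverse case and your diagnosis of the ``main obstacle.'' You correctly observe that the naive bound $\|\partial^2_{\xi\xi}\phi^\lambda\|=O(\lambda)$ is a threat, but completing the square does not rescue the argument. If the transverse Hessian really had size $\sim\lambda$ and the transverse component of $v(x)$ had size $\sim R^{1/2+2\delta}$, the shifted critical point would sit at distance $\sim R^{1/2+2\delta}/\lambda$ from the expansion center. Under the standing reduction $R\le\lambda^{1-\varepsilon}$, this is \emph{smaller} than the support radius $R^{-1/2}$ of $\psi_{\tilde\theta}$ (the inequality $R^{1/2+2\delta}/\lambda \gg R^{-1/2}$ would require $R^{1+2\delta}\gg\lambda$, which fails). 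So the shifted critical point lands \emph{inside} the support, and iterated integration by parts gains nothing. The actual resolution is that after the type-$\bfone$ normalization and the phase simplification \eqref{assumption 3}--\eqref{assumption 4}, one has $\partial^2_{\xi\xi}\phi(0,0;\xi)=0$, and hence $\partial^2_{\xi\xi}\phi^\lambda(x;\xi)=\lambda\,\partial^2_{\xi\xi}\phi(x/\lambda;\xi)=O(|x|)=O(R)$ for $x\in B^n_R(0)$ --- this is $O(R)$, not $O(\lambda)$. With that bound in hand, the $\xi'$-derivative of the phase varies by at most $O(R\cdot R^{-1/2})=O(R^{1/2})$ over $\supp\psi_{\tilde\theta}$, which is negligible against the lower bound $\gtrsim R^{1/2+2\delta}$ on the transverse component of $v(x)$, so a direct integration by parts already closes the argument. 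In other words, the crucial input is the structural smallness of the Hessian coming from the normalization/simplification of the phase, not a completion of the square; your write-up should invoke that structure explicitly (as \cite{GW} do) rather than try to finesse the issue by shifting the critical point.

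A secondary, smaller point: your claim that in the radial case ``the phase reduces to a linear function of $\xi_{n-1}$'' is not literally correct in the Cartesian $\eta$-coordinates you use; $H(\omega_\theta,1)=0$ is a pointwise statement about the Hessian at $\xi=(\omega_\theta,1)$, not a global linearity. The clean way to make this rigorous is to pass to the coordinates $(\omega,\xi_{n-1})$ with $\omega=\xi'/\xi_{n-1}$, in which the degree-one homogeneity \textbf{(H1)} gives the exact factorisation
\[
\phi^\lambda(x;\xi)-y\cdot\xi \;=\; \xi_{n-1}\bigl[\phi^\lambda(x;\omega,1)-y'\cdot\omega-y_{n-1}\bigr],
\]
which genuinely \emph{is} linear in $\xi_{n-1}$ for fixed $\omega$; the radial integration by parts is then immediate, and the transverse derivative in $\omega$ is controlled exactly as above.
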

With this notation \eqref{def fT}, \eqref{wave packet 1} may be rewritten as
\[
f
= \sum_{\theta \in \Theta_{R^{-1/2}}}
  \sum_{T \in \mathcal{T}_\theta} f_T
  + \mathrm{Rap}(R)\|f\|_2 .
\]
We write $\mathcal{T}_\theta := \{T : T^\flat \in \mathcal{T}_\theta^\flat\}$ and $\mathcal{T} := \bigcup_{\theta \in \Theta_{R^{-1/2}}} \mathcal{T}_\theta$. 

This yields the wave packet decomposition of $f$ at scale $R$. From Plancherel theorem, we see that
\begin{lemma}\label{l2-orthogonalty}
    Suppose $f \in \mathcal{S}(\mathbb{R}^{n-1})$ with $\supp f \subset \mathbb{A}$ and $\mathcal{T}' \subset \mathcal{T}$, then
    \begin{equation*}
        \|\sum_{T \in \mathcal{T}'}f_T\|_{L^2(\mathbb{R}^{n-1})}^2 \sim \sum_{T \in \mathcal{T}'} \|f_T\|_2^2
    \end{equation*}
\end{lemma}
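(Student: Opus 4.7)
The plan is to prove this almost-orthogonality via Plancherel, reducing the wave packets $f_T$ to their frequency-localized approximates $g_{T^\flat} := \widecheck{\eta_{T^\flat}} * f_\theta$. The first step is to note that $g_{T^\flat}$ is essentially supported in $2\tilde\theta$, because $f_\theta$ is supported in $\tilde\theta$ and the Schwartz tail of $\widecheck{\eta_{T^\flat}}$ concentrates on the scale dual to $T^\flat$, which is strictly finer than $\tilde\theta$; and on $2\tilde\theta$ one has $\psi_{\tilde\theta}\equiv 1$. Hence $f_T = g_{T^\flat}\psi_{\tilde\theta} = g_{T^\flat} + \rap(R)\|f\|_2$ in an $L^2$ sense, so the problem reduces to showing $\|\sum_{T \in \mathcal{T}'} g_{T^\flat}\|_2^2 \sim \sum_{T \in \mathcal{T}'} \|g_{T^\flat}\|_2^2$.

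For the within-sector piece, fix $\theta$, write $\mathcal{T}'_\theta := \mathcal{T}' \cap \mathcal{T}_\theta$, and set $H_\theta := \sum_{T \in \mathcal{T}'_\theta} \eta_{T^\flat}$. Plancherel in $\xi$ gives $\widehat{g_{T^\flat}}(u) = \eta_{T^\flat}(u)\widehat{f_\theta}(u)$ up to sign, so
\[
\Bigl\|\sum_{T \in \mathcal{T}'_\theta} g_{T^\flat}\Bigr\|_2^2 = \int H_\theta(u)^2 |\widehat{f_\theta}(u)|^2\, du, \qquad \sum_{T \in \mathcal{T}'_\theta}\|g_{T^\flat}\|_2^2 = \int \sum_{T \in \mathcal{T}'_\theta}\eta_{T^\flat}(u)^2 |\widehat{f_\theta}(u)|^2\, du.
\]
Since $\{\eta_{T^\flat}\}$ is a nonnegative smooth partition of unity with bounded overlap multiplicity, the pointwise comparison $\sum_{T \in \mathcal{T}'_\theta}\eta_{T^\flat}^2 \sim \bigl(\sum_{T \in \mathcal{T}'_\theta}\eta_{T^\flat}\bigr)^2 = H_\theta^2$ holds with constants independent of $\mathcal{T}'_\theta$, yielding the single-sector almost-orthogonality.

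For the between-sector piece, let $G_\theta := \sum_{T \in \mathcal{T}'_\theta} g_{T^\flat} = \widecheck{H_\theta} * (\tilde\psi_\theta f)$. The $\xi$-concentration of $\widecheck{H_\theta}$ is on a scale strictly finer than the $\tilde\theta$-scale on which $\tilde\psi_\theta$ varies (by a factor of at least $R^\delta$ in each direction), so $G_\theta \approx \tilde\psi_\theta \cdot (\widecheck{H_\theta} * f)$ up to $\rap(R)$ errors. Combined with the finite-overlap partition-of-unity identity $\sum_\theta \tilde\psi_\theta = 1$ on $\mathbb{A}$ and its pointwise consequence $\sum_\theta \tilde\psi_\theta^2 \sim \bigl(\sum_\theta \tilde\psi_\theta\bigr)^2 = 1$, one then obtains $\|\sum_\theta G_\theta\|_2^2 \sim \sum_\theta \|G_\theta\|_2^2$. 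Concatenating with the within-sector step closes the lemma, modulo absorbed $\rap(R)\|f\|_2^2$ errors.

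The main obstacle is the between-sector lower bound: the wedges $\tilde\theta$ are only finitely overlapping rather than disjoint, so orthogonality of the $G_\theta$'s is not immediate from support considerations, and naive Cauchy--Schwarz only produces the upper estimate. One must make quantitative use of the fact that the smoothing kernel $\widecheck{H_\theta}$ varies on a scale strictly below that of $\tilde\psi_\theta$ in order to push through the partition-of-unity identity $\sum_\theta \tilde\psi_\theta^2 \sim 1$ and recover the $L^2$ lower bound without appealing to any specific structure of the subset $\mathcal{T}'$.
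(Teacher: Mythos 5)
The paper does not actually give a proof of this lemma: it states only ``From Plancherel theorem, we see that [Lemma \ref{l2-orthogonalty}]'' with no further detail. So the exercise is to reconstruct the omitted argument, and I will assess your reconstruction on its own terms.

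Your Step 1 (replacing $f_T$ by $g_{T^\flat}:=\widecheck{\eta_{T^\flat}}*f_\theta$, using that $g_{T^\flat}$ is essentially supported in $2\tilde\theta$ where $\psi_{\tilde\theta}\equiv1$) is correct, as is Step 2: Plancherel on the $T^\flat$-Fourier side together with the pointwise comparison $\sum_{T\in\mathcal{T}'_\theta}\eta_{T^\flat}^2 \sim \bigl(\sum_{T\in\mathcal{T}'_\theta}\eta_{T^\flat}\bigr)^2$, valid for any nonnegative finitely-overlapping family, cleanly gives the single-sector two-sided bound. You could also note that the same-$\theta$ cross terms $\int\eta_{T^\flat}\eta_{T'^\flat}|\widehat{f_\theta}|^2$ are manifestly nonnegative, which gives the lower bound for free.

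Step 3 is where the argument has a genuine gap, and you say so yourself. There are two separate problems. First, the commutator claim $\widecheck{H_\theta}*(\tilde\psi_\theta f)\approx \tilde\psi_\theta\cdot(\widecheck{H_\theta}*f)$ does not hold with a $\rap(R)$ error. The kernel $\widecheck{H_\theta}$ concentrates on a scale that is only a power $R^{-\delta}$ finer than the scale on which $\tilde\psi_\theta$ varies, and its tails are merely Schwartz in a rescaled sense, so the commutator error is of size $O(R^{-c\delta})$ rather than rapidly decaying. That may still suffice for a $\sim$, but the $\rap(R)$ claim should be dropped. Second, and more seriously, even after writing $G_\theta \approx \tilde\psi_\theta\cdot(\widecheck{H_\theta}*f)$, the identity $\sum_\theta\tilde\psi_\theta^2\sim 1$ does not produce the lower bound. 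Expanding $\bigl\|\sum_\theta\tilde\psi_\theta(\widecheck{H_\theta}*f)\bigr\|_2^2$ yields cross terms $\int\tilde\psi_{\theta_1}\tilde\psi_{\theta_2}(\widecheck{H_{\theta_1}}*f)\overline{(\widecheck{H_{\theta_2}}*f)}$ for adjacent $\theta_1\neq\theta_2$, and because the inner factor $\widecheck{H_\theta}*f$ depends on $\theta$ (through the selected slabs in $\mathcal{T}'_\theta$) these cross terms are genuinely complex-valued with no sign control; no partition-of-unity identity cancels them. In fact for a generic family $\{G_\theta\}$ supported on finitely overlapping sets, the lower bound $\|\sum_\theta G_\theta\|_2^2\gtrsim\sum_\theta\|G_\theta\|_2^2$ is simply false (take $G_{\theta_1}=-G_{\theta_2}$ on an overlap), so the conclusion must use structure beyond the physical-support separation.

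What one really needs is simultaneous use of the physical and frequency localizations: for $T\in\mathcal{T}_{\theta_1}$ and $T'\in\mathcal{T}_{\theta_2}$ the inner product $\langle f_T,f_{T'}\rangle$ is $\rap(R)$-small unless $3\tilde\theta_1\cap3\tilde\theta_2\neq\emptyset$ and the slabs $2T^\flat$, $2T'^\flat$ intersect, and for fixed $T$ the number of such $T'$ is $O(1)$. This bounded-degree observation gives the upper bound directly. For the lower bound it is not enough (bounded degree only shows the off-diagonal is $O(1)$ times the diagonal, not a small fraction of it), and some further input, for example a more quantitative estimate on the adjacent-sector inner products, is required. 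You identify this lacuna in your final paragraph but do not resolve it, so the proposal establishes the upper direction and the single-sector equivalence, but leaves the cross-sector lower bound open.
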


For each tube $T \in \mathcal{T}$ and each $-R \le t \le R$, the cross-section $T \cap \{x_n = t\}$ is a rectangular box with dimensions $R^{2\delta} \times R^{1/2+\delta} \times \cdots \times R^{1/2+\delta}$. To state this more precisely, we introduce the following definition.

\begin{definition}\cite[Definition 3.5]{GW}
\label{defgaV}
Given $\xi_0\in \mathbb{A}$ and a box $V\subset B_R^{n-1}(0)$, define
\begin{equation}\label{defgaVeq}
    \Ga_V(\xi_0;R):=\{ (\ga^\la(u,x_n;\xi_0),t):u\in V,|x_n|\le R \}.
\end{equation}
$\Ga_V(\xi_0;R)$ is called a curved plank with base $V$ and direction $\xi_0$.
\end{definition}
Using the notation of curved planks, each tube $T\in \mathcal{T}$ can be represented as
    \[T= \Ga_{R^\de T^\flat}(\omega_\theta,1;R), \]
that is, a curved plank with base $R^\de T^\flat$ and direction $(\omega_\theta,1)$. We refer to each such $T$ as an \textbf{$R$-plank}. Formally,
\begin{lemma}\label{geometric} \cite[Lemma 3.9]{GW}
Let $s\in[R^{-1/2},1]$ and $\xi_0 \in \mathbb{A}$. Let $V\subset \mathbb{R}^{n-1}$ be a slab of dimensions $Rs^2\times Rs \times \dots\times Rs $. Then for $|t|\leq R$, each $t$-slice $\Ga_V(\xi_0;R)\cap \{x_n=t\} $ is comparable to a $Rs^2\times Rs\times \dots\times Rs$-slab.
\end{lemma}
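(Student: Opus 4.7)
My plan is to prove this via Taylor expansion of the slice map $F_t(u) := \gamma^\lambda(u,t;\xi_0)$ about the centre $u_0$ of $V$, showing that its nonlinear part is negligible compared with the slab's dimensions. The key inputs are the near-identity bound on $\partial_u\Phi$ supplied by $(\textup{H1}_{\mathbf{1}})$ together with the $O(\lambda^{-1})$ bound on $\partial_u^2 F_t$. Unpacking the definitions, $\Gamma_V(\xi_0;R) \cap \{x_n = t\} = \{(F_t(u),t) : u \in V\}$, so it suffices to show that $F_t(V) \subset \mathbb{R}^{n-1}$ is comparable to a slab of dimensions $Rs^2 \times Rs \times \cdots \times Rs$.

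Next I would analyse $\partial_u F_t$. Differentiating the defining relation \eqref{defPhi} in $u$ gives $\partial^2_{\xi x'}\phi \cdot \partial_u\Phi = I_{n-1}$, and the hypothesis $(\textup{H1}_{\mathbf{1}})$ forces $\|\partial^2_{\xi x'}\phi - I_{n-1}\|_{\mathrm{op}} \le \cpar$, so $\partial_u\Phi = I_{n-1} + O(\cpar)$. Since $F_t(u) = \lambda\Phi(u/\lambda,t/\lambda;\xi_0)$, we obtain $\partial_u F_t(u) = \partial_u\Phi(u/\lambda,t/\lambda;\xi_0)$, which is likewise within $O(\cpar)$ of the identity. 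Differentiating twice and using the higher-derivative bounds in $(\textup{D1}_{\mathbf{1}})$--$(\textup{D2}_{\mathbf{1}})$ (applied via the implicit function theorem) yields $\|\partial_u^2 F_t\|_{L^\infty} \lesssim \lambda^{-1}$.

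With these estimates the Taylor expansion around $u_0$ reads $F_t(u) = F_t(u_0) + \partial_u F_t(u_0)(u-u_0) + E(u)$, where $|E(u)| \lesssim \lambda^{-1}|u-u_0|^2 \lesssim R^2 s^2/\lambda$ for $u \in V$. Since $R \leq \lambda^{1-\varepsilon}$, this error is bounded by $R^{-\varepsilon} \cdot Rs^2$, which is much smaller than the short dimension $Rs^2$ and a fortiori smaller than the long dimensions $Rs$. The linear term $\partial_u F_t(u_0)(V - u_0)$, being the image of the slab $V - u_0$ under a linear map within $\cpar$ of the identity, is comparable (with absolute constants) to a slab of the same dimensions $Rs^2 \times Rs \times \cdots \times Rs$. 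Adding the negligible $E$ preserves this comparability and gives the conclusion.

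The main obstacle is really just the error bookkeeping: one must verify that $R^2 s^2/\lambda$ is strictly smaller than the short dimension $Rs^2$, which is exactly where the parameter constraint $R \le \lambda^{1-\varepsilon}$ (from Subsection 3.1) is used. Everything else follows routinely from the near-identity control inherent to the type-$\mathbf{1}$ normalisation.
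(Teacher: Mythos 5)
Your proposal is correct, and it follows the natural (and almost certainly the intended) route: Taylor-expand the slice map $F_t(u) = \lambda\Phi(u/\lambda, t/\lambda;\xi_0)$ about the centre of $V$, show the linear part is a near-identity map using $(\textup{H1}_{\mathbf 1})$, and control the Hessian by $\lambda^{-1}$ via implicit differentiation of the relation $\partial_\xi\phi(\Phi,x_n;\xi)=u$ together with the type-$\mathbf 1$ derivative bounds, so that the quadratic error $|E(u)|\lesssim\lambda^{-1}(Rs)^2 = (R/\lambda)\cdot Rs^2 \lesssim Rs^2$ is dominated by the short side of the slab.

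Two small remarks worth recording. First, for \emph{comparability} you only need the error to be $\lesssim Rs^2$, not $\ll Rs^2$, so $R\le\lambda$ already suffices; the stricter bound $R\le\lambda^{1-\varepsilon}$ is not actually required here. Second, the step ``the image of a slab under a linear map within $\cpar$ of the identity is comparable to a slab of the same dimensions'' deserves a line of justification, because comparability here is non-isotropic (the short direction dilates by $C^2$, the long ones by $C$) and a near-identity map can tilt a very eccentric box by angle $\sim\cpar$, which is \emph{not} negligible in the ratio $Rs^2 : Rs$. The resolution is that the comparison slab $U$ in the conclusion is not required to have any prescribed orientation: choosing $U$ to have its long faces aligned with the hyperplane spanned by the images of the long edges of $V$, one checks that $L(V)$ is sandwiched between $C^{-1}U$ and $CU$ with $C$ depending only on $\cpar$ (hence only on $\phi$), uniformly in the eccentricity $s^{-1}$. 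With that observation made explicit, the argument is complete.
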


\bigskip
\subsection{Some simple estimates} The following standard estimates will play an important role in the decoupling argument that follows. Roughly they assert that $T^\lambda f_T$ behaves essentially like a constant on $T$. 
\begin{lemma} \label{essential constant}
    Suppose $T^{\lambda}$ is an oscillatory integral operator satisfying the cinematic curvature condition of type $\bfone$. Assume \eqref{assumpetion 1}. Suppose  $q \geq 2$ and $g \in \mathcal{S}(\mathbb{R}^{n-1})$ is supposed in $\mathbb{A}$, then the estimate
    \[ \| T^\lambda g\|_{L^q(\mathbb{R}^{n-1} \times \{t\})}^q \lesssim_{\phi,q} |\supp\ g|^{\frac{q}{2}-1}\|g\|_2^q\]
    holds uniformly for all $t \in \mathbb{R}$. 
\end{lemma}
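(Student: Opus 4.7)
The plan is to obtain the bound by interpolating between an $L^2 \to L^2$ estimate and the trivial $L^1 \to L^\infty$ estimate for the frozen-time operator $g \mapsto T^\lambda g(\,\cdot\,,t)$.

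\textbf{Step 1 ($L^2$ endpoint).} I would first show that for each fixed $t \in [-\lambda,\lambda]$ (the support in $x_n$),
\[
\|T^\lambda g(\,\cdot\,,t)\|_{L^2(\R^{n-1})} \lesssim_{\phi} \|g\|_{L^2(\R^{n-1})}
\]
uniformly in $\lambda \ge 1$ and $t$. At fixed $t$ the operator is an oscillatory integral in $(x',\xi)\in \R^{n-1}\times \R^{n-1}$ whose phase $\phi^\lambda(x',t;\xi)$ has, by condition \textbf{(H2)} together with the type-$\mathbf{1}$ normalization $(\textup{H1}_{\mathbf{1}})$, a mixed Hessian $\partial^2_{x'\xi}\phi^\lambda$ that is uniformly bounded and invertible on $\supp a^\lambda$. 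Applying the $TT^*$ method: the kernel of $(T^\lambda)(T^\lambda)^*$ at time $t$ is
\[
K_t(x',y') = \int e^{-2\pi i [\phi^\lambda(x',t;\xi)-\phi^\lambda(y',t;\xi)]}\, a^\lambda(x',t;\xi)\,\overline{a^\lambda(y',t;\xi)}\, d\xi .
\]
Non-stationary phase in $\xi$ (the gradient $\partial_\xi[\phi^\lambda(x';\xi)-\phi^\lambda(y';\xi)]$ is comparable to $|x'-y'|$ by the non-degeneracy and mean value theorem) gives $|K_t(x',y')|\lesssim_N (1+|x'-y'|)^{-N}$, and Schur's test then yields the $L^2$ bound.

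\textbf{Step 2 ($L^\infty$ endpoint).} By Cauchy-Schwarz,
\[
\|T^\lambda g(\,\cdot\,,t)\|_{L^\infty(\R^{n-1})} \le \|a^\lambda\|_\infty \int_{\supp g} |g(\xi)|\,d\xi \lesssim |\supp g|^{1/2}\,\|g\|_{L^2(\R^{n-1})},
\]
using $\|a^\lambda\|_\infty \lesssim 1$ from \eqref{assumpetion 1}.

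\textbf{Step 3 (Interpolation).} For $q\ge 2$ apply the log-convexity of $L^q$ norms (or Riesz-Thorin) with $\tfrac{1}{q}=\tfrac{1-\theta}{2}+\tfrac{\theta}{\infty}$, i.e.\ $\theta = 1-\tfrac{2}{q}$:
\[
\|T^\lambda g(\,\cdot\,,t)\|_{L^q}^q \le \|T^\lambda g(\,\cdot\,,t)\|_{L^2}^{2}\,\|T^\lambda g(\,\cdot\,,t)\|_{L^\infty}^{q-2} \lesssim \|g\|_2^2 \cdot |\supp g|^{(q-2)/2}\|g\|_2^{q-2} = |\supp g|^{q/2-1}\|g\|_2^{q}.
\]

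The only non-routine ingredient is the $L^2$ estimate in Step 1; the main obstacle there is making the Schur-test bound uniform in $\lambda$ and $t$, but this follows cleanly because the type-$\mathbf{1}$ hypotheses $(\textup{H1}_{\mathbf{1}})$-$(\textup{D2}_{\mathbf{1}})$ give uniform control of $\partial^2_{x'\xi}\phi$ and its higher derivatives on the support of $a$, and these properties are preserved under the rescaling $\phi \mapsto \phi^\lambda$ on $B^n_\lambda$. Steps 2 and 3 are then immediate.
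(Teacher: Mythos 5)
Your argument is correct and yields the claimed bound. It is, in essence, the paper's proof with Hörmander's generalized Hausdorff--Young theorem unpacked: the paper introduces the rescaled operator $S^\lambda g(u) := T^\lambda g(\lambda u, t)$, cites H\"ormander's $L^{q'} \to L^q$ bound (Lemma~\ref{General Hausdorﬀ–Young}, which itself is proved by $TT^*$ plus Riesz--Thorin), rescales back, and finishes with H\"older's inequality to insert the $|\supp g|$ factor. You instead prove the $L^2\to L^2$ bound directly via $TT^*$ and Schur's test at the frozen time slice, get the $L^\infty$ endpoint trivially with Cauchy--Schwarz already carrying the $|\supp g|^{1/2}$, and then interpolate the two using log-convexity of the $L^q$ norms of the single function $T^\lambda g(\cdot,t)$ rather than operator interpolation. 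Both routes rest on the same two ingredients (non-degenerate mixed Hessian $\Rightarrow$ $L^2$ boundedness; triviality of $L^1 \to L^\infty$) and differ only in where H\"older is applied and whether one interpolates on the operator side or the function side. Your version is more self-contained, at the modest cost of re-deriving the uniform kernel decay $|K_t(x',y')| \lesssim_N (1+|x'-y'|)^{-N}$; this is indeed uniform in $\lambda$ and $t$ precisely because $\partial^2_{x'\xi}\phi^\lambda(x;\xi) = \partial^2_{x'\xi}\phi(x/\lambda;\xi)$ and the type-$\mathbf{1}$ and $(\textup{M}_{\bA})$ conditions keep the relevant segment inside $B_1^n$ where the uniform bounds hold.
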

Two direct corollaries are
\begin{corollary} \label{essential constant decoupling}
    Suppose $T^{\lambda}$ is an oscillatory integral operator satisfying the cinematic curvature condition of type $\bfone$. Suppose $a$ satisfies \eqref{assumpetion 1}). Let $f \in \mathcal{S}(\mathbb{R}^{n-1})$ with $\supp f \subset \mathbb{A}$ and $T \in \mathcal{T}$. Let $q = 2 + \frac{4}{n-2}$, then we have
    \begin{equation*}
    \|T^{\lambda}f_T\|_{L^q(B_{2R}^n(0))} \lesssim_{\phi}\|f_T\|_{L^{2}(\mathbb{R}^{n-1})}
    \end{equation*}
\end{corollary}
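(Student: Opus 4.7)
The plan is to deduce the estimate directly from the slicewise bound of Lemma \ref{essential constant} followed by an application of Fubini in the $x_n$-variable. The value $q = 2 + \frac{4}{n-2}$ is calibrated precisely so that $\frac{q}{2}-1 = \frac{2}{n-2}$, which is exactly the exponent needed to balance the $(n-2)$-dimensional frequency support of $f_T$ against an $x_n$-range of length $\sim R$.

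First I would observe that $f_T = (\widecheck{\eta_{T^\flat}}*f_\theta)\psi_{\tilde\theta}$ is supported in $\tilde{3\theta}$, a slab of dimensions $1\times R^{-1/2}\times\cdots\times R^{-1/2}$, so $|\supp f_T|\lesssim R^{-(n-2)/2}$. Since $\tilde{3\theta}$ is contained in a fixed (harmless) enlargement of $\mathbb{A}$, Lemma \ref{essential constant} applies with $g = f_T$ and yields, for every $t\in \mathbb{R}$,
\begin{equation*}
\|T^\lambda f_T\|_{L^q(\mathbb{R}^{n-1}\times\{t\})}^q \lesssim_\phi |\supp f_T|^{\frac{q}{2}-1}\|f_T\|_2^q \lesssim R^{-1}\|f_T\|_2^q,
\end{equation*}
where the last bound uses the identity $\tfrac{n-2}{2}\cdot \tfrac{2}{n-2} = 1$.

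Next I would integrate this slicewise estimate over $x_n\in[-2R,2R]$ via Fubini:
\begin{equation*}
\|T^\lambda f_T\|_{L^q(B_{2R}^n(0))}^q \leq \int_{-2R}^{2R} \|T^\lambda f_T\|_{L^q(\mathbb{R}^{n-1}\times\{t\})}^q\,dt \lesssim R\cdot R^{-1}\|f_T\|_2^q \lesssim \|f_T\|_2^q.
\end{equation*}
Taking $q$-th roots then produces the desired bound.

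There is no serious obstacle in this proof; it is essentially bookkeeping to verify that the chosen $q$ generates exactly the decay $R^{-1}$ needed to absorb the length of the $x_n$-interval. The only minor point requiring care is ensuring that the frequency support of $f_T$ sits inside $\mathbb{A}$ (or a fixed enlargement of it) so that Lemma \ref{essential constant} can be invoked, which follows directly from the construction of the cover $\{\tilde\theta\}$ and the cutoffs $\psi_{\tilde\theta}$. Notably, the essential-support Lemma \ref{lemsuppT} is not needed here, because the slicewise estimate already controls $T^\lambda f_T$ on the full hyperplane $\mathbb{R}^{n-1}\times\{t\}$, not merely on the cross-section of $T$.
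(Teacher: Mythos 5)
Your proof is correct and is essentially identical to the paper's own argument: both observe that $|\supp f_T|\lesssim R^{-(n-2)/2}$, apply Lemma~\ref{essential constant} at each time slice with the calibration $\tfrac{q}{2}-1=\tfrac{2}{n-2}$ to obtain a per-slice bound of $R^{-1}\|f_T\|_2^q$, and integrate over $t\in[-2R,2R]$ by Fubini. Your remark that Lemma~\ref{lemsuppT} is not needed here is accurate and reflects what the paper's proof implicitly uses as well.
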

\begin{proof}[Proof of Corollary \ref{essential constant decoupling} assuming Lemma \ref{essential constant}] Since $f_T$ is supported in $\tilde{\theta}$ for some $\theta \in \Theta_{R^{-1/2}}$, we have $|\supp\ f_T| \lesssim R^{-\frac{n-2}{2}}$. Thus, Lemma \ref{essential constant} gives
\[\|T^{\lambda}f_T\|_{L^q(B_R^n(0))}^q \leq \int_{-2R}^{2R} \|T^\lambda f_T\|_{L^q(\mathbb{R}^{n-1} \times \{t\})}^q dt \lesssim_{\phi}\|f_T\|_{L^{2}(\mathbb{R}^{n-1})}^q\]
\end{proof}
\begin{corollary} \label{l2 estimate}
    Suppose $T^{\lambda}$ is an oscillatory integral operator satisfying the cinematic curvature condition of type $\bfone$. Assume \eqref{assumpetion 1}, then the estimate
    \begin{equation*}
    \|T^{\lambda}f\|_{L^2(Q)}^2 \lesssim_{\phi}R^{1/2}\|f\|_{L^{2}(\mathbb{R}^{n-1})}^2
    \end{equation*}
    holds uniformly for all $f \in \mathcal{S}(\mathbb{R}^{n-1})$ with $\supp f \subset \mathbb{A}$ and all cubes $Q$ of sidelength $R^{1/2}$. 
\end{corollary}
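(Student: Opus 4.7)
The plan is to reduce the $L^2$ estimate over the cube $Q$ to the slicewise estimate already provided by Lemma~\ref{essential constant}, taken at the endpoint $q=2$. Since $Q$ is a cube of sidelength $R^{1/2}$ in $\R^n$, its projection onto the $x_n$-axis is an interval $I \subset \R$ of length $R^{1/2}$, and $Q \subset \R^{n-1} \times I$. By Fubini one therefore has
\begin{equation*}
\|T^\lambda f\|_{L^2(Q)}^2 \le \int_I \|T^\lambda f(\cdot,t)\|_{L^2(\R^{n-1})}^2\,dt.
\end{equation*}

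Next, I apply Lemma~\ref{essential constant} to $g := f$ with $q = 2$. Because the exponent $\tfrac{q}{2} - 1$ vanishes at $q=2$, the $|\supp g|$-factor drops out entirely, and the lemma reduces to the uniform slice bound
\begin{equation*}
\|T^\lambda f(\cdot,t)\|_{L^2(\R^{n-1})}^2 \lesssim_{\phi} \|f\|_{L^2(\R^{n-1})}^2 \qquad \textrm{for every } t \in \R.
\end{equation*}
Substituting this into the previous display and integrating over $t \in I$, the factor $|I| = R^{1/2}$ appears on the right-hand side, yielding precisely the asserted estimate.

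There is no genuine obstacle here: the corollary is essentially the $q=2$ endpoint of Lemma~\ref{essential constant} combined with a one-dimensional integration. The only cosmetic point to observe is that $Q$ is an axis-aligned cube, so that the $x_n$-extent is exactly $R^{1/2}$; once this trivial geometric remark is made, the argument reduces to Fubini and a single application of the slicewise inequality already at our disposal.
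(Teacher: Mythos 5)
Your argument is correct and is exactly the ``direct corollary'' the paper intends: slice $Q$ over its $x_n$-interval of length $R^{1/2}$, apply Lemma~\ref{essential constant} at $q=2$ (where the $|\supp g|^{q/2-1}$ factor trivializes) to each slice, and integrate. The paper leaves this proof to the reader, and your two-step deduction is the natural one.
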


To prove Lemma \ref{essential constant}, we will require the following result,
due to H\"ormander, which generalizes the Hausdorff--Young inequality.

\begin{lemma} \label{General Hausdorﬀ–Young} \cite[Theorem 1.1]{H}
    Suppose $q \geq 2$. Let $S^\lambda$ be an oscillatory integral operator defined as follows:
    \[ S^\lambda h(x):= \int e^{-2\pi \lambda i \bar{\phi}(x,y)} \bar{a}(x,y) h(y) dy\]
    where $\bar{\phi}:B^{n-1}_1(0) \times \mathbb{A} \mapsto \mathbb{R}$ and $\bar{a}:B^{n-1}_1(0) \times \mathbb{A} \mapsto \mathbb{R}^{+}$ satisfy
    \begin{align}
    \det \partial^2_{x,y}\bar{\phi}(x,y) > 0 \qquad \text{for all $ (x,y)\in B^{n-1}_1(0)\times \mathbb{A}$} \label{sh1} \\
    \|\partial_{x,y}^\beta \bar{a}\|_{L^\infty(B^{n-1}_1(0) \times \mathbb{A})} \lesssim_\beta 1 \qquad \qquad \text{for all $1 \leq |\beta| \leq N-1$.}\label{sh2}
    \end{align}
    then we have
    \begin{equation*}
        \|S^\lambda g\|_{L^q(\mathbb{R}^{n-1})} \lesssim_{\phi,q} \lambda^{-\frac{n-1}{q}} \|g\|_{L^{q'}(\mathbb{R}^{n-1})}
    \end{equation*}
    where $q'$ is the H\"older conjugate of $q$. 
\end{lemma}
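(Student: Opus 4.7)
The plan is to prove this classical H\"ormander-type estimate by interpolating between the trivial $L^1 \to L^\infty$ bound and a non-trivial $L^2 \to L^2$ bound with decay $\lambda^{-(n-1)/2}$. The trivial endpoint is immediate: since $\|\bar a\|_{L^\infty} \lesssim 1$ and $\bar a$ has compact support,
\[ \|S^\lambda g\|_{L^\infty(\R^{n-1})} \leq \|\bar a\|_{L^\infty}\|g\|_{L^1(\R^{n-1})}, \]
giving a bound with constant $\lesssim 1 = \lambda^0$. Riesz--Thorin interpolation between the pairs $(p_0,q_0) = (1,\infty)$ with norm $M_0 \lesssim 1$ and $(p_1,q_1) = (2,2)$ with norm $M_1 \lesssim \lambda^{-(n-1)/2}$ yields, for $\theta = 2/q \in [0,1]$, $p_\theta = q'$, $q_\theta = q$, and total norm bounded by $M_0^{1-\theta} M_1^\theta \lesssim \lambda^{-(n-1)/q}$, which is exactly the asserted inequality.

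The core of the argument is therefore the $L^2\to L^2$ estimate, which I would obtain via the $TT^*$ method. The kernel of $S^\lambda (S^\lambda)^*$ is
\[ K_\lambda(x,x') = \int e^{-2\pi i \lambda[\bar\phi(x,y)-\bar\phi(x',y)]}\, \bar a(x,y)\,\overline{\bar a(x',y)} \, dy. \]
Using the fundamental theorem of calculus, the gradient in $y$ of the phase is
\[ \nabla_y[\bar\phi(x,y)-\bar\phi(x',y)] = \left(\int_0^1 \partial^2_{yx}\bar\phi(x'+s(x-x'),y)\, ds\right)(x-x'). \]
The hypothesis $\det\partial^2_{xy}\bar\phi > 0$, together with compactness of the support, provides a uniform lower bound $|\det \partial^2_{xy}\bar\phi| \gtrsim 1$, and hence an averaged matrix bound
\[ \bigl|\nabla_y[\bar\phi(x,y)-\bar\phi(x',y)]\bigr| \gtrsim |x-x'|. \]
Standard iterated integration by parts with the non-stationary phase operator $L = (2\pi i \lambda)^{-1}\,|\nabla_y\Psi|^{-2}\,\nabla_y\Psi\cdot \nabla_y$, where $\Psi = \bar\phi(x,\cdot)-\bar\phi(x',\cdot)$, then gives
\[ |K_\lambda(x,x')| \lesssim_N (1+\lambda|x-x'|)^{-N} \]
for any $N$, provided the derivatives of $\bar a$ up to order $N-1$ are controlled (which is the content of \eqref{sh2}). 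Schur's test then yields
\[ \|S^\lambda(S^\lambda)^*\|_{L^2\to L^2} \lesssim \sup_x \int (1+\lambda|x-x'|)^{-N}\,dx' \lesssim \lambda^{-(n-1)}, \]
so $\|S^\lambda\|_{L^2\to L^2} \lesssim \lambda^{-(n-1)/2}$, completing the non-trivial endpoint.

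The main delicate point is the transition from the algebraic assumption $\det\partial^2_{xy}\bar\phi>0$ to the quantitative lower bound $|\nabla_y\Psi| \gtrsim |x-x'|$, which requires a careful application of the matrix-valued mean value identity together with compactness of the support to obtain uniformity; after that, the iterated integration by parts and Schur's test are routine. Everything else (the trivial endpoint and Riesz--Thorin) is essentially bookkeeping, so this lemma reduces to one genuinely analytic input, namely non-stationary phase applied to the $TT^*$ kernel.
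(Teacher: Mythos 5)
This lemma is quoted directly from H\"ormander \cite[Theorem 1.1]{H}; the paper supplies no proof of its own. Your argument --- trivial $L^1\to L^\infty$ bound, $TT^*$ plus non-stationary phase for the $\lambda^{-(n-1)/2}$ gain on $L^2$, then Riesz--Thorin --- is precisely the standard proof of that cited result, and the interpolation bookkeeping is correct. The only point worth tightening is the step from $\det\partial^2_{xy}\bar\phi\neq 0$ to $\bigl|\nabla_y[\bar\phi(x,\cdot)-\bar\phi(x',\cdot)]\bigr|\gtrsim|x-x'|$: invertibility of each $\partial^2_{xy}\bar\phi(z,y)$ does not by itself make the averaged matrix $\int_0^1\partial^2_{yx}\bar\phi(x'+s(x-x'),y)\,ds$ invertible (an average of invertible matrices need not be invertible), so one should first partition the support into finitely many pieces on which the mixed Hessian is within $\epsilon$ of a fixed invertible matrix; this is routine by compactness and you correctly flag it as the delicate point.
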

\begin{proof} [Proof of Lemma \ref{essential constant}]
    Define $S^\lambda g$ by
    \[S^\lambda g(u) := T^\lambda g(\lambda u,t)\]
    Since $T^\lambda$ satisfies \textbf{(H1)} and assumption $\eqref{assumpetion 1}$, conditions $\eqref{sh1}$ and $\eqref{sh2}$ are satisfied. Consequently, Lemma \ref{General Hausdorﬀ–Young} yields
    \[ \| T^\lambda g\|_{L^q(\mathbb{R}^{n-1} \times \{t\})}^q = \lambda^{n-1} \|S^\lambda g\|_{L^q(\mathbb{R}^{n-1})} \lesssim_{\phi,q} \|g\|_{L^{q'}(\mathbb{R}^{n-1})}\]
    The proof is then completed by an application of H\"older's inequality.
\end{proof}
\bigskip
\subsection{Mixed Norm}
Since the $L^2$ behavior of wave packets is significantly simpler to analyze, one can reduce Proposition \ref{main theorem type 1} to the following via a real interpolation of strict-type. Similar reductions can be found in \cite{G,Wang}.
\begin{proposition}\label{reduction 2}
Suppose $T^{\lambda}$ is an oscillatory integral operator satisfying the cinematic curvature condition of type $\bfone$. Assume \eqref{assumpetion 1} and \eqref{assumption2},then for all $p$ satisfies \eqref{range of result} and $r$ satisfies \eqref{rrange}, the estimate
\begin{align*}
    \|T^{\lambda}f\|_{L^p(B_R^n(0))} \lesssim_{\varepsilon,p,\phi}R^\varepsilon \|f\|_{L^{2}(\mathbb{R}^{n-1})}^\frac{2}{r} \max_{\theta \in \Theta_{R^{-1/2}}}\|f\|_{L^{2}_\mathrm{avg}(3\tilde{\theta})}^{1-\frac{2}{r}}\\
    \|f\|_{L^{2}_\mathrm{avg}(3\tilde{\theta})}:= \left(\frac{1}{|3\tilde{\theta}|}\int_{3\tilde{\theta}} |f|^2\right)^{1/2}
\end{align*}
holds uniformly for $f \in \mathcal{S}(\mathbb{R}^{n-1})$ with $\supp f \subset \mathbb{A}$, $1 \leq R \leq \lambda^{1-\varepsilon}$ and $\varepsilon > 0$.
\end{proposition}

 We now present two simple but useful results concerning $\max_{\theta \in \Theta_{R^{-1/2}}}\|f\|_{L_\mathrm{avg}^2(3\tilde{\theta})}$. The first is as follows:
\begin{lemma} \label{norm_lower_bound}
    Let $f \in \mathcal{S}(\mathbb{R}^{n-1})$ be supported in $\mathbb{A}$. Suppose there exists a nonempty collection $\mathbb{T} \subset \mathcal{T}$ such that
    \begin{equation*}
        \|f_T\|_2 \sim u \qquad \text{for all $T \in \mathcal{T}$}
    \end{equation*}
    then for all $\gamma \in [0,1]$ we have
    \[\max_{\theta \in \Theta_{R^{-1/2}}}\|f\|_{L_\mathrm{avg}^2(3\tilde{\theta})}^2 +\rap(R)\|f\|_2^2 \gtrsim u^2 R^{\frac{n-2}2(1-\gamma)}|\mathbb{T}|^\gamma\]
\end{lemma}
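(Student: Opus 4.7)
The plan is to pass from the fixed $L^2$-size of the wave packets in $\mathbb{T}$ to a lower bound for $\|f\|_{L^2(3\tilde{\theta})}$ on a single cap, and then to pigeonhole over the caps $\theta \in \Theta_{R^{-1/2}}$. To this end, partition $\mathbb{T} = \bigsqcup_{\theta} \mathbb{T}_\theta$ with $\mathbb{T}_\theta := \mathbb{T} \cap \mathcal{T}_\theta$, and set $g_\theta := \sum_{T \in \mathbb{T}_\theta} f_T$.

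First I would apply Lemma~\ref{l2-orthogonalty} to the subcollection $\mathbb{T}_\theta$ to obtain
$$\|g_\theta\|_2^2 \;\sim\; \sum_{T \in \mathbb{T}_\theta} \|f_T\|_2^2 \;\sim\; u^2 |\mathbb{T}_\theta|.$$
Next I would bound $\|g_\theta\|_2$ from above by $\|f\|_{L^2(3\tilde{\theta})}$. Writing $g_\theta = \psi_{\tilde{\theta}} \sum_{T^\flat \in \mathbb{T}_\theta^\flat} (\widecheck{\eta_{T^\flat}} * f_\theta)$, applying Plancherel, and using that $\{\eta_{T^\flat}\}_{T^\flat \in \mathcal{T}_\theta^\flat}$ is a bounded-overlap partition of unity on $\R^{n-1}$, I get $\|g_\theta\|_2 \lesssim \|f_\theta\|_2$. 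Since $f_\theta = \tilde{\psi}_\theta f$ is supported inside $3\tilde{\theta}$, this gives $\|g_\theta\|_2 \lesssim \|f\|_{L^2(3\tilde{\theta})}$. Combining with the orthogonality identity above and using $|3\tilde{\theta}| \sim R^{-(n-2)/2}$,
$$u^2 |\mathbb{T}_\theta| \;\lesssim\; \|f\|_{L^2(3\tilde{\theta})}^2 \;\sim\; R^{-\frac{n-2}{2}} \|f\|_{L^2_{\mathrm{avg}}(3\tilde{\theta})}^2.$$

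To conclude I would pigeonhole over $\theta$. Since $|\Theta_{R^{-1/2}}| \sim R^{(n-2)/2}$ and $\mathbb{T}$ is nonempty, the elementary inequality $\max(1,x) \geq x^{1/2}$ applied with $x = |\mathbb{T}|/|\Theta_{R^{-1/2}}|$ gives
$$\max_\theta |\mathbb{T}_\theta| \;\geq\; \max\!\Bigl(1,\; \frac{|\mathbb{T}|}{R^{(n-2)/2}}\Bigr) \;\geq\; R^{-\frac{n-2}{4}}\,|\mathbb{T}|^{1/2}.$$
Inserting this into the previous display delivers the claimed bound $\max_\theta \|f\|_{L^2_{\mathrm{avg}}(3\tilde{\theta})}^2 \gtrsim u^2 R^{\frac{n-2}{4}} |\mathbb{T}|^{1/2}$. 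The $\mathrm{RapDec}(R)\|f\|_2^2$ term on the left-hand side is a safety buffer absorbing the Schwartz-tail error coming from the wave-packet decomposition $f = \sum_T f_T + \mathrm{RapDec}(R)\|f\|_2$.

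I do not expect a substantive obstacle here. The only step that requires any care is the upper bound $\|g_\theta\|_2 \lesssim \|f\|_{L^2(3\tilde{\theta})}$, which combines Plancherel applied to the frequency-partition $\{\eta_{T^\flat}\}$ with the spatial localization provided by the cutoffs $\tilde{\psi}_\theta$ and $\psi_{\tilde{\theta}}$. Everything else is $L^2$-orthogonality followed by a straightforward two-regime pigeonhole (captured uniformly by $\max(1,x)\geq x^{1/2}$).
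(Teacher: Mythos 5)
Your proof is correct and follows essentially the same route as the paper's: pigeonhole to locate a cap $\theta'$ with $|\mathbb{T}\cap\mathcal{T}_{\theta'}|\gtrsim \max\{1,|\mathbb{T}|R^{-(n-2)/2}\}\geq|\mathbb{T}|^{1/2}R^{-(n-2)/4}$, invoke $L^2$-orthogonality of the wave packets (Lemma \ref{l2-orthogonalty}), and convert $\|f_{\theta'}\|_2^2\gtrsim u^2|\mathbb{T}\cap\mathcal{T}_{\theta'}|$ into a lower bound for $\|f\|_{L^2_{\mathrm{avg}}(3\tilde{\theta}')}^2$ using $|3\tilde{\theta}'|\sim R^{-(n-2)/2}$. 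The only cosmetic difference is that you introduce the intermediate function $g_\theta$ and pass through $\|g_\theta\|_2\lesssim\|f_\theta\|_2$ via Plancherel on the frequency partition $\{\eta_{T^\flat}\}$, whereas the paper lower-bounds $\|f_{\theta'}\|_2^2$ directly from Lemma \ref{l2-orthogonalty}; both handle the Schwartz-tail error identically via the $\mathrm{RapDec}(R)\|f\|_2^2$ buffer.
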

\begin{proof}
Define $\theta' := \operatorname*{arg\,max}_{\theta \in \Theta_{R^{=1/2}}} |\mathbb{T} \cap \mathcal{T}_\theta|$. Since $\mathbb{T}$ is nonempty we have
\begin{equation} \label{l3.5lb}
    |\mathbb{T} \cap \mathcal{T}_{\theta'}| \gtrsim \max \{1,|\mathbb{T}|R^{-\frac{n-2}{2}}\} \geq \ |\mathbb{T}|^\gamma R^{-\gamma\frac{n-2}2}
\end{equation}
Because $\Theta_{R^{-1/2}}$ is finite-overlapping and $f_\theta$ is supported in $3\tilde{\theta}$, Plancherel theorem gives
\begin{multline*}
    \max_{\theta \in \Theta_{R^{-1/2}}}\|f\|_{L_\mathrm{avg}^2(3\tilde{\theta})}^2 \geq \max_{\theta \in \Theta_{R^{-1/2}}}\|f_{\theta}\|_{L_\mathrm{avg}^2(3\tilde{\theta})}^2 \gtrsim R^\frac{n-2}{2}\|f_{\theta'}\|_{L^2(\mathbb{R}^{n-1})}^2 \gtrsim \\ u^2 R^\frac{n-2}{2} |\mathbb{T} \cap \mathcal{T}_{\theta'}| +\rap(R)\|f\|_2^2
\end{multline*}
We can now conclude the proof combining this and \eqref{l3.5lb}.
\end{proof}
\medskip
The second one is useful in the induction-on-scale arguments.
\begin{lemma} \label{norm_l2_local}
    Let $f \in \mathcal{S}(\mathbb{R}^{n-1})$ be supported in $\mathbb{A}$. Suppose $\mathbb{T} \subset \mathcal{T}$ and let $ g:=\sum_{T \in \mathbb{T}}f_T $, then
    \[\max_{\theta \in \Theta_{R^{-1/2}}}\|g\|_{L_\mathrm{avg}^2(3\tilde{\theta})}^2 \lesssim \max_{\theta \in \Theta_{R^{-1/2}}}\|f\|_{L_\mathrm{avg}^2(3\tilde{\theta})}^2 + \rap(R)\|f\|_2^2\]
\end{lemma}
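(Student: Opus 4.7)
The plan is to exploit the $\xi$-support localization of the wave packets: each $f_T = (\widecheck{\eta_{T^\flat}}*f_\theta)\,\psi_{\tilde{\theta}}$ with $T \in \mathcal{T}_\theta$ is literally supported in $\tilde{3\theta}$, since $\supp \psi_{\tilde{\theta}} \subset \tilde{3\theta}$. Hence when we integrate $|g|^2$ over $3\tilde{\theta}_0$, only wave packets indexed by caps $\theta$ close to $\theta_0$ can contribute, and because $\Theta_{R^{-1/2}}$ is finitely overlapping there are only $O_n(1)$ such caps. The lemma will follow by combining this angular localization with the wave-packet $L^2$-orthogonality of Lemma~\ref{l2-orthogonalty}.

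To implement this, I would fix $\theta_0 \in \Theta_{R^{-1/2}}$ and group the packets by cap, setting $g_\theta := \sum_{T \in \mathbb{T}\cap\mathcal{T}_\theta} f_T$; then $g = \sum_\theta g_\theta$ with $\supp g_\theta \subset \tilde{3\theta}$. Letting $\mathcal{N}(\theta_0) := \{\theta \in \Theta_{R^{-1/2}} : \tilde{3\theta}\cap 3\tilde{\theta}_0 \neq \emptyset\}$, so that $|\mathcal{N}(\theta_0)|\lesssim_n 1$, the triangle inequality gives
\[
\|g\|_{L^2(3\tilde{\theta}_0)}^2 \;\lesssim\; \sum_{\theta\in\mathcal{N}(\theta_0)} \|g_\theta\|_{L^2(\R^{n-1})}^2.
\]
Next, I invoke Lemma~\ref{l2-orthogonalty} twice, first to $\mathcal{T}' = \mathbb{T}\cap\mathcal{T}_\theta$ and then to $\mathcal{T}' = \mathcal{T}_\theta$, to get
\[
\|g_\theta\|_2^2 \;\sim\; \sum_{T\in\mathbb{T}\cap\mathcal{T}_\theta}\|f_T\|_2^2 \;\le\; \sum_{T\in\mathcal{T}_\theta}\|f_T\|_2^2 \;\sim\; \Big\|\sum_{T\in\mathcal{T}_\theta}f_T\Big\|_2^2.
\]
Because $\{\eta_{T^\flat}\}$ is a partition of unity and $\psi_{\tilde{\theta}}\equiv 1$ on $\tilde{2\theta}\supset\supp f_\theta$, we have $\sum_{T\in\mathcal{T}_\theta} f_T = f_\theta + \rap(R)\|f\|_2$. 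Since $f_\theta = \tilde{\psi}_\theta f$ with $0\le\tilde{\psi}_\theta\le 1$ and $\supp\tilde{\psi}_\theta \subset \tilde{\theta} \subset 3\tilde{\theta}$, this gives
\[
\|g_\theta\|_2^2 \;\lesssim\; \|f\|_{L^2(3\tilde{\theta})}^2 + \rap(R)\|f\|_2^2.
\]

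To conclude, I would sum over the $O_n(1)$ caps in $\mathcal{N}(\theta_0)$, divide through by $|3\tilde{\theta}_0|$ (which equals $|3\tilde{\theta}|$ for every $\theta$), and take the maximum over $\theta_0\in\Theta_{R^{-1/2}}$; the resulting $\rap(R)/|3\tilde{\theta}_0|$ factor is absorbed into $\rap(R)\|f\|_2^2$ since $|3\tilde{\theta}_0|$ is merely polynomial in $R$. I do not anticipate any serious obstacle: the lemma is a bookkeeping consequence of the built-in frequency localization of the wave packets together with Lemma~\ref{l2-orthogonalty}. The only points that require minor care are verifying the $O_n(1)$-overlap of the caps in $\mathcal{N}(\theta_0)$ and confirming that $\sum_{T\in\mathcal{T}_\theta}f_T$ reproduces $f_\theta$ up to rapidly decaying tails, both of which are immediate from the construction of the wave packet decomposition.
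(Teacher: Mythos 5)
Your proposal is correct and follows essentially the same route as the paper's proof: both exploit the frequency-support localization $\supp f_T \subset 3\tilde\theta$ built into the wave packet construction, invoke Lemma~\ref{l2-orthogonalty} to pass from $\|\sum f_T\|_2^2$ to $\sum\|f_T\|_2^2$ and back (first for a subset, then for the full cap), and finish using the $O(1)$-overlap of $\Theta_{R^{-1/2}}$. The only cosmetic difference is that the paper bounds by $\|f\|_{L^2(9\tilde\theta)}^2$ and then pigeonholes to a single neighboring cap, whereas you sum directly over the $O(1)$ caps in $\mathcal{N}(\theta_0)$ before taking the maximum; these are interchangeable ways of using the same finite-overlap fact.
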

\begin{proof}
    Fix $\theta \in \Theta_{R^{-1/2}}$. Now Lemma \ref{l2-orthogonalty} gives
\begin{multline}\label{hahaha}
    \|g\|_{L^2(3\tilde{\theta})}^2 = \|\sum_{T \in \mathcal{T}, 3\theta(T) \cap 3\theta \neq \emptyset}f_T\|_{L^2(\mathbb{R}^{n-1})}^2 \lesssim \\ \|\sum_{3\theta' \cap 3\theta \neq \emptyset}f_{\theta'}\|_{L^2(\mathbb{R}^{n-1})}^2 +\rap(R)\|f\|_2^2
\end{multline}
Since $\Theta_{R^{-1/2}}$ is finite-overlapping and $f_{\theta'}$ is supported in $3\tilde{\theta'}$, we have
\[\|\sum_{3\theta' \cap 3\theta \neq \emptyset}f_{\theta'}\|_{L^2(\mathbb{R}^{n-1})}^2 \lesssim \|f\|_{L^2(9\tilde{\theta})}^2\]
By pigeonholing, there exists $\theta' \in \Theta_{R^{-1/2}}$ such that
\[\|\sum_{3\theta' \cap 3\theta \neq \emptyset}f_{\theta'}\|_{L^2(\mathbb{R}^{n-1})}^2 \lesssim \|f\|_{L^2(3\tilde{\theta'})}^2\]
Combining this with \eqref{hahaha}, we complete the proof.. 
\end{proof}
\bigskip

\subsection{Lorentz rescaling}
\label{Lorentz-rescaling-section}
Let $T^{\lambda}$ be an oscillatory integral operator satisfying the cinematic
curvature condition of type $\mathbf{1}$. Assume \eqref{assumpetion 1} and \eqref{assumption2}. Suppose $g \in \mathcal{S}(\mathbb{R}^{n-1})$ with
$\supp g \subset \tau$ for some $\tau \in \Theta_{K^{-1}}$. To estimate
$\|T^\lambda g\|_{L^p(B_R^n)}$ via induction on scales, we perform a rescaling
under which $T^\lambda g$ is transformed into
$\widetilde{T}^{\lambda/K^2}\widetilde{g}$. Here
$\widetilde{T}^{\lambda/K^2}$ is an oscillatory integral operator satisfying the cinematic curvature condition, and $\widetilde{g}$ is a function supported in $\mathbb{A}$. Under this rescaling, the scale of the integration domain is reduced from $R$ to $R/K^2$, and thus we can apply induction-on-scale. 

We now state the main result of this subsection in the form of a lemma. The proof follows the framework developed in \cite[Section 2.5]{BHS} and \cite[Section 5.1]{GW}. 

\begin{lemma}\label{rescaling}
Let $n \geq 3$ and $2 < r \leq p$. Suppose there exists a constant $C>0$ such that, for every function $\widetilde g$ with $\supp\ \widetilde g \subset \mathbb{A}$ and every oscillatory integral operator $T^{\lambda/K^2}$ satisfying the cinematic curvature condition of type $\mathbf{1}$, the estimate
\begin{equation}\label{resc}
\|T^{\lambda/K^2} \widetilde g\|_{L^p(B_{R/K^2}^n)}
\le
C\,\|\widetilde g\|_2^\frac2r
\max_{\theta \in \Theta_{K R^{-1/2}}}
\|\widetilde g\|_{L^2_{\mathrm{avg}}(3\tilde{\theta})}^{1-\frac2r}
+ \rap(R/K^2)\,\|\widetilde g\|_2
\end{equation}
holds whenever $a$ satisfies \eqref{assumpetion 1}.

Then, for every function $g \in \mathcal{S}(\mathbb{R}^{n-1})$ with $\supp\ g \subset \tau \in \Theta_{K^{-1}}$, and every oscillatory integral operator $T^\lambda$ satisfying the cinematic curvature condition of type $\mathbf{1}$, the estimate 
\begin{equation*}
\|T^\lambda g\|_{L^p(B_R^n(0))}
\lesssim_{\phi}
C\,K^{\frac{n}{p}-(n-2)(1-\frac1r)}\,
\|g\|_2^\frac2r
\max_{\theta \in \Theta_{R^{-1/2}}}
\|g\|_{L^2_{\mathrm{avg}}(3\tilde{\theta})}^{1 - \frac2r}
+ \rap(R)\,\|g\|_2.
\end{equation*}
holds whenever \eqref{assumpetion 1} and \eqref{assumption2} is satisfied. 
\end{lemma}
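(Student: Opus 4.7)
The approach is the standard Lorentz rescaling procedure adapted to the cinematic curvature setting, following the framework of \cite[Section 2.5]{BHS} and \cite[Section 5.1]{GW}. The plan is to transform $T^\lambda g$, with $g$ supported in the single cap $\tau$ of diameter $K^{-1}$, into a full-frequency operator $\tilde T^{\lambda/K^2}\tilde g$ at the smaller scale $\lambda/K^2$, apply the hypothesis \eqref{resc} to the rescaled operator, and translate the resulting bound back through the various change-of-variable factors.

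The first step is to introduce the frequency substitution $\xi_{n-1} = \tilde\xi_{n-1}$, $\xi' = \xi_{n-1}(\omega_\tau + K^{-1}\tilde\xi')$, where $\omega_\tau \in \mathbb{R}^{n-2}$ is the angular center of $\tau$. This sends $\tau$ to $\mathbb{A}$, produces a frequency Jacobian of size $K^{-(n-2)}$, and defines $\tilde g(\tilde\xi) := g(\xi(\tilde\xi))$, which is supported in $\mathbb{A}$. The second step is to introduce an anisotropic linear change of spatial coordinates $\tilde x = X(x)$, scaling one distinguished direction by $K^{-2}$ and the remaining $n-1$ directions by $K^{-1}$, chosen so that, after absorbing the linear-in-$\tilde\xi$ part of the transformed phase as a modulation $e^{-2\pi i L(x)}$, one obtains a pointwise identity of the form
\[
T^\lambda g(x) = K^{-(n-2)}\, e^{-2\pi i L(x)}\, \tilde T^{\lambda/K^2}\tilde g(\tilde x),
\]
where $\tilde T^{\lambda/K^2}$ has phase $\tilde\phi^{\lambda/K^2}$ and amplitude $\tilde a$ obtained from $\phi$ and $a$ via the chain rule. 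A short computation also shows that an $R^{-1/2}$-cap $\tilde\theta$ in the original frequency space corresponds to a $KR^{-1/2}$-cap in the rescaled frequency space, and that the $L^2_{\mathrm{avg}}$ norms on matching caps are equal.

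The main obstacle is to verify that the rescaled phase $\tilde\phi$ satisfies the cinematic curvature conditions \textbf{(H1)}--\textbf{(H3)} of type $\mathbf{1}$ with constants independent of $\tau$. Homogeneity \textbf{(H1)} is immediate by construction, and \textbf{(H2)} follows from a direct computation of $\partial^2_{\tilde\xi\tilde x'}\tilde\phi$ using the chain rule and the substitution formulas. The delicate point is \textbf{(H3)}: one must compute the new Gauss map under the anisotropic rescaling $X$ and verify that the Hessian $\partial^2_{\tilde\xi\tilde\xi}\langle \partial_{\tilde x}\tilde\phi, \tilde G\rangle$ retains its rank-$(n-2)$ positive-definite structure with eigenvalues of size one. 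This verification is more subtle than the analogue in \cite{BHS,GW}, because the cinematic curvature condition imposes a stronger structural constraint than the cone condition used there. The assumption $K \le R^{\varepsilon^{50}}$ provides the slack needed to keep all Hessian perturbations within the $\cpar$ tolerance required for type $\mathbf{1}$.

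To finish, I integrate $|T^\lambda g|^p$ over $B_R^n(0)$. Because $g$ is supported in a $K^{-1}$-cap, $T^\lambda g$ is essentially supported in a thin slab inside $B_R^n$ whose image under $X$ fits inside $B_{R/K^2}^n$, with a $\rap(R)$ error on the complement; the hypothesis \eqref{resc} therefore applies after a single localization rather than after a multi-ball covering. Combining the pointwise factor $K^{-(n-2)p}$, the spatial Jacobian, the frequency relation $\|\tilde g\|_2^2 = K^{n-2}\|g\|_2^2$, and the equality of $L^2_{\mathrm{avg}}$ norms on matching caps, and collecting powers of $K$, yields the claimed factor $K^{2-(n-2)(p-2)}$; the $\rap$ error transfers without modification.
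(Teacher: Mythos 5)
Your rescaling formulas and the power-of-$K$ accounting are consistent, and the discussion of why the rescaled phase should remain of type $\mathbf{1}$ is in the right spirit. The gap is in the final step, where you claim that ``$T^\lambda g$ is essentially supported in a thin slab inside $B_R^n$ whose image under $X$ fits inside $B_{R/K^2}^n$.'' This is false for general $g$ supported in the cap $\tau$: the frequency localization to $\tau$ says nothing about the spatial localization of $g$, so the wave packets of $g$ at scale $R$ can have base points spread over all of $B_R^{n-1}$, and correspondingly $T^\lambda g$ is spread over essentially all of $B_R^n$. Since the Lorentz map $X = D_K^{-1}\circ(\Upsilon^{\lambda}_{\omega_\tau})^{-1}$ scales $x''$ by $K^{-1}$, leaves $x_{n-1}$ unchanged, and scales $x_n$ by $K^{-2}$, the image $X(B_R^n)$ is a box of dimensions roughly $RK^{-1}\times R\times RK^{-2}$, which contains on the order of $K^{n}$ many $R/K^2$-balls. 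Applying the hypothesis \eqref{resc} to each of these balls with the same $\tilde g$ would introduce an extra factor $K^{n}$ and destroy the desired bound.

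The paper handles this by first covering $B_R^n$ with curved planks $\Box=\Gamma_{\Box^\flat}(\xi_\tau;R)$ whose bases $\Box^\flat$ have dimensions $RK^{-2}\times RK^{-1}\times\cdots\times RK^{-1}$, and partitioning the wave packets of $g$ into collections $\mathcal{T}_\Box$ so that $g=\sum_\Box g_\Box + \rap(R)\|g\|_2$ with $T^\lambda g_\Box$ essentially supported on $\Box$. The point of choosing $\Box$ of these dimensions is exactly that $(\Upsilon^\lambda_{\omega_\tau}\circ D_K)^{-1}(\Box)$ \emph{is} an $R/K^2$-ball, so the hypothesis \eqref{resc} applies once per $\Box$ to $\tilde g_\Box$. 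One then sums over $\Box$ using the almost-orthogonality $\sum_\Box\|g_\Box\|_2^2\lesssim\|g\|_2^2$ (Lemma \ref{l2-orthogonalty}) together with Lemma \ref{norm_l2_local} to control the $L^2_{\mathrm{avg}}$ factor of each $g_\Box$ by that of $g$. Without this $\Box$-decomposition and the accompanying $\ell^2$-summation, the reduction to a single $R/K^2$-ball does not go through, so your proof as written does not close.
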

\begin{proof}
Suppose $T^\lambda$ is an oscillatory integral operator satisfying the cinematic curvature condition of type $\mathbf{1}$. Assume \eqref{assumpetion 1} and \eqref{assumption2}. 

Fix $\tau \in \Theta_{K^{-1}}$. Let $V_{\tau,R}\subset B_R^n$ be an isotropic
$RK^{-2}$-dilate of $\widetilde{\tau}$; thus $V_{\tau,R}$ is a box of dimensions
$RK^{-2}\times RK^{-1}\times \cdots \times RK^{-1}$. We tile $\mathbb{R}^{n-1}$
by finitely overlapping translates of $V_{\tau,R}$, and denote the resulting
collection by $\mathcal{B}_\tau^\flat=\{\Box^\flat\}$. For each
$\Box^\flat \in \mathcal{B}_\tau^\flat$, we define a curved plank
\begin{equation*}
    \Box:=\Ga_{\Box^\flat}(\xi_\tau;R)=\{(\ga^\la(v,t;\xi_\tau),t;\xi_\tau):v\in \Box^\flat,|t|\le R \}. 
\end{equation*}
Let $\mathcal{B}_\tau := \{\Box : \Box^\flat \in \mathcal{B}_\tau^\flat\}$.
Observe that $\mathcal{B}_\tau$ forms a covering of $\mathbb{R}^n$, and that
each
\[
T \in \bigcup_{\theta \in \Theta_{R^{-1/2}},\, 2\theta \cap \tau \neq \emptyset}
\mathcal{T}_\theta
\]
is contained in $O(1)$ many boxes $\Box \in \mathcal{B}_\tau$. We assign each
such $T$ to a single box $\Box$ containing it, and denote by $\mathcal{T}_\Box$
the collection of wave packets assigned to $\Box$. This procedure yields a
partition:
\begin{equation*}
    \bigcup_{\theta \in \Theta_{R^{-1/2}}, 2\theta\cap \tau \neq \emptyset}\mathcal{T}_\theta=\bigcup_{\Box}\mathcal{T}_\Box.
\end{equation*}
Let $g \in \mathcal{S}(\mathbb{R}^{n-1})$ with $\supp\ g \subset \tau$, then
\[T^\la g=\sum_{\theta \in \Theta_{R^{-1/2}},2\theta\cap \tau \neq \emptyset}T^\la g_\theta=\sum_\Box T^\la g_\Box + \rap(R)\,\|g\|_2\]
where 
\[g_\Box := \sum_{T\in\mathcal{T}_\Box} g_T\]
Since the boxes in $\B_\tau$have finite overlap, we have
\begin{equation}\label{rescplugin}
    \|T^\la g\|_{L^p(B_R^n(0))}^p\lesssim \sum_{\Box} \|T^\la g_\Box\|_{L^p(\Box)}^p + \rap(R)\,\|g\|_2^p. 
\end{equation} 
We now apply Lorentz rescaling to each $g_\Box$. Define $\tilde{g_\Box}$ by
\begin{equation} \label{rescaling3}
    \tilde{g_\Box}(\eta):=K^{-(n-2)}g_\Box(\eta_{n-1}\om_\tau+K^{-1}\eta',\eta_{n-1})
\end{equation}
As demonstrated in the proof of Lemma 2.3 in \cite{BHS}, we have
\begin{equation}\label{rescaling1}
    T^\la g_\Box\circ \Upsilon_{\om_\tau}^\la\circ D_K= \wt T^{\la/K^2}\tilde{g_\Box}
\end{equation}
where $\Upsilon^\la_{\om_\tau}(u,x_n):= (\la \ga(u/\lambda,x_n/\lambda;\om_\tau,1),x_n)$,$D_K(x'',x_{n-1},x_n):=(Kx'',x_{n-1},K^2x_n)$ and
\[\wt T^{\la/K^2}\tilde{g_\Box}(x):=\int_{\R^n}e^{i\wt\phi^{\la/K^2}(x;\eta)}\wt a^\la(x;\eta)\tilde{g_\Box}(\eta)\mathrm{d}\eta.\]
Here the phase function $\wt \phi$ and the amplitude function $\wt a(x;\eta)$ are given by
\[\wt \phi(x;\eta) := \langle x',\eta\rangle+ \int_0^1 (1-r)\langle \partial^2_{\xi'\xi'}\phi(\Upsilon_{\om_\tau}(D'_{K^{-1}}x',x_n);\eta_{n-1}\om_\tau +rK^{-1}\eta',\eta_{n-1})\eta',\eta'\rangle\mathrm{d} r\]
\[\wt a(x;\eta):= a(\Upsilon_{\om_\tau}(D'_{K^{-1}}x';x_n);\eta_{n-1}\om_\tau+K^{-1}\eta',\eta_{n-1})\]
where
\[D'_{K^{-1}}(x'',x_{n-1}):=(K^{-1}x'',K^{-2}x_{n-1})\]
\[\Upsilon_{\om_\tau}(u,x_n):= (\ga(u,x_n;\om_\tau,1),x_n)\]
For each $\Box \in \mathcal{B}_\tau$, $(\Upsilon^\lambda_{\omega_\tau} \circ D_K)^{-1}(\Box)$ is a $R/K^2$-ball. We denote this ball with $B^n_{R/K^2,\Box}$. By \eqref{rescaling1}, we can now rewrite \eqref{rescplugin} as
\begin{equation}\label{rescaling2}
    \|T^\la g\|_{L^p(B_R^n(0))}^p\lesssim K^n \sum_{\Box} \|\wt T^{\la/K^2}\tilde{g_\Box}\|_{L^p(B^n_{R/K^2,\Box})}^p + \rap(R)\,\|g\|_2^p. 
\end{equation} 

From \cite[Section 2.5]{BHS}, it follows that the rescaled operator $\wt T^{\la/K^2}$ is of type $(1,1,C_\phi)$, where $C_\phi$ is an absolute constant depending only on $\phi$. Moreover, the rescaled amplitude function$\tilde{a}$ continues to satisfy \eqref{assumpetion 1} up to a constant depending only on $\phi$. After suitable rescaling and normalization, we may apply the induction hypothesis to bound $\|\wt T^{\la/K^2}\tilde{g_\Box}\|_{L^p(B^n_{R/K^2,\Box})}^p$ for each $\Box \in \mathcal{B}_\tau$
\begin{multline}\label{rescaling4}
     \|\wt T^{\la/K^2}\tilde{g_\Box}\|_{L^p(B^n_{R/K^2,\Box})}^p \lesssim_\phi C\,\|\tilde{g_\Box}\|_2^\frac{2p}{r}
\max_{\theta \in \Theta_{K R^{-1/2}}}
\|\tilde{g_\Box}\|_{L^2_{\mathrm{avg}}(3\tilde{\theta})}^{\,p-\frac{2p}{r}} \\
+ \rap(R/K^2)\,\|\tilde{g_\Box}\|_2^p
\end{multline}
By \eqref{rescaling3} and Lemma \ref{norm_l2_local},
\begin{equation} \label{rescaling5}
    \|\tilde{g_\Box}\|_2^2 = K^{-(n-2)} \|g_\Box\|_2^2
\end{equation}
\begin{equation} \label{rescaling6}
\max_{\theta \in \Theta_{K R^{-1/2}}} \|\tilde{g_\Box}\|_{L^2_{\mathrm{avg}}(3\tilde{\theta})}^2 \leq K^{-2(n-2)}\max_{\theta \in \Theta_{R^{-1/2}}}
\|g\|_{L^2_{\mathrm{avg}}(3\tilde{\theta})}^2
\end{equation}
Combining \eqref{rescaling2}, \eqref{rescaling4}, \eqref{rescaling5} and \eqref{rescaling6}, we see that
\[\|T^\la g\|_{L^p(B_R^n(0))}^p\lesssim_\phi K^{n-(n-2)(p-\frac pr)} \max_{\theta \in \Theta_{R^{-1/2}}}
\|g\|_{L^2_{\mathrm{avg}}(3\tilde{\theta})}^{p-\frac{2p}{r}}\sum_{\Box} \|g_\Box\|_2^\frac{2p}{r} + \rap(R)\,\|g\|_2^p. \]
Since $r \leq q$, we can now conclude the proof invoking Lemma \ref{l2-orthogonalty}. 
\end{proof}

\bigskip
\subsection{Simplification of phase function} From \cite[Proof of Lemma 5.4]{GW}, one can reduce Proposition \ref{reduction 2} to the case
\begin{equation} \label{assumption 3}
    \phi(x;\xi) = \langle x',\xi\rangle + x_n\frac{h(\xi')}{\xi_{n-1}} + \cE(x;\xi'/\xi_{n-1})\xi_{n-1}
\end{equation}
where $h$ is quadratic and the error term is bounded by 
\begin{equation} \label{assumption 4}
\cE(x;\om) \lesssim |x_n| |\om|^3+|x|^2|\om|^2
\end{equation}
Formally, 
\begin{proposition}\label{redution 3}
Suppose $T^{\lambda}$ is an oscillatory integral operator satisfying the cinematic curvature condition of type $\bfone$. Assume \eqref{assumpetion 1}, \eqref{assumption2}, \eqref{assumption 3} and \eqref{assumption 4}, then for all $p$ satisfies \eqref{range of result} and $r$ satisfies \eqref{rrange}, the estimate
\begin{align*}
        \|T^{\lambda}f\|_{L^p(B_R^n(0))} \lesssim_{\varepsilon,p,\phi}R^\varepsilon \|f\|_{L^{2}(\mathbb{R}^{n-1})}^\frac2r \max_{\theta \in \Theta_{R^{-1/2}}}\|f\|_{L^{2}_\mathrm{avg}(3\tilde{\theta})}^{1-\frac2r}\\
    \|f\|_{L^{2}_\mathrm{avg}(3\tilde{\theta})}:= \left(\frac{1}{|3\tilde{\theta}|}\int_{3\tilde{\theta}} |f|^2\right)^{1/2}
\end{align*}
holds uniformly for $f \in \mathcal{S}(\mathbb{R}^{n-1})$ with $\supp f \subset \mathbb{A}$, $1 \leq R \leq \lambda^{1-\varepsilon}$ and $\varepsilon > 0$.
\end{proposition}
\bigskip
\subsection{Broad Norm}\label{sectionbroadnorm} We recall the definition of the broad norm, which plays a central role in the subsequent decoupling analysis in this article. Related definitions can be found in \cite{BG,GHI,GW,G,OW}.

Let $f \in \mathcal{S}(\mathbb{R}^{n-1})$ with $\supp f \subset \mathbb{A}$. Let $T^{\lambda}$ be an oscillatory integral operator satisfying the cinematic curvature condition of type $\mathbf{1}$. Assume \eqref{assumpetion 1}, \eqref{assumption2}, \eqref{assumption 3} and \eqref{assumption 4}. Suppose $p \geq 2$.

Fix $A\ge 1$ and cover $B_\la^n(0)$ by a finitely overlapping collection $\mathcal{B}$ of balls of radius $K^2$. For each ball $B_{K^2}^n(y) \in \mathcal{B}$ with $y \in B^\la(0)$, define
\begin{equation} \label{defbroadnorm}
    \mu(B_{K^2}^n(y)):=\min_{v_1,\dots,v_A \in S^{n-1}}\max_{\substack{\tau\in \Theta_{K^{-1}}\\ \ang(G_0(y/\lambda;\om_\tau,1),v_i)\gtrsim K^{-1}\text{for all $1\leq i \leq A$}}}\int_{B_{K^2}^n(y)}|T^\la f_\tau|^p
\end{equation}
Given $U\subset B_R^n(0)$, we define the broad norm over $U$ as follows
\[\|T^\la f\|_{\BL^p_A(U)}^p:=\sum_{B_{K^2}^n \in \mathcal{B}}\frac{|B_{K^2}^n \cap U|}{|B_{K^2}^n|}\mu(B_{K^2}^n)\]
    
\smallskip

The broad norm has the following properties; cf. \cite[Section 4]{G2} for proofs.

\begin{lemma}\label{lemtriangle}
Suppose $T^{\lambda}$ is an oscillatory integral operator satisfying the cinematic curvature condition. Let $f_1,f_2:\mathbb{R}^{n-1} \mapsto \mathbb{C}$. Suppose $1\le p<\infty$, $A_1,A_2\ge 1$ and $A=A_1+A_2$, then
    \[\|T^\la (f_1+f_2)\|_{\BL^p_A(U)}\lesssim \|T^\la f_1\|_{\BL^p_{A_1}(U)}+\|T^\la f_2\|_{\BL^p_{A_2}(U)}. \]
\end{lemma}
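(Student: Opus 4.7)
The plan is to prove the inequality ball-by-ball for the localized quantity $\mu$ appearing in \eqref{defbroadnorm}, and then sum over the cover $\mathcal{B}$. Fix a ball $B=B_{K^2}^n(y)\in\mathcal{B}$ and choose $v^{(1)}_1,\dots,v^{(1)}_{A_1}\in S^{n-1}$ nearly attaining the minimum in the definition of $\mu(B)$ for $f_1$ with parameter $A_1$, and similarly $v^{(2)}_1,\dots,v^{(2)}_{A_2}\in S^{n-1}$ for $f_2$ with parameter $A_2$. Concatenate these lists to form $A=A_1+A_2$ unit vectors $v_1,\dots,v_A$. Any cap $\tau\in\Theta_{K^{-1}}$ satisfying the joint condition $\ang(G_0(y/\la;\om_\tau,1),v_i)\gtrsim K^{-1}$ for all $1\le i\le A$ automatically satisfies the corresponding condition for each subfamily, hence $\tau$ is admissible in the maxima appearing in $\mu(B;f_1)$ and $\mu(B;f_2)$ separately.

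Using the linearity $(f_1+f_2)_\tau=f_{1,\tau}+f_{2,\tau}$ of the cap projection together with $|a+b|^p\le 2^{p-1}(|a|^p+|b|^p)$, for every such admissible $\tau$ one has
\[
\int_B|T^\la(f_1+f_2)_\tau|^p\le 2^{p-1}\Big(\int_B|T^\la f_{1,\tau}|^p+\int_B|T^\la f_{2,\tau}|^p\Big).
\]
Taking the maximum over admissible $\tau$ on both sides, and bounding $\max(u+v)\le\max u+\max v$, then using the particular choice $v_1,\dots,v_A$ as an upper bound for the minimum defining $\mu(B;f_1+f_2)$, one obtains the pointwise bound
\[
\mu(B;f_1+f_2)\lesssim \mu(B;f_1)+\mu(B;f_2),
\]
where the two terms on the right are understood with parameters $A_1$ and $A_2$ respectively.

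Multiplying by the weight $|B\cap U|/|B|$ and summing over $B\in\mathcal{B}$ yields
\[
\|T^\la(f_1+f_2)\|_{\BL^p_A(U)}^p\lesssim \|T^\la f_1\|_{\BL^p_{A_1}(U)}^p+\|T^\la f_2\|_{\BL^p_{A_2}(U)}^p,
\]
after which the claimed inequality follows from the subadditivity $(x+y)^{1/p}\le x^{1/p}+y^{1/p}$, valid for all $p\ge 1$ and $x,y\ge 0$ by concavity of $t\mapsto t^{1/p}$ (with value $0$ at the origin). There is no substantive obstacle in the argument; the only conceptual point is the combinatorial merging of the two near-minimizing vector lists, together with the observation that the angular obstruction imposed by the combined list of $A$ vectors is strictly stronger than that imposed by each sublist, so that every $\tau$ admissible for the combined list remains admissible for each piece.
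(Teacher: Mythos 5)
Your argument is correct and is exactly the standard proof of the broad-norm triangle inequality from Guth's work; the paper itself gives no proof but merely cites \cite[Section 4]{G2}, and your ball-by-ball argument — concatenating near-minimizing vector lists for $f_1$ and $f_2$, noting that a $\tau$ admissible for the combined $A$-vector list is a fortiori admissible for each sublist, and then invoking $|a+b|^p\le 2^{p-1}(|a|^p+|b|^p)$ before summing the weighted $\mu(B)$ over $\mathcal{B}$ — is precisely the argument given there.
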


\begin{lemma}\label{lemholder}
Suppose $T^{\lambda}$ is an oscillatory integral operator satisfying the cinematic curvature condition. Let $f:\mathbb{R}^{n-1} \mapsto \mathbb{C}$. Suppose $1\le p,p_1,p_2<\infty$, $0\le \al_1,\al_2\le 1$ satisfy $\al_1+\al_2=1$ and 
    \[\frac1p=\frac{\al_1}{p_1}+\frac{\al_2}{p_2}. \]
Suppose also $A_1,A_2\ge 1$ and $A=A_1+A_2$.
Then,
\[\|T^\la f\|_{\BL^p_A(U)}\lesssim \|T^\la f\|_{\BL^{p_1}_{A_1}(U)}^{\al_1}\|T^\la f\|_{\BL^{p_2}_{A_2}(U)}^{\al_2}.\]
\end{lemma}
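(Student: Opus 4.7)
The proof is a standard two-level Hölder argument: first at the level of a single $K^2$-ball in the local quantity $\mu$, then at the level of summing over the cover $\mathcal{B}$. The key observation is that the exponent identity $\tfrac{1}{p}=\tfrac{\alpha_1}{p_1}+\tfrac{\alpha_2}{p_2}$ is equivalent to $\tfrac{\alpha_1 p}{p_1}+\tfrac{\alpha_2 p}{p_2}=1$, which is exactly the conjugacy relation we need to invoke Hölder both inside the integral defining $\mu$ and in the weighted sum over balls. The transversality hypothesis $A = A_1+A_2$ is chosen precisely so that a single set of $A$ ``forbidden directions'' can simultaneously serve as the minimizer for both the $(p_1,A_1)$- and $(p_2,A_2)$-broad quantities on the same ball.

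My plan is as follows. Fix a ball $B = B_{K^2}^n(y) \in \mathcal{B}$ and, for $j=1,2$, choose vectors $v_1^{(j)},\dots,v_{A_j}^{(j)} \in S^{n-1}$ that realise (up to an arbitrarily small $\eta>0$) the infimum in the definition of the local broad quantity $\mu^{(j)}(B)$ attached to $(p_j,A_j)$. Concatenate these into a single tuple of $A_1+A_2 = A$ unit vectors $\{v_i\}_{i=1}^{A}$. Using this tuple as a competitor in the minimum defining $\mu(B)$ gives
\[
\mu(B) \;\le\; \max_{\tau \in \Theta_{K^{-1}}:\,\angle(G_0(y/\lambda;\omega_\tau,1),v_i)\gtrsim K^{-1}\,\forall i}\,\int_{B}|T^\lambda f_\tau|^p .
\]
Any cap $\tau$ admissible in this maximum is, by construction, also admissible in both of the maxima defining $\mu^{(1)}(B)$ and $\mu^{(2)}(B)$. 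Thus for any such $\tau$ one has $\int_B|T^\lambda f_\tau|^{p_j}\le \mu^{(j)}(B)$ for $j=1,2$, and a pointwise Hölder inequality inside the integral gives
\[
\int_B |T^\lambda f_\tau|^p \;=\; \int_B |T^\lambda f_\tau|^{\alpha_1 p}\,|T^\lambda f_\tau|^{\alpha_2 p}
\;\le\; \Bigl(\int_B |T^\lambda f_\tau|^{p_1}\Bigr)^{\alpha_1 p/p_1}\Bigl(\int_B |T^\lambda f_\tau|^{p_2}\Bigr)^{\alpha_2 p/p_2},
\]
since $\tfrac{\alpha_1 p}{p_1}+\tfrac{\alpha_2 p}{p_2}=1$. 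Taking the maximum over the admissible $\tau$ and invoking the bounds on $\mu^{(j)}(B)$ yields the local estimate
\[
\mu(B) \;\le\; \mu^{(1)}(B)^{\alpha_1 p/p_1}\,\mu^{(2)}(B)^{\alpha_2 p/p_2}.
\]

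For the global step I weight each ball by $w_B := |B\cap U|/|B|$ and sum. Writing $w_B = w_B^{\alpha_1 p/p_1}\,w_B^{\alpha_2 p/p_2}$ and applying the discrete Hölder inequality with exponents $p_1/(\alpha_1 p)$ and $p_2/(\alpha_2 p)$ yields
\[
\sum_{B\in\mathcal{B}} w_B\,\mu(B) \;\le\; \Bigl(\sum_{B\in\mathcal{B}} w_B\,\mu^{(1)}(B)\Bigr)^{\alpha_1 p/p_1}\Bigl(\sum_{B\in\mathcal{B}} w_B\,\mu^{(2)}(B)\Bigr)^{\alpha_2 p/p_2},
\]
which is exactly $\|T^\lambda f\|_{\BL^p_A(U)}^p \le \|T^\lambda f\|_{\BL^{p_1}_{A_1}(U)}^{\alpha_1 p}\,\|T^\lambda f\|_{\BL^{p_2}_{A_2}(U)}^{\alpha_2 p}$. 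Taking $p$-th roots gives the claim. Finally, I let the auxiliary parameter $\eta\to 0$ used when selecting near-minimizers for $\mu^{(j)}(B)$, so no genuine existence issue arises.

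There is no serious obstacle here; the only mild point of care is the bookkeeping of transversality: the concatenated tuple of $A$ directions must legitimately serve as a test family for $\mu(B)$, and any $\tau$ surviving that joint transversality condition must be admissible for both sub-broad quantities. Once this observation is in place, the rest is two applications of Hölder, which is the reason I do not expect any delicate step in this argument.
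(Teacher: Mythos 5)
Your proof is correct and follows essentially the standard argument. The paper itself does not supply a proof of this lemma — it refers to Guth's \emph{Restriction estimates using polynomial partitioning II}, and the argument there is exactly what you carried out: concatenate the (near-)minimizing direction tuples into a single family of $A=A_1+A_2$ directions, observe that any cap $\tau$ transverse to the whole family is transverse to each sub-family so that the local quantities $\mu^{(j)}(B)$ control the corresponding $L^{p_j}$ integrals, apply Hölder pointwise on $B$ with the conjugacy relation $\tfrac{\alpha_1 p}{p_1}+\tfrac{\alpha_2 p}{p_2}=1$, and then apply discrete Hölder over the weighted sum of $K^2$-balls.
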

\bigskip

\subsection{Refined Decoupling.}The refined decoupling inequality first appeared in \cite{GIOW} as a strengthening of the classical decoupling inequality. In \cite[Appendix A]{GW}, the following refined decoupling inequality was established:
\begin{theorem}[\cite{GW}, Theorem A.3]\label{refdecthm}
Let $T^{\lambda}$ be an oscillatory integral operator satisfying the cinematic
curvature condition of type $\mathbf{1}$. Suppose
$1 \le R \le \lambda^{1-\varepsilon}$. Assume that
\eqref{assumpetion 1}, \eqref{assumption2}, \eqref{assumption 3}, and
\eqref{assumption 4} hold. Let $\mathbb{T} \subset \mathcal{T}$ and define
\[g := \sum_{T \in \mathbb{T}} f_T.\]
Let $w_{B_R}$ be a bump function satisfying $w_{B_R}(x)=1$ for
$x \in B_R^n(0)$ and $\supp w_{B_R} \subset B_{2R}^n(0)$. Suppose that $\|T^\lambda f_T\|_{L^q(w_{B_R})}$ are comparable for all
$T \in \mathbb{T}$. Let $Y$ be a union of finitely overlapping $K^2$-balls
intersecting $B_R^n(0)$, each of which intersects at most $M$ tubes in
$\mathbb{T}$. Then, for all $2 \le q \le \frac{2n}{n-2}$ and all
$\varepsilon > 0$, we have
\[
\|T^\lambda g\|_{L^q(Y)}^q
\lesssim_{\varepsilon,\phi}
R^\varepsilon
M^{\frac q2-1}
\sum_{T \in \mathbb{T}}
\|T^\lambda f_T\|_{L^q(w_{B_R})}^q
+ \rap(R)\,\|f\|_2^q.
\]
\end{theorem}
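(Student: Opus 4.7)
My plan is to derive the refined decoupling inequality by combining a standard flat decoupling for the cinematic curvature extension with a pigeonholing argument that exploits the tube-incidence bound on $Y$, following the blueprint of Guth--Iosevich--Ou--Wang adapted to the oscillatory integral setting developed earlier in this paper.

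The first ingredient I would invoke is the Bourgain--Demeter type flat decoupling at scale $K^2$: for any ball $B \subset \mathbb{R}^n$ of radius $K^2$,
\[
\|T^\lambda g\|_{L^q(B)}^q \lesssim_\varepsilon K^{q\varepsilon} \left( \sum_{\tau \in \Theta_{K^{-1}}} \|T^\lambda g_\tau\|_{L^q(w_B)}^2 \right)^{q/2}
\]
for $2 \le q \le \frac{2n}{n-2}$, where $g_\tau$ is the piece of $g$ with frequency support in $\tau$. This is the cinematic-curvature analog of the classical cone decoupling, and it follows from Bourgain--Demeter's cone decoupling together with the phase-function reductions of Section~\ref{section-preliminary} and the Lorentz rescaling of Section~\ref{Lorentz-rescaling-section}.

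To inject the refinement I would cover $Y$ by the collection $\mathcal{B}$ of $K^2$-balls. For each $B \in \mathcal{B}$, Lemma~\ref{lemsuppT} implies that only the wave packets in $\mathbb{T}_B := \{T \in \mathbb{T} : T \cap B \neq \emptyset\}$ contribute meaningfully on $B$; setting $g_B := \sum_{T \in \mathbb{T}_B} f_T$ gives $\|T^\lambda g\|_{L^q(B)}^q \lesssim \|T^\lambda g_B\|_{L^q(B)}^q + \rap(R)\|f\|_2^q$. Apply the flat decoupling to $g_B$. Since $|\mathbb{T}_B| \le M$, at most $M$ caps $\tau$ carry nontrivial contributions, so H\"older's inequality in $\tau$ produces the characteristic $M^{q/2-1}$ factor:
\[
\left( \sum_\tau \|T^\lambda (g_B)_\tau\|_{L^q(w_B)}^2 \right)^{q/2} \le M^{q/2-1} \sum_\tau \|T^\lambda (g_B)_\tau\|_{L^q(w_B)}^q.
\]

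The crux is then to dominate the cap-level norm $\|T^\lambda (g_B)_\tau\|_{L^q(w_B)}^q$ by the individual wave packet norms $\|T^\lambda f_T\|_{L^q(w_{B_R})}^q$. I would proceed by induction on the scale $R$: Lorentz-rescaling within each $\tau$ via Lemma~\ref{rescaling}, the restricted estimate becomes a refined decoupling statement at the smaller scale $R/K^2$ for an oscillatory integral operator still of type $\mathbf{1}$, to which the inductive hypothesis applies. The comparability hypothesis on $\|T^\lambda f_T\|_{L^q(w_{B_R})}$ is essential here: it propagates under the rescaling to comparability at the smaller scale, so that the pigeonholing needed to group wave packets by $L^q$-norm costs only $O(\log R)$, absorbable in $R^\varepsilon$. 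Summing over $\mathcal{B}$ and using the finite overlap structure of the cover, together with the observation that each tube $T$ intersects $O(1)$ of the $K^2$-balls transverse to its axis, closes the argument.

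The main obstacle will be the careful bookkeeping of this induction: one must verify that the cinematic curvature conditions of type $\mathbf{1}$ survive the Lorentz rescaling at every intermediate scale (handled by the framework of Section~\ref{Lorentz-rescaling-section}), and ensure that the exponent $M^{q/2-1}$ is produced exactly once rather than compounded across levels of the recursion. The comparability hypothesis is precisely the mechanism that allows the induction to close with the sharp exponent on $M$; without it, one would need a dyadic pigeonhole reorganization of tubes by norm size that, while still workable, would require additional care to avoid suboptimal losses.
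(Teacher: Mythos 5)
First, a procedural note: the paper does not prove this theorem itself --- it cites it verbatim from Gan--Wu (\cite{GW}, Theorem A.3), so there is no ``paper's own proof'' in the LaTeX to compare against. I will therefore assess your sketch on its own merits against the known GIOW-type blueprint.

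Your proposal has the right raw ingredients (single-step $\ell^2$ cone decoupling at scale $K^2$, H\"older in the caps $\tau$ using $|\mathbb{T}_B|\le M$, Lorentz rescaling, induction on scales), and the H\"older step is stated correctly: if at most $M$ caps contribute then
$\bigl(\sum_\tau A_\tau^2\bigr)^{q/2}\le M^{q/2-1}\sum_\tau A_\tau^q$. However, there is a genuine gap at the heart of the argument, and you in fact name it yourself without resolving it. After decoupling into $K^{-1}$-caps $\tau$ and picking up $M^{q/2-1}$ at the top scale, you propose to control each $\|T^\lambda(g_B)_\tau\|_{L^q}$ by Lorentz rescaling to scale $R/K^2$ and applying the inductive hypothesis. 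But the inductive hypothesis \emph{is} the refined decoupling theorem, which carries its own factor $(M')^{q/2-1}$ where $M'$ is the incidence count at the smaller scale. You give no argument that $M'$ drops (say to $O(1)$) under the rescaling restricted to a single $\tau$, nor do you show that the two factors combine to $M^{q/2-1}$ rather than $M^{q-2}$; and the number of induction levels is $\sim\log R/\log K\sim\varepsilon^{-50}$, so any multiplicative loss compounds catastrophically. The sentence ``the comparability hypothesis is precisely the mechanism that allows the induction to close with the sharp exponent on $M$'' is an assertion, not a mechanism: comparability of $\|T^\lambda f_T\|_{L^q}$ is what lets one pass between $\ell^2$- and $\ell^q$-sums cleanly, but it does nothing by itself to prevent the incidence count from reappearing at every level.

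The missing idea is precisely the bookkeeping that tracks how the incidence parameter transforms under the cap-rescaling: one must show that a $K^2$-ball at the new scale, intersected with the rescaled $Y_\tau$, meets at most (essentially) the same number of rescaled tubes as the original $K^2$-ball did, so that the $M$-factor is \emph{inherited} rather than re-generated, and is then extracted exactly once at the base of the induction (or, in the alternative formulation, the $\ell^2\to\ell^q$ conversion is deferred entirely to the final scale). Without pinning this down, the argument as written does not close. A secondary, smaller issue: your decoupling step produces norms $\|T^\lambda(g_B)_\tau\|_{L^q(w_B)}$ over a single $K^2$-ball, while the target involves the global norms $\|T^\lambda f_T\|_{L^q(w_{B_R})}$; recombining over $B\in\mathcal{B}$ and the tubes within $\tau$ requires an orthogonality step you do not make explicit, though this is routine and not the substantive gap.
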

\medskip
This theorem plays an important role in the restricted projection problem (see \cite{3GHMW}), the local smoothing problem (see \cite{GW}), and the present work.

\bigskip

\section{Two-ends reduction}\label{section-algorithm}

In this section, we introduce two algorithms that generate the two-ends and broad structures required for the subsequent analysis. These algorithms
may be regarded as streamlined versions of those developed in \cite{GW}.

Suppose $T^{\lambda}$ is an oscillatory integral operator satisfying the cinematic curvature condition of type $\bfone$. Suppose $f \in \mathcal{S}(\mathbb{R}^{n-1})$ is supported in $\mathbb{A}$. Let $p:=2+\frac{8}{3n-5}$. Assume \eqref{assumpetion 1}, \eqref{assumption2}, \eqref{assumption 3} and \eqref{assumption 4}. Cover $[-R,R]$ a finitely overlapping
family of intervals $I$ of length $\sim R/\Kc$.  Let
$\Omega=\{\omega\}$ denote the associated collection of horizontal regions,
where each $\omega$ is of the form
\[\R^{n-1}\times I\]. 

In order to formalize the two-ends and broad structures, we introduce the following notions from \cite[Section 6]{GW}.

\begin{definition}[Shaded incidence triple] \cite [Definition 6.1] {GW}
Let $\Om$ be the set of horizontal regions introduced above. 
A \textbf{shaded incidence triple} (or simply triple) $(\bB,\bT;\cG)$ is the following:
\begin{enumerate}
    \item $\bB=\{B\}$ is a set of $K^2$-balls intersecting $B_R^n(0)$;
    \item $\bT=\{T\}$ is a set of $R$-planks in $B_R^n(0)$;
    \item $\cG$ is the \textbf{shading map} that $\cG:\bT\to P(\Om)$. Here $P(X)$ is the collection of subsets of $X$, which is also called the power set of $X$. 
\end{enumerate}
\end{definition}
 
Provided two shading maps $\cG_1$ and $\cG_2$, we say $\cG_1 \subset \cG_2$ iff $\cG_1(T) \subset \cG_2(T)$ for all $T \in \bT$. Given a triple $(\bB,\bT;\cG)$, we consider the following incidence count:
\[
\mathcal{I}(\bB,\bT;\cG)
:= \#\bigl\{(B,T)\in \bB \times \bT :
B\cap 100T \neq \emptyset,\; B \subset \bigcup \mathcal{G}(T)\bigr\}.
\]
For each $B \in \bB$, we further define
\[
\bT (B;\mathcal{G})
:= \bigl\{T \in \bT :
B\cap 100T \neq \emptyset,\; B \subset \bigcup \mathcal{G}(T)\bigr\},
\]
and for each $T \in \bT$, we define
\[
\bB (T;\mathcal{G})
:= \bigl\{B \in \mathcal{B} :
B\cap 100T \neq \emptyset,\; B \subset \bigcup \mathcal{G}(T)\bigr\}.
\]

\begin{definition} \cite [Definition 7.2] {GW}
Given a triple $(\bB,\bT;\cG)$, we denote
\begin{equation}
\nonumber
    L^p(\bB,\bT;\cG;f):=\sum_{B\in\bB}\|\sum_{T\in \bT(B;\cG)}T^\la f_T\|_{L^p(B)}^p.
\end{equation}
\end{definition}

\begin{definition} \cite [Definition 6.2] {GW}
\label{T[S]}
For an $R$-plank $T$, we use $\theta(T)\in \Theta_{R^{-1/2}}$ to denote its directional cap.
Let $S$ be a union of $\theta\in\Theta_{R^{-1/2}}$ and let $\bT$ be a set of $R$-planks. We define 
    \[\bT[S]:=\{T\in\bT: \theta(T)\subset S\}. \]
\end{definition}

\begin{definition}[Broad incidence]\label{defbbr} \cite [Definition 6.3] {GW}
Given a triple $(\bB,\bT;\cG)$, define 
    \[\cI_{\bbr}(\bB,\bT;\cG):=\inf_{\tau\in \Theta_{K^{-1}}}\cI(\bB,\bT[(4\tau)^c];\cG). \]
For a set of planks $\bT$, we define the broad cardinality as
    \[\#_\bbr \bT:=\min_{\tau\in\Theta_{K^{-1}}}\#\bT[(4\tau)^c]. \]
\end{definition}

\bigskip

\subsection{Algorithm 1}

We now describe Algorithm 1, which provides a procedure for refining a given triple $(\bB,\bT,\cG)$, where $\bT$ represents the collection of wave packets and $\bB$ represents the integration domain. The output of the algorithm is a refined triple together with several auxiliary parameters. 

\medskip

\noindent\textit{Input:} Given an incidence triple $(\bB,\bT;\cG)$ and a $\nu>0$ such that for $B\in\bB$,
\[\big\|\sum_{T\in\bT(B,\cG)}T^\la f_T\big\|_{L^p(B)}\sim \nu.\]

\medskip

\noindent
\textit{Pigeonholing and pruning: } By dyadic pigeonholing, we can find some $\mu \lesssim R^{O(1)}$ and $\bB_1 \subset \bB$ with
\begin{equation}
\label{B1}
    \#_\bbr \bT(B;\cG) \sim \mu \qquad \textup{for all $B\in\bB_1$}
\end{equation}
 and
\begin{equation} \label{LP1}
    L^p(\bB,\bT;\cG;f) \leq (\log R)^{O(1)} L^p(\bB_1,\bT;\cG;f)
\end{equation}
Given $W \lesssim R^{O(1)}$ and $\beta \lesssim \Kc$, define
\begin{equation} \label{newshade}
    \cG_W(T):=\{ \om\in\cG(T):  \#\{B\in\bB_1(T;\cG):B\subset \om\} \sim W \}
\end{equation}
\begin{equation} \label{newtube}
    \bT_{W,\beta}=\{ T\in\bT: \#\cG_W(T) \sim \beta \} 
\end{equation}
Fix $B \in \bB_1$. By pigeonholing, there exists $W(B)$ and $\beta(B)$ such that
\[\nu\sim\big\|\sum_{T\in\bT(B;\cG)}T^\la f_T\big\|_{L^p(B)} \leq (\log R)^{O(1)}\big\|\sum_{T\in\bT_{W(B),\beta}(B;\cG_{W(B)})}T^\la f_T \big\|_{L^p(B)} \] 
By dyadic pigeonholing again, we can find $W$, $\beta$, $\nu_1$ and $\bB_2 \subset \bB_1$ such that
\begin{equation*}
    W(B) \sim W, \qquad \beta(B) \sim \beta, \qquad \big\|\sum_{T\in\bT_{W,\beta}(B;\cG_W)}T^\la f_T \big\|_{L^p(B)} \sim \nu_1 \qquad \text{for all $B \in \bB_2$}
\end{equation*}
and
\begin{equation} \label{LP2}
L^p(\bB_1,\bT;\cG;f) \leq (\log R)^{O(1)} L^p(\bB_2,\bT_{W,\beta};\cG_W;f)
\end{equation}
Finally, we cover $B_R^n(0)$ by finite-overlapping $R^{1/2}$-cubes. By pigeonholing, we can find a dyadic number $m$ and a set $\cQ$ of $R^{1/2}$-cubes such that
\[\#_\bbr \bigcup_{B\in\bB_2, B \cap Q \neq \emptyset}\bT_{W,\be}(B;\cG_W)\sim m \qquad \text{for all $Q \in \cQ$}\]
and
\begin{equation} \label{LP3}
L^p(\bB_2,\bT_{W,\beta};\cG_W;f) \leq (\log R)^{O(1)} L^p(\bB_3,\bT_{W,\beta};\cG_W;f)
\end{equation}
where
\[\bB_3 := \{B:B \in \bB_2, B \cap \big(\bigcup_{Q \in \cQ}Q\big) \neq \emptyset\}\]
Define
\[ l:=\sup_{T\in\bT_{W,\be}}\#\{Q\in \cQ: Q\cap T\neq\emptyset \}\]

Now we end Algorithm 1. 
\medskip

\noindent
\textit{Some quantitative relationships:} From \eqref{B1},
\begin{equation}\label{item3}
\cI(\bB_1,\bT;\cG)=\sum_{B\in\bB_1}\#\bT(B;\cG)\ge \#\bB_1 \mu.
\end{equation}
From \eqref{newshade} and \eqref{newtube},
\begin{equation}\label{item4}
\cI(\bB_1,\bT_{W,\beta};\cG_W)=\sum_{T\in\bT_\beta}\#\bB_1(T;\cG_W)\lesssim \#\bT_{W,\beta}\cdot\beta W
\end{equation}
By \eqref{LP1}, \eqref{LP2} and \eqref{LP3},
\begin{equation}\label{item5}
L^p(\bB,\bT;\cG;f) \leq (\log R)^{O(1)} L^p(\bB_3,\bT_{W,\beta};\cG_W;f)
\end{equation}
An application of the classical hairbrush argument yields
\begin{lemma}\cite[Lemma 6.4]{GW}\label{item6}
    Suppose $\beta>100$. Then
    \[\#\bB_1\gtrsim (K\Kc)^{-O(1)}Wlm\]
\end{lemma}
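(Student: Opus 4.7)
The plan is to run the classical hairbrush argument on $\bT_{\la,\be}$, with a carefully chosen stem plank $T_0$. First, I pick $T_0\in\bT_{\la,\be}$ realizing the supremum in the definition of $l$, so that $T_0$ crosses $\sim l$ cubes $Q_1,\dots,Q_l\in\cQ$. Let $\tau_0\in\Theta_{K^{-1}}$ be the cap containing $\theta(T_0)$. Unpacking the definitions of $m$ and of broad cardinality, for each $Q_i$ one extracts a subcollection of $\gtrsim m$ hair planks $T\in\bT_{\la,\be}$ passing near $Q_i$ whose directional caps lie outside $4\tau_0$; in particular $\dist(\theta(T),\theta(T_0))\gtrsim K^{-1}$ for every such hair.

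The hypothesis $\be>100$ then feeds into a transversality step. For each hair $T$ attached at $Q_i$, Lemma \ref{geometric} forces $T$ and $T_0$ to diverge at transverse rate $\sim K^{-1}$, so once the $x_n$-distance from $Q_i$ exceeds $\sim KR^{1/2+\de}$ the two $R$-planks become disjoint. Since each horizontal region has $x_n$-length $R/\Kc$ and the parameter choices yield $K\Kc\ll R^{1/2-\de}$, only $O(1)$ horizontal regions lie in this near zone of $Q_i$. Because $\be>100$, each hair $T$ therefore admits at least one shaded region $\om(T,Q_i)\in\cG_\la(T)$ in the far zone; by \eqref{newshade} such a region contains $\sim\la$ balls $B\in\bB_1$ that are shaded by $T$ and disjoint from $T_0$. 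Summing over $i$ and over the hairs through each $Q_i$ yields $\gtrsim lm\la$ triples $(T,Q_i,B)$.

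To convert this triple count into a lower bound on $\#\bB_1$, I would control, for each fixed $B$, the number of pairs $(T,Q_i)$ producing $B$. The key observations are that a single hair $T$ can meet $T_0$ in only $O(1)$ cubes of $\cQ$ (again because the near zone of any intersection is confined to $O(1)$ horizontal regions), and that for $B$ in the far zone of $T_0$ the directional caps of hairs $T$ shading $B$ and crossing $T_0$ lie in a pencil whose cardinality is controlled by $K$, $\Kc$ and $R^\de$. A standard hairbrush-style incidence count then gives the overcount bound $(K\Kc)^{O(1)}$, which produces $\#\bB_1\gtrsim(K\Kc)^{-O(1)}lm\la$, as required. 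I expect the main obstacle to be exactly this last step: one must check carefully that the pencil of hairs through a given far ball meeting the fixed stem has cardinality only polynomial in $K$ and $\Kc$, so that the overcount does not absorb any of the $l$, $m$ or $\la$ factors, and in particular no dependence on the multiplicity $\mu$ enters.
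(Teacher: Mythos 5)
The paper does not prove this lemma: it is stated as an import from Gan--Wu (Lemma~6.4 of \cite{GW}), with the one-line remark ``An application of the classical hairbrush argument yields \dots.''  So there is no in-paper proof to compare against, and what you have done is reconstruct the hairbrush argument from scratch.  Your reconstruction has the right skeleton and, as far as I can tell, the right conclusion.  In particular, the three ingredients are all present and correctly used: (i) extracting $\gtrsim m$ transverse hairs at each of the $\sim l$ cubes that the stem $T_0$ crosses, via the broad cardinality applied with $\tau=\tau_0$; (ii) using $\beta>100$ together with the $O(1)$ bound on the number of horizontal regions in the $\sim KR^{1/2+\delta}$ near zone of $Q_i$ to guarantee a shaded region of $\sim\lambda$ balls far from the stem; and (iii) bounding, for a fixed far ball $B$, the number of admissible pairs $(T,Q_i)$ by $(K\Kc)^{O(1)}$.

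One quantitative slip worth flagging: you assert that a single hair $T$ can meet $T_0$ in only $O(1)$ cubes of $\cQ$, ``because the near zone of any intersection is confined to $O(1)$ horizontal regions.''  The two scales are different.  The near zone has $x_n$-extent $\sim KR^{1/2+\delta}$, which indeed sits inside $O(1)$ horizontal regions of length $R/\Kc$, but it contains $\sim KR^\delta$ cubes of sidelength $R^{1/2}$, not $O(1)$.  This does not damage the lemma, since $KR^\delta\lesssim (K\Kc)^{O(1)}$ and the target bound already carries a $(K\Kc)^{-O(1)}$ factor; but the sentence as written conflates the region scale $R/\Kc$ with the cube scale $R^{1/2}$, and a careful write-up should bound the cube count by $(K\Kc)^{O(1)}$ rather than $O(1)$.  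For the ``pencil'' count in step (iii), the relevant estimate is that the $x_n$-separation between $Q_i$ and $B$ is $\gtrsim R/\Kc$, so the direction of a plank through both is pinned down to $\sim \Kc R^{-1/2}$ accuracy (giving $\lesssim\Kc^{n-2}$ caps $\theta$), and then the translation is pinned down to $(K^2/R^{2\delta})^{O(1)}$ accuracy by the requirement $100T\cap B\ne\emptyset$; both factors are $(K\Kc)^{O(1)}$, as you anticipated.
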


\bigskip

\subsection{Algorithm 2} Algorithm 2 repeatedly applies Algorithm 1 to establish additional useful quantitative relationships. By a pigeonholing argument and Lemma \ref{lemsuppT}, there exists a family $\cB=\{B\}$ of finitely overlapping $K^2$-balls such that
\[\|\sum_{T\in\T,T \cap B \neq \emptyset}T^\la f_T\|_{L^p(B)}\]
are comparable for all $B\in\cB$ and
\[\|\sum_{T\in\T}T^\la f_T\|_{L^p(B_R^{n+1})}^p \leq (\log R)^{O(1)} \sum_{B\in\cB}\|\sum_{T\in\T, T \cap B \neq \emptyset}T^\la f_T\|_{L^p(B)}^p + \rap(R)\,\|f\|_2^p\]
Together with \eqref{goal-1}, we see that
\begin{equation} \label{stage1}
    \|T^\la f\|_{L^p(B_R^n(0))}^p \leq (\log R)^{O(1)} \sum_{B\in\cB}\|\sum_{T\in\T, T \cap B \neq \emptyset}T^\la f_T\|_{L^p(B)}^p + \rap(R)\,\|f\|_2^p
\end{equation}

Set $\mathbb{T}^{(0)} = \mathcal{T}$, $\cB^{(0)} = \cB$ and
\begin{equation*}
    \cG^{(0)}(T)=\Om \qquad \text{for all $T \in \mathcal{T}$}
\end{equation*}
So, \eqref{stage1} can be rewritten as
\begin{equation} \label{stage2}
    \|T^\la f\|_{L^p(B_R^n(0))}^p \leq (\log R)^{O(1)} L^p(\cB^{(0)},\mathbb{T}^{(0)};\cG^{(0)};f) + \rap(R)\,\|f\|_2^p
\end{equation}

Next, we apply Algorithm 1 iteratively. At the $i$-th step, we feed $(\cB^{(i-1)},\mathbb{T}^{(i-1)}$ $;\cG^{(i-1)};f)$ into Algorithm 1, which then outputs the following:
\begin{enumerate}
    \item Parameters: $\mu^{(i-1)}, W^{(i-1)},\beta^{(i-1)}, m^{(i-1)}, l^{(i-1)}$.
    
    \item Two families of $K^2$-balls: $\cB_3^{(i-1)}\subset \cB_1^{(i-1)}\subset \cB^{(i-1)}$.
    
    \item  A set of planks $\T_{W^{(i-1)},\beta^{(i-1)}}^{(i-1)}\subset \T^{(i-1)}$, a shading map $\cG_{W^{(i-1)}}^{(i-1)}\subset \cG^{(i-1)}$, and a set of $R^{1/2}$-cubes $\cQ^{(i-1)}$.
\end{enumerate}
We set $\cB^{(i)} := \cB_3^{(i-1)}$, $\T^{(i)}:=\T_{W^{(i-1)},\beta^{(i-1)}}^{(i-1)}$ and $\cG^{(i)}:=\cG_{W^{(i-1)}}^{(i-1)}$.

Let $\ka=R^{\e^{500}}$. Clearly, for all $i$ we have 
\[\mu_{i+1} \leq \mu_i \lesssim R^{O(1)}\]
\[\cI(\cB_1^{(i+1)},\T^{(i+1)};\cG^{(i+1)}) \leq \cI(\cB_1^{(i)},\T^{(i)};\cG^{(i)})\lesssim R^{O(1)}\]
Hence, for some $N \lesssim_{\varepsilon} 1$
\begin{equation} \label{bound_mu_N}
    \mu_N \leq \ka \mu_{N+1}
\end{equation}
\begin{equation} \label{bound_IN}
    \cI(\cB_1^{(N)},\T^{(N)};\cG^{(N)}) \leq \ka \cI(\cB_1^{(N+1)},\T^{(N+1)};\cG^{(N+1)})
\end{equation}
We now state an important quantitative relationship that will play a key role in the subsequent analysis.
\begin{lemma}\label{forfinal}
If $\be^{(N)} > 100$, then
    \[\mu^{(N)}\lesssim R^{\e^2}\big(\#\T^{(N+1)} / l^{(N)}\big)^{1/2}\]
\end{lemma}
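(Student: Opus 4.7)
The plan is to combine the incidence bounds \eqref{item3} and \eqref{item4}, the exit condition \eqref{bound_IN}, the hairbrush inequality Lemma \ref{item6} at step $N$, and a monotonicity comparison of the broad counts $\mu^{(N+1)}$ and $m^{(N)}$. First I apply \eqref{item3} at step $N$ to get
\[\#\cB_1^{(N)}\mu^{(N)}\leq \cI(\cB_1^{(N)},\T^{(N)};\cG^{(N)}).\]
The exit condition \eqref{bound_IN} controls the right side by $\ka\,\cI(\cB_1^{(N+1)},\T^{(N+1)};\cG^{(N+1)})$. Using the chain of inclusions $\cB_1^{(N+1)}\subset\cB^{(N+1)}=\cB_3^{(N)}\subset\cB_1^{(N)}$, this is in turn bounded by $\cI(\cB_1^{(N)},\T^{(N+1)};\cG^{(N+1)})$, which by \eqref{item4} at step $N$ is $\lesssim \#\T^{(N+1)}\beta^{(N)}\la^{(N)}$. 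Lemma \ref{item6} at step $N$, whose hypothesis $\beta^{(N)}>100$ is precisely the assumption of the present lemma, yields $\#\cB_1^{(N)}\gtrsim (K\Kc)^{-O(1)}l^{(N)}m^{(N)}\la^{(N)}$. Cancelling the common factor $\la^{(N)}$, I arrive at
\[l^{(N)}m^{(N)}\mu^{(N)}\lesssim (K\Kc)^{O(1)}\,\ka\,\beta^{(N)}\,\#\T^{(N+1)}.\]

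The main step is to replace the factor $m^{(N)}$ on the left by $\mu^{(N)}$. For this I would establish the monotonicity comparison $\mu^{(N+1)}\leq m^{(N)}$. Each $B\in\cB_1^{(N+1)}$ lies in $\cB^{(N+1)}=\cB_3^{(N)}\subset\cB_2^{(N)}$ and intersects some cube $Q\in\cQ^{(N)}$, so taking $B'=B$ in the union defining $m^{(N)}$ gives
\[\T^{(N+1)}(B;\cG^{(N+1)})\subset \bigcup_{B'\in\cB_2^{(N)},\,B'\cap Q\neq\emptyset}\T^{(N+1)}(B';\cG^{(N+1)}).\]
Monotonicity of broad cardinality under set inclusion, which is immediate from the definition $\#_\bbr \bT=\min_\tau \#\bT[(4\tau)^c]$, then gives $\mu^{(N+1)}\leq m^{(N)}$. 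Combined with \eqref{bound_mu_N}, this gives $\mu^{(N)}\leq \ka\,\mu^{(N+1)}\leq \ka\,m^{(N)}$.

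Substituting this into the previous inequality and using the trivial bound $\beta^{(N)}\leq \Kc$ (since there are only $\sim\Kc$ horizontal regions in $\Omega$), I obtain
\[l^{(N)}(\mu^{(N)})^{2}\lesssim (K\Kc)^{O(1)}\,\ka^{2}\,\Kc\,\#\T^{(N+1)}.\]
By the parameter choices, $(K\Kc)^{O(1)}\,\ka^{2}\,\Kc\lesssim R^{O(\e^{50})}\leq R^{2\e^{2}}$, so taking square roots yields $\mu^{(N)}\lesssim R^{\e^{2}}(\#\T^{(N+1)}/l^{(N)})^{1/2}$, as desired. I expect the main subtlety to be the monotonicity comparison $\mu^{(N+1)}\leq m^{(N)}$, which depends on carefully tracking the nested inclusions among $\cB_1^{(N+1)}$, $\cB^{(N+1)}$, $\cB_3^{(N)}$, and $\cB_2^{(N)}$, and on identifying the cube $Q\in\cQ^{(N)}$ adjacent to the given $B$ so as to embed $\T^{(N+1)}(B;\cG^{(N+1)})$ into the set whose broad cardinality realizes $m^{(N)}$.
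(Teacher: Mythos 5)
Your proof is correct and takes essentially the same route as the paper: the paper derives the inequality $\mu^{(N)} \leq (K\Kc)^{O(1)} \ka \#\T^{(N+1)} \be^{(N)} (m^{(N)}l^{(N)})^{-1}$ from Lemma \ref{item6}, \eqref{item3}, \eqref{item4} and \eqref{bound_IN}, the inequality $\mu^{(N)} \leq \ka\,\mu^{(N+1)} \leq \ka\,m^{(N)}$ from \eqref{bound_mu_N}, and takes their geometric mean, which is exactly what your substitution of $m^{(N)}\ge \mu^{(N)}/\ka$ accomplishes. A nice feature of your write-up is that you explicitly justify the comparison $\mu^{(N+1)}\le m^{(N)}$ via the inclusion $\cB_1^{(N+1)}\subset \cB_3^{(N)}\subset\cB_2^{(N)}$, the pigeonholed cube $Q\in\cQ^{(N)}$, and monotonicity of $\#_\bbr$, a step the paper's \eqref{462} uses but leaves unexplained.
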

\begin{proof}
    By Lemma \ref{item6}, \eqref{item3}, \eqref{item4} and \eqref{bound_IN}
    \begin{equation} \label{461}
        \mu^{(N)} \leq (K\Kc)^{O(1)} \ka \#\T^{(N+1)} \be^{(N)} \big(m^{(N)}l^{(N)}\big)^{-1}
    \end{equation}
    By \eqref{bound_mu_N},
    \begin{equation} \label{462}
        \mu^{(N)} \leq \ka \mu^{(N+1)} \leq \ka m^{(N)}
    \end{equation}
    Thus, the proof is completed by taking the geometric mean of \eqref{461} and \eqref{462}.
\end{proof}
With a slight abuse of notation, we suppress the superscript and denote $\mu^{(N)}$, $W^{(N)}$, $\beta^{(N)}$, $m^{(N)}$, $l^{(N)}$, $\cB^{(N+1)}$, $\mathbb{T}^{(N+1)}$, $\cG^{(N+1)}$ and $\cQ^{(N)}$ simply by $\mu$, $W$, $\beta$, $m$, $l$, $\cB$, $\mathbb{T}$, $\cG$ and $\cQ$. 

Since $N \lesssim_\varepsilon 1$, \eqref{stage2} and \eqref{item5} give
\begin{equation} \label{finalstage}
    \|T^\la f\|_{L^p(B_R^n(0))}^p \leq (\log R)^{O_\varepsilon(1)} L^p(\cB,\mathbb{T};\cG;f) + \rap(R)\,\|f\|_2^p
\end{equation}
This finishes the two-ends redution. 
\bigskip

\section{Complete the proof} \label{concludetheproof}
In this section, we complete the proof of Proposition \ref{redution 3}. For the reader’s convenience, we restate the proposition below.
\begin{proposition}
Suppose $T^{\lambda}$ is an oscillatory integral operator satisfying the cinematic curvature condition of type $\bfone$. Assume \eqref{assumpetion 1}, \eqref{assumption2}, \eqref{assumption 3} and \eqref{assumption 4}, then for all $p$ satisfies \eqref{range of result} and $r$ satisfies \eqref{rrange}, the estimate
\begin{align*}
    \|T^{\lambda}f\|_{L^p(B_R^n(0))} \lesssim_{\varepsilon,p,\phi}R^\varepsilon \|f\|_{L^{2}(\mathbb{R}^{n-1})}^\frac2r \max_{\theta \in \Theta_{R^{-1/2}}}\|f\|_{L^{2}_\mathrm{avg}(3\tilde{\theta})}^{1-\frac2r}\\
    \|f\|_{L^{2}_\mathrm{avg}(3\tilde{\theta})}:= \left(\frac{1}{|3\tilde{\theta}|}\int_{3\tilde{\theta}} |f|^2\right)^{1/2}
\end{align*}
holds uniformly for $f \in \mathcal{S}(\mathbb{R}^{n-1})$ with $\supp f \subset \mathbb{A}$, $1 \leq R \leq \lambda^{1-\varepsilon}$ and $\varepsilon > 0$.
\end{proposition}
We work under the same setting as in Section \ref{section-algorithm}. By interpolating with $L^1 \mapsto L^\infty$ bound and and applying H\"older's inequality, we may assume that
\begin{equation} \label{prsetting}
    p = 2 + \frac{8}{3n-5}, \qquad \frac{r}{r-1} = \frac{n-2}{n}p
\end{equation}
Cover $B_R^n(0)$ by finite-overlapping $R/\Kc$-balls $B_k$. For each $B_k$ define $\mathbb{T}_k$ by
\[\mathbb{T}_k := \{ T \in \mathbb{T}, 100T \cap B_k \cap \big(\cup \cG(T)\big) \neq \emptyset \}\]
and $f_k$ by
\[f_k := \sum_{T \in \mathbb{T}_k} f_T\]
Now we can rewrite \eqref{finalstage} as
\begin{equation} \label{cp1}
    \|T^\la f\|_{L^p(B_R^n(0))}^p \lesssim (\log R)^{O_\varepsilon(1)} \sum_{B_k} \|T^\la f_k\|_{L^p(B_k)}^p + \rap(R)\,\|f\|_2^p
\end{equation}
In the rest of this section, we will consider the following two cases separately:
\begin{enumerate}
    \item \textbf{One-end}: $\beta\le 100$.
    \item \textbf{Two-ends}: $\beta > 100$. 
\end{enumerate}

\subsection{Case: One-end} In this subsection, we prove Proposition \ref{redution 3} under the assumption that $\beta\le 100$. Applying the induction hypothesis to each $B_k$, we obtain
\begin{multline}\label{one-end-case-1}
    \|T^\la f_k\|_{L^p(B_k)}^p\le C_\e (\frac{R}{\Kc})^{p\e}\|f_k\|_2^\frac{2p}{r}\,\max_{\theta \in \Theta_{\Kc^{1/2} R^{-1/2}}}\|f_k\|_{L^{2}_\mathrm{avg}(3\tilde{\theta})}^{p-\frac{2p}{r}} + \rap(R)\,\|f\|_2^p
\end{multline}
By Lemma \ref{norm_l2_local},
\begin{equation} \label{hapiha}
    \max_{\theta \in \Theta_{ R^{-1/2}}}\|f_k\|_{L^{2}_\mathrm{avg}(3\tilde{\theta})}^{p-\frac{2p}{r}} \lesssim \max_{\theta \in \Theta_{ R^{-1/2}}}\|f\|_{L^{2}_\mathrm{avg}(3\tilde{\theta})}^{p-\frac{2p}{r}} + \rap(R)\,\|f\|_2^{p-\frac{2p}{r}}
\end{equation}
Combining this with \eqref{one-end-case-1}, we obtain
\[\|T^\la f_k\|_{L^p(B_k)}^p \leq C_\e (\frac{R}{\Kc})^{p\e}\|f_k\|_2^\frac{2p}{r}\,\max_{\theta \in \Theta_{ R^{-1/2}}}\|f\|_{L^{2}_\mathrm{avg}(3\tilde{\theta})}^{p-\frac{2p}{r}} + \rap(R)\,\|f\|_2^p\]
By \eqref{cp1},
\begin{multline} \label{one-end-case-2}
    \|T^\la f\|_{L^p(B_R^n(0))}^p \lesssim C_\e \Kc^{-p\e}(\log R)^{O_\varepsilon(1)} R^{p\e}\sum_{B_k}\|f_k\|_2^\frac{2p}{r}\,\max_{\theta \in \Theta_{ R^{-1/2}}}\|f\|_{L^{2}_\mathrm{avg}(3\tilde{\theta})}^{p-\frac{2p}{r}} \\ + \rap(R)\,\|f\|_2^p
\end{multline}
Since $\beta\le 100$, each wave packet $f_T$ appears in the sum defining $O(1)$ different functions $f_k$. Also, from \eqref{prsetting} we see that $r \leq p$. Consequently, Lemma \ref{l2-orthogonalty} yields
\[\sum_k\|f_k\|_2^\frac{2p}{r}\lesssim \|f\|_2^\frac{2p}{r}\]
Combining this with \eqref{one-end-case-2}, we have
\begin{multline*}
\|T^\la f\|_{L^p(B_R^n(0))}^p \lesssim C_\e \Kc^{-p\e}(\log R)^{O_\varepsilon(1)} R^{p\e}\|f\|_2^\frac{2p}{r}\,\max_{\theta \in \Theta_{ R^{-1/2}}}\|f\|_{L^{2}_\mathrm{avg}(3\tilde{\theta})}^{p-\frac{2p}{r}} \\ + \rap(R)\,\|f\|_2^p
\end{multline*}
Since $C_\e \Kc^{-p\e}(\log R)^{O_\varepsilon(1)}$ can be arbitrarily small, the induction closes. 
\bigskip

\subsection{Two-ends: Reduction to Broad Estimate}
Suppose $\be > 100$. Now we reduce Proposition \ref{redution 3} to broad case. By pigeonholing, we can find a $B_k$ such that
\begin{equation}
\label{two1}
    \|T^\la f\|_{L^p(B_R^n(0))}^p\lessapprox \Kc^{O(1)}\sum_{B\in\bB, B\subset B_k}\|T^\la f_k\|_{L^p(B)}^p + \rap(R)\,\|f\|_2^p
\end{equation}
Let $A=10000$. For any $B_{K^2}^n(y) \in \mathcal{B}$ with $B_{K^2}^n(y) \cap B_k \neq \emptyset$, fix a choice of $v_1,\dots,v_A \in S^{n-1}$ so that the minimum
\begin{multline*}
    \min_{v_1,\dots,v_A \in S^{n-1}}\max_{\substack{\tau\in \Theta_{K^{-1}}\\ \ang(G_0(y/\lambda;\om_\tau,1),v_i)\gtrsim K^{-1}\text{for all $1\leq i \leq A$}}}\int_{B_{K^2}^n(y)}|T^\la f_{k,\tau}|^p \\ \text{where} \quad f_{k,\tau} := (f_k)_\tau
\end{multline*}
is attained. By H\"older's inequality, 
\begin{multline*}
\|T^\la f_k\|_{L^p(B_{K^2}^n(y))}^p \lesssim \sum_{1 \leq i \leq A} \|T^\la (\sum_{{\substack{\tau\in \Theta_{K^{-1}}\\ \ang(G_0(y/\lambda;\om_\tau,1),v_i)\lesssim K^{-1}}}}f_{k,\tau})\|_{L^p(B_{K^2}^n(y))}^p \\ + K^{O(1)}\|T^\la f_k\|_{\BL^p_A(B_{K^2}^n(y))}^p
\end{multline*}
For each $i$, there are $O(1)$ $K^{-1}$-caps satisfies $\ang(G_0(y/\lambda;\om_\tau,1),v_i) \lesssim K^{-1}$, so
\[\|T^\la f_k\|_{L^p(B_{K^2}^n(y))}^p \lesssim \sum_{\tau\in \Theta_{K^{-1}}} \|T^\la f_{k,\tau}\|_{L^p(B_{K^2}^n(y))}^p  \\ + K^{O(1)}\|T^\la f_k\|_{\BL^p_A(B_{K^2}^n(y))}^p\]
Let $X := \bigcup_{B \in \mathcal{B},B \cap B_k \neq \emptyset} B$. By \eqref{two1},
\begin{multline} \label{broadreductionn}
\|T^\la f\|_{L^p(B_R^n(0))}^p \lessapprox \Kc^{O(1)} \sum_{\tau\in \Theta_{K^{-1}}} \|T^\la f_{k,\tau}\|_{L^p(B_R^n(0))}^p \\ + \big(\Kc K\big)^{O(1)}\|T^\la f_k\|_{\BL^p_A(X)}^p + \rap(R)\,\|f\|_2^p
\end{multline}
Applying induction on scale, Lemma \ref{rescaling} and \eqref{hapiha} to each $\|T^\la f_{k,\tau}\|_{L^p(B_R^n(0))}^p$, we obtain
\begin{multline*}
    \|T^\la f_{k,\tau}\|_{L^p(B_R^n(0))}^p \lesssim_\phi K^{n-(n-2)(p-\frac pr)} R^{p\varepsilon}\,
\|f_{k,\tau}\|_2^\frac{2p}{r}
\max_{\theta \in \Theta_{R^{-1/2}}}
\|f\|_{L^2_{\mathrm{avg}}(3\tilde{\theta})}^{p-\frac{2p}{r}} \\ + \rap(R)\,\|f\|_2^p
\end{multline*} 
Combining this with \eqref{broadreductionn}, we see that
\begin{multline*}
\|T^\la f\|_{L^p(B_R^n(0))}^p \lessapprox_\phi \Kc^{O(1)} K^{n-(n-2)(p-\frac pr)-p\varepsilon}R^{p\varepsilon}\,
\|f\|_2^\frac{2p}{r}
\max_{\theta \in \Theta_{R^{-1/2}}}
\|f\|_{L^2_{\mathrm{avg}}(3\tilde{\theta})}^{p-\frac{2p}{r}} \\ + \big(\Kc K\big)^{O(1)}\|T^\la f_k\|_{\BL^p_A(X)}^p + \rap(R)\,\|f\|_2^p
\bigskip
\end{multline*}
Under \eqref{prsetting} the narrow term vanishes, and thus it remains to show:
\begin{equation}\label{broadcasereduction}
    \|T^\la f_k\|_{\BL^p_A(X)} \lesssim_{\phi,\varepsilon} R^\varepsilon \|f\|_2^\frac2r \max_{\theta \in \Theta_{R^{-1/2}}}
\|f\|_{L^2_{\mathrm{avg}}(3\tilde{\theta})}^{1-\frac2r}
\end{equation}

\bigskip

\subsection{Two-ends: Broad Estimate}
In this subsection we prove \eqref{broadcasereduction}. Let 
\begin{equation} \label{defaq}
    q = \frac{2n}{n-2}, \qquad \al := \frac{n-1}{3n-1}
\end{equation}
and thus
\[\frac1p = \frac\al2 + \frac{1-\al}{q}\]
By Lemma \ref{lemholder}, we have
\begin{equation}\label{twoends2.5}
    \|T^\la f_k\|_{\BL^p_A(X)}\lesssim \|T^\la f_k\|^\al_{\BL^2_{A/2}(X)} \|T^\la f_k\|^{1-\al}_{\BL^q_{A/2}(X)} 
\end{equation} 
Hence, it remains to estimate $\|T^\la f_k\|_{\BL^2_{A/2}(X)}$ and $\|T^\la f_k\|_{\BL^q_{A/2}(X)}$. 

\bigskip

\subsubsection{\texorpdfstring{$L^q$}{}-estimates using refined decoupling} Recall $\eqref{B1}$, for each $K^2$-ball $B$ intersecting $X$, 
    \[\#_\bbr \{T\in\mathbb{T}_k: 100T\cap B\neq\emptyset\} \le \#_\bbr \bT(B;\cG)\lesssim \mu. \]
Hence, one can find $\tau'\in\Theta_{K^{-1}}$ with
\begin{equation} \label{lq1}
    \#\{ T\in\mathbb{T}_k: 100T\cap B\neq\emptyset,\, \theta(T)\not\subset 4\tau' \}\lesssim \mu
\end{equation}
By the definition of broad norm \eqref{defbroadnorm}, there exists a $\tau(B)\not\subset 10\tau'$ so that
\begin{equation}\label{pigeontau}
    \|T^\la f_k\|^q_{\BL^{q}_{A/2}(B)}\lesssim \|T^\la f_{k,\tau(B)}\|^q_{L^{q}(B)}
\end{equation}
Since $\tau(B)\not\subset 10\tau'$, \eqref{lq1} gives
\begin{equation}\label{uppermu}
    \#\{T \in \mathbb{T}_k: 100T\cap B\neq\emptyset,\, 3\theta(T)\cap 2\tau(B) \neq \emptyset\}\lesssim \mu.
\end{equation}
By pigeonholing and \eqref{pigeontau}, there exists $\tau\in\Theta_{K^{-1}}$ and a union of $K^2$-balls $X'\subset X$ such that
\begin{equation} \label{hahahahahaha}
    \|T^\la f_k\|_{\BL^q_{A/2}(X)}^q\lesssim K^{O(1)} \|T^\la f_{k,\tau}\|_{L^q(X')}^q
\end{equation}
Define $\bT$ by
\[\bT := \{T \in \mathbb{T}_k: T\cap X' \neq\emptyset,\, 3\theta(T)\cap 2\tau \neq \emptyset\}\]
Let $w_{B_R}$ be a bump function satisfying $w_{B_R}(x)=1$ for $x \in B_R^n(0)$ and $\supp w_{B_R} \subset B_{2R}^n(0)$. By pigeonholing, there exist dyadic numbers $u_1$,$u_2$ and $\bT' \subset \bT$ such that
\begin{equation} \label{stage0}
    \|T^\lambda {f_{k,\tau}}_T\|_{L^q(w_{B_R})} \sim u_1, \qquad \|{f_{k,\tau}}_T\|_{L^2(\mathbb{R}^{n-1})} \sim u_2, \qquad \text{for all $T \in \bT'$}.
\end{equation}
and
\begin{equation} \label{goal-1}
    \|T^\la f_{k,\tau}\|_{L^q(X')}^q \leq (\log R)^{O(1)} \|\sum_{T\in\bT'}T^\la {f_{k,\tau}}_T\|_{L^q(X')}^q + \rap(R)\,\|f\|_2^q,
\end{equation}
By \eqref{uppermu}, Theorem \ref{refdecthm}, Corollary \ref{essential constant decoupling} and \eqref{stage0},
\begin{equation} \label{hapia}
    \|\sum_{T\in\bT'}T^\la {f_{k,\tau}}_T\|_{L^q(X')}^q \lessapprox_{\phi} \mu^{\frac{q}{2}-1} \#\mathbb{T} u_2^q + \rap(R)\,\|f\|_2^q 
\end{equation}
Define $s$ by \[\frac{\alpha}{2} + \frac{1-\alpha}{s} = \frac{1}{r}\]
A direct computation shows that there exists $\gamma \in [0,1]$ with
\begin{equation} \label{definegamma}
    \frac{1}{s} - \frac{1}{q} + \gamma \big(\frac{1}{2} - \frac{1}{s}\big) = \frac{1}{2} \big(\frac{1}{2} - \frac{1}{q}\big)
\end{equation}
By Lemma \ref{norm_lower_bound},
\[u_2^2 R^{\frac{n-2}{2} (1-\gamma)}|\mathbb{\bT'}|^\gamma \lesssim \max_{\theta \in \Theta_{R^{-1/2}}}\|f_{k,\tau}\|_{L_\mathrm{avg}^2(3\tilde{\theta})}^2 +\rap(R)\|f\|_2^2\]
Together with \eqref{hapia},\eqref{goal-1}, \eqref{hahahahahaha} and Lemma \ref{norm_l2_local}, we have
\begin{multline} \label{final_form_lq}
    \|T^\la f_k\|_{\BL^q_{A/2}(X)} \lessapprox_\phi \\ R^{-\frac{n-2}{2}(1-\gamma)\big(\frac12 - \frac1s \big)} \mu^{\frac12 - \frac1q} \big(\# \mathbb{T} \big)^{\frac1q - \frac1s - \gamma \big(\frac12 - \frac1s \big) }\|f\|_2^\frac2s \max_{\theta \in \Theta_{R^{-1/2}}}
\|f\|_{L^2_{\mathrm{avg}}(3\tilde{\theta})}^{1-\frac2s} \\ + \rap(R)\,\|f\|_2
\end{multline}

\smallskip

\subsubsection{A simple $L^2$-estimate} By pigeonholing, there exists $\tau\in\Theta_{K^{-1}}$ and a union of $K^2$-balls $X'\subset X$ such that
\begin{multline} \label{l2r1}
    \|T^\la f_k\|_{\BL^2_{A/2}(X)}^2\lesssim K^{O(1)} \|T^\la f_{k,\tau}\|_{L^2(X')}^2 \lesssim K^{O(1)} \sum_{Q \in \cQ} \|T^\la f_{k,\tau}\|_{L^2(Q)}^2 \\ = K^{O(1)} \sum_{Q \in \cQ} \|T^\la \big(\sum_{T \in \mathbb{T}, T\cap Q \neq \emptyset}{f_{k,\tau}}_T\big)\|_{L^2(Q)}^2 + \rap(R)\,\|f\|_2^2
\end{multline}
For each $Q$, Corollary \ref{l2 estimate} and Lemma \ref{l2-orthogonalty} give 
\[\|T^\la \big(\sum_{T \in \mathbb{T}, T\cap Q \neq \emptyset}{f_{k,\tau}}_T\big)\|_{L^2(Q)}^2 \lesssim_\phi R^{1/2} \sum_{T \in \mathbb{T}, T\cap Q \neq \emptyset} \|{f_{k,\tau}}_T\|_{L^2(\mathbb{R}^{n-1})}^2 \]
Combining this with \eqref{l2r1}, we obtain
\[\|T^\la f_k\|_{\BL^2_{A/2}(X)}^2 \lessapprox_\phi R^{1/2} \sum_{Q \in \cQ} \sum_{T \in \mathbb{T}, T\cap Q \neq \emptyset} \|{f_{k,\tau}}_T\|_{L^2(\mathbb{R}^{n-1})}^2 + \rap(R)\,\|f\|_2^2 \]
Since each $T \in \mathbb{T}$ intersects $O(l)$ cubes $Q$, Lemma \ref{l2-orthogonalty} yields
\begin{equation} \label{l2-final}
    \|T^\la f_k\|_{\BL^2_{A/2}(X)} \lessapprox_\phi l^{1/2} R^{1/4} \|f\|_{L^2(\mathbb{R}^{n-1})} + \rap(R)\,\|f\|_2
\end{equation}
\smallskip

\subsubsection{Interpolation} We conclude the proof via an interpolation argument. By \eqref{twoends2.5}, \eqref{final_form_lq} and \eqref{l2-final},
\begin{multline*}
    \|T^\la f_k\|_{\BL^p_A(X)}\lessapprox_\phi \\ R^{-\frac{n-2}{2}(1-\gamma)\big(\frac12 - \frac1s \big)(1 - \alpha) +\frac{\alpha}{4} } \mu^{\big(\frac12 - \frac1q\big)(1-\alpha)} \big(\# \mathbb{T} \big)^{[\frac1q - \frac1s - \gamma \big(\frac12 - \frac1s \big)](1-\alpha) }l^\frac{\alpha}{2} \\ \|f\|_2^\frac2r \max_{\theta \in \Theta_{R^{-1/2}}} \|f\|_{L^2_{\mathrm{avg}}(3\tilde{\theta})}^{1-\frac2r} + \rap(R)\,\|f\|_2 
\end{multline*}
From \eqref{definegamma},
\begin{multline} \label{final}
    \|T^\la f_k\|_{\BL^p_A(X)}\lessapprox_\phi \\ R^{-\frac{n-2}{4}\big(\frac12 - \frac1q \big)(1 - \alpha) +\frac{\alpha}{4} } \mu^{\big(\frac12 - \frac1q\big)(1-\alpha)} \big(\# \mathbb{T} \big)^{-\frac{1}{2}\big(\frac12 - \frac1q\big)(1-\alpha)}l^\frac{\alpha}{2}\|f\|_2^\frac2r \max_{\theta \in \Theta_{R^{-1/2}}} \|f\|_{L^2_{\mathrm{avg}}(3\tilde{\theta})}^{1-\frac2r}  \\ + \rap(R)\,\|f\|_2 
\end{multline}
By Lemma \ref{forfinal}, \[\mu \big(\# \mathbb{T}\big)^{-1/2}l^{1/2} \lesssim R^{\e^2}\]
Together with \eqref{final} and \eqref{defaq},
\begin{multline*}
    \|T^\la f_k\|_{\BL^p_A(X)}\lessapprox_\phi R^{-\frac{n-3}{4(3n-1)}} l^\frac{n-3}{2(3n-1)}\|f\|_2^\frac2r \max_{\theta \in \Theta_{R^{-1/2}}} \|f\|_{L^2_{\mathrm{avg}}(3\tilde{\theta})}^{1-\frac2r} + \rap(R)\,\|f\|_2 
\end{multline*}
Since $l \lesssim R^{1/2}$, this gives \eqref{broadcasereduction}. 
\bigskip

\section{An $\varepsilon$-removal lemma}\label{epsilon removal}
In this section, we show that Proposition \ref{epsilon main theorem} implies Theorem \ref{main theorem}. The $R^{\varepsilon}$--loss appearing in the linear estimates of Proposition \ref{epsilon main theorem} can be eliminated away from the endpoint by invoking an $\varepsilon$-removal lemma of the type introduced in \cite{T}. Although the precise form of the required lemma does not seem to appear explicitly in the literature, it can be obtained through a minor adaptation of the arguments in \cite{T} and \cite{GHI}.

Suppose $2 \leq r \leq p_0$. Let $T^{\lambda}$ be an oscillatory integral operator satisfying the cinematic curvature condition. We are given that the estimate
\begin{equation} \label{assumpiton 4}
\|T^{\lambda}f\|_{L^p(B^n_R)} \lesssim_{\phi,a,\varepsilon,p} R^\varepsilon \|f\|_{L^r(\mathbb{R}^{n-1})}
\end{equation}
holds uniformly for $1 \leq R \leq \lambda$ whenever $p \geq p_0$. Under this assumption, we aim to show that the global estimate
\begin{equation} \label{globalestimate}
    \|T^{\lambda}f\|_{L^p(\mathbb{R}^n)} \lesssim_{\phi,a,p} \|f\|_{L^r(\mathbb{R}^{n-1})}
\end{equation}
holds for all $p > p_0$.

We begin by reducing \eqref{globalestimate} to estimates over sparse
collections of $R$-balls.

\begin{definition}\cite{T} Let $R \geq 1$. A collection $\{B(x_j, R)\}_{j=1}^N$ of $R$-balls in $\R^n$ is \emph{sparse} if the centres $\{x_1, \dots, x_N\}$ are $(RN)^{\bar{C}}$-separated. Here $\bar{C} \geq 1$ is a sufficiently large absolute constant. \end{definition} 

\begin{lemma}\label{redution 5} \cite[Lemma 12.2]{GHI} Suppose $p_0 \geq 2$. Let $T^{\lambda}$ be an oscillatory integral operator satisfying the cinematic curvature condition. Assume that \eqref{assumpiton 4} holds uniformly for $1 \leq R \leq \lambda$ whenever $p \geq p_0$. To prove \eqref{globalestimate} for all $p > p_0$ it suffices to show that for all $\varepsilon >0$ the estimate 
\begin{equation}\label{sparse estimate} \|T^{\lambda}f\|_{L^{p_0}(S)} \lesssim_{\varepsilon, \phi, a} R^{\varepsilon}\|f\|_{L^r(B^{n-1})} 
\end{equation} holds whenever $R \geq 1$ and $S \subseteq \R^n$ is a union of $R$-balls belonging to a sparse collection. \end{lemma}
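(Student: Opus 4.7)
The plan is to adapt the classical $\varepsilon$-removal framework of Tao, with modifications following \cite{GHI}, to the present setting of oscillatory integral operators satisfying the cinematic curvature condition. The strategy is to bootstrap the sparse estimate \eqref{sparse estimate} into a restricted weak-type bound at $p_0$ with only a small polynomial loss in $\alpha$, and then apply Marcinkiewicz interpolation with a higher-exponent estimate (available from \eqref{assumpiton 4} at any $p' > p_0$) to obtain the global strong-type estimate \eqref{globalestimate} for every $p > p_0$.

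First, by atomic/dyadic decomposition and duality, I reduce matters to controlling the level set $E_\alpha := \{x \in \R^n : |T^\lambda \chi_F(x)| > \alpha\}$ for measurable $F \subset \R^{n-1}$. Using the essential support properties of $T^\lambda$ (the amplitude is compactly supported after rescaling, together with Lemma \ref{lemsuppT}), I may assume $E_\alpha \subset B^n_{C\lambda}(0)$ for some absolute constant $C$, so that at most $\lambda^{O(1)}$ balls are ever needed in the covers below.

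Second, I sparsify. Choose a scale $R$ (to be optimized) and cover $E_\alpha$ by a finitely overlapping collection of $R$-balls $\{B_j\}_{j=1}^N$ with $N \lesssim (\lambda/R)^n$. A greedy algorithm produces a maximal $(RN)^{\bar C}$-separated sub-family $\mathcal{S}_1$: start from an arbitrary ball, add it to $\mathcal{S}_1$ and remove every ball within distance $(RN)^{\bar C}$ of its center; iterate until no balls remain. The resulting family is sparse in the sense of the lemma, and every discarded ball lies within distance $(RN)^{\bar C}$ of a ball in $\mathcal{S}_1$.

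Third, I apply the sparse estimate and iterate. On $S_1 := \bigcup_{B \in \mathcal{S}_1} B$ the hypothesis \eqref{sparse estimate} yields $\|T^\lambda \chi_F\|_{L^{p_0}(S_1)} \lesssim_\varepsilon R^\varepsilon |F|^{1/p_0}$, so Chebyshev gives $|E_\alpha \cap S_1| \lesssim_\varepsilon R^{\varepsilon p_0} \alpha^{-p_0}|F|$. The residual set $E_\alpha \setminus S_1$ is then re-covered at the enlarged scale $R_2 := (RN)^{\bar C}$ and the same procedure is repeated. Since each iteration enlarges the working radius by a polynomial factor, after $O_\varepsilon(1)$ iterations the scale exceeds $\lambda$; at that point $E_\alpha$ is contained in $\lambda^{O(1)}$ balls of radius $\lambda$, where a direct application of \eqref{assumpiton 4} at some $p' > p_0$ closes the estimate. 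Balancing the iteration by choosing $R$ as a small power of $\alpha$ converts each $R^\varepsilon$-factor into $\alpha^{-\delta}$ for arbitrarily small $\delta > 0$, producing a restricted weak-type bound strictly better than the $p_0$-endpoint, and Marcinkiewicz interpolation then gives \eqref{globalestimate} for every $p > p_0$.

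\emph{The main obstacle} I anticipate is justifying the iterative sparsification without double-counting. One must ensure that wave packet contributions from distinct sparse clusters at successive scales do not overlap significantly; in the translation-invariant setting of the cone extension operator this follows from Fourier support considerations, but here $T^\lambda$ is variable-coefficient, so I would rely on the wave packet analysis of Section \ref{section-preliminary}, in particular Lemma \ref{lemsuppT}, to furnish the required localization (with rapidly decaying tails controlled by the separation $(RN)^{\bar C}$). A secondary technical point is to keep the number of iterations bounded by $O_\varepsilon(1)$ so that the accumulated losses remain polynomial in $\alpha$; this forces a judicious initial choice of $R$ and tight book-keeping of the constant $\bar C$ appearing in the sparsity condition.
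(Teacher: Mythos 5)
Your proposal has a genuine gap in the iteration scheme.  The paper cites this lemma from \cite[Lemma 12.2]{GHI}, and the standard proof (which goes back to Tao's $\varepsilon$-removal in \cite{T}) does \emph{not} proceed by repeatedly extracting a maximal sparse sub-family, discarding the rest, and re-covering the residual at a larger scale.  The problem with that scheme is that it makes no net progress.  After extracting a maximal sparse sub-family at scale $R$, the residual may still be essentially all of $E_\alpha$, so you must re-cover at radius $R_2 = (RN)^{\bar C}$ with comparably many balls, and iterate.  Either you take $R \sim 1$ and the iteration count blows up to $\sim \log\log\lambda$ rather than $O_\varepsilon(1)$, or you take $R$ to be a small power of $\lambda$ so that the radii exceed $\lambda$ after $O(1)$ steps — but then the final radius is $\lambda^{O(1)}$, the accumulated loss is $\lambda^{O(\varepsilon)}$, and you have simply reproduced the $R^\varepsilon$-loss you started with.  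The ``balancing'' step you mention (choosing $R$ depending on $\alpha$) does not repair this, because the relevant loss is governed by the \emph{largest} radius appearing in the iteration, which your scheme forces up to $\lambda^{O(1)}$ regardless of the initial $R$.

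The correct ingredient, which your outline is missing, is the combinatorial covering lemma of Tao/GHI: any collection of $N$ unit balls can be partitioned — with nothing discarded — into $O(\log N)$ sparse sub-collections whose radii lie in $[1,\, N^{O(\bar C)}]$.  One then covers $E_\alpha$ by $\sim |E_\alpha|$ unit balls (so that $N \sim |E_\alpha|$, not $\sim (\lambda/R)^n$), applies the sparse estimate to each of the $O(\log|E_\alpha|)$ sub-collections with loss at most $|E_\alpha|^{O(\bar C\varepsilon)}$, and obtains $\alpha^{p_0}|E_\alpha| \lesssim_\varepsilon |E_\alpha|^{O(\varepsilon)}|F|$.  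Solving this for $|E_\alpha|$ gives a restricted weak-type $(p_0,p_0)$ bound with an $\alpha^{-O(\varepsilon)}$-loss, which real interpolation against the given $L^{p'}$ bound for $p'>p_0$ absorbs to yield \eqref{globalestimate} for every $p>p_0$.  Note also that your stated ``main obstacle'' — wave-packet overlap across sparse clusters — is not where the difficulty lies: the sparse estimate \eqref{sparse estimate} is invoked as a black box, and Lemma \ref{lemsuppT} plays no role in this reduction.  The real content is (i) the combinatorial partition into $O(\log N)$ sparse families and (ii) expressing the loss as a power of $|E_\alpha|$ so that it can be removed by interpolation, neither of which appears correctly in your outline.
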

The main result of this subsection is stated below and can be deduced by a minor modification of the argument in \cite[Lemma 12.3]{GHI}. 
\begin{proposition}
    Suppose $p_0 \geq 2$. Let $T^{\lambda}$ be an oscillatory integral operator satisfying the cinematic curvature condition. Assume that \eqref{assumpiton 4} holds uniformly for $1 \leq R \leq \lambda$ whenever $p \geq p_0$. Then for all $\varepsilon >0$ the estimate 
\begin{equation}\label{sparse estimate} \|T^{\lambda}f\|_{L^{p_0}(S)} \lesssim_{\varepsilon, \phi, a} R^{\varepsilon}\|f\|_{L^r(B^{n-1})} 
\end{equation} holds whenever $R \geq 1$ and $S \subseteq \R^n$ is a union of $R$-balls belonging to a sparse collection. 
\end{proposition}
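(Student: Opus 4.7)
The plan is to follow the $\varepsilon$-removal argument of Tao \cite{T}, in the form adapted to oscillatory integral operators in \cite[Lemma 12.3]{GHI}, modified to incorporate the cinematic curvature phase in place of the paraboloid phase. By $L^{p_0}$-$L^{p_0'}$ duality, the target estimate is equivalent to
\begin{equation*}
    \|(T^{\lambda})^{*} g\|_{L^{p_0'}(\R^{n-1})} \lesssim_{\varepsilon,\phi,a} R^{\varepsilon}\|g\|_{L^{p_0'}(\R^n)}
\end{equation*}
for every $g$ supported on the sparse union $S = \bigcup_{j=1}^{N} B(x_j, R)$. Decomposing $g = \sum_j g_j$ with $\supp g_j \subset B(x_j, R)$, the dualized local hypothesis yields $\|(T^{\lambda})^{*} g_j\|_{p_0'} \lesssim R^{\varepsilon}\|g_j\|_{p_0'}$ for each $j$; a naive triangle inequality would cost a factor $N^{1/p_0}$, which must be absorbed via the sparsity.

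The key ingredient is an off-diagonal decay estimate for the Schwartz kernel
\begin{equation*}
    K(x,y) = \int e^{-2\pi i(\phi^{\lambda}(x;\xi) - \phi^{\lambda}(y;\xi))}\, a^{\lambda}(x;\xi)\,\overline{a^{\lambda}(y;\xi)}\, d\xi
\end{equation*}
of $T^{\lambda}(T^{\lambda})^{*}$. Writing $\partial_{\xi}[\phi^{\lambda}(x;\xi) - \phi^{\lambda}(y;\xi)] = \partial^{2}_{x\xi}\phi \cdot (x-y)/\lambda + O(|x-y|^{2}/\lambda^{2})$ and using \textbf{(H2)}, the phase gradient in $\xi$ is comparable to the projection of $x-y$ onto the $(n-1)$-dimensional image of $\partial^{2}_{x\xi}\phi$. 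Repeated integration by parts produces rapid decay away from a thin conic neighborhood of the one-dimensional kernel direction of $(\partial^{2}_{x\xi}\phi)^{\top}$; on that neighborhood, stationary phase in $\xi$ combined with the rank-and-signature assumption \textbf{(H3)} yields $|K(x,y)| \lesssim (1+|x-y|)^{-(n-2)/2}$, while off it $|K(x,y)| \lesssim_M (1+|x-y|)^{-M}$ for every $M$.

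Expanding $\|(T^{\lambda})^{*} g\|_{2}^{2} = \sum_{i,j}\langle T^{\lambda}(T^{\lambda})^{*} g_i, g_j\rangle$ and combining this kernel bound with the $(RN)^{\bar{C}}$-separation of the ball centres, the off-diagonal contributions are bounded by $(RN)^{-\bar{C}(n-2)/2}\|g\|_{2}^{2}$, negligible once $\bar{C}$ is chosen large. Hence $\|(T^{\lambda})^{*} g\|_{2}^{2} \lesssim \sum_j \|(T^{\lambda})^{*} g_j\|_{2}^{2}$ modulo an acceptable error. A real-interpolation argument between this near-orthogonal $L^{2}$ bound and the dualized local $L^{p_0'}$ bounds absorbs the $N^{1/p_0}$ factor, as in \cite[Lemma 12.3]{GHI}, yielding the claim with only an $R^{\varepsilon}$ loss.

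The main obstacle is the kernel estimate itself: the cinematic curvature phase possesses a one-dimensional null direction (from the degree-one homogeneity in $\xi$) that is absent in the elliptic paraboloid setting of \cite{GHI}, so the required $(1+|x-y|)^{-(n-2)/2}$ decay must be extracted from \textbf{(H3)} via a stationary-phase analysis that is uniform in $\lambda$ and that carefully treats the thin neighborhood of the characteristic variety. Once this kernel bound is in hand, the remainder is a direct transcription of Tao's framework.
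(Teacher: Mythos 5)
Your plan follows the same Tao--GHI $\varepsilon$-removal strategy as the paper: reduce to an $L^2$ near-orthogonality statement for the contributions from separated balls, establish it via off-diagonal decay of an oscillatory integral that exploits \textbf{(H3)}, and interpolate to absorb the $N^{1/p_0}$ loss. The setup differs slightly: you compute with the $T^\lambda(T^\lambda)^*$ kernel directly, whereas the paper, following \cite[Lemma 12.3]{GHI}, introduces the frequency-localized snapshots $P_j f := \widehat{\eta}_{R_1}*\bigl[e^{2\pi i\phi^\lambda(x_j;\cdot)}\psi f\bigr]$ and proves near-orthogonality of $\{P_j^*\}$, so that the oscillatory integral to estimate, $\check{G}_{j_1,j_2}(z)$, has its phase frozen at the ball centers with the remaining variable $z$ confined to $|z|\lesssim R_1\ll|x_{j_1}-x_{j_2}|$; this streamlines the stationary-phase bookkeeping relative to a bound uniform over all $(x,y)\in B(x_{j_1},R)\times B(x_{j_2},R)$, but the analytic content is the same and both routes defer to GHI for the final interpolation. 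What you rightly flag as the main obstacle --- extracting $(n-2)/2$ decay from \textbf{(H3)} despite the one-dimensional null direction of the $\xi$-Hessian --- is exactly where the paper does the work: it covers the frequency ball by $O_\phi(1)$ open sets $D_0,D_1,\dots,D_{n-1}$, where $D_0$ is the region in which $\pm(x_{j_2}-x_{j_1})/|x_{j_2}-x_{j_1}|$ is quantitatively separated from the Gauss direction $G^\lambda(x_{j_1};\omega)$, giving a phase gradient $\gtrsim|x_{j_1}-x_{j_2}|$ and hence $|x_{j_1}-x_{j_2}|^{-(n-1)}$ decay by non-stationary phase, and each $D_i$ ($1\le i\le n-1$) is a region where the $(n-2)\times(n-2)$ Hessian minor $M_i(\omega)$ (the $\omega_i$-row and column deleted) is uniformly non-degenerate, so a van der Corput estimate in the remaining $n-2$ variables yields $|x_{j_1}-x_{j_2}|^{-(n-2)/2}$. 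That such a finite cover with uniform constants exists is precisely what \textbf{(H3)} buys you, and your sketch leaves this decomposition --- which is the substantive part of the proof --- implicit.
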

\begin{proof}
Let $\{B(x_j,R)\}_{j=1}^N$ be a sparse collection of balls whose union equals
the set $S$. It is clear that we may assume $R \ll \lambda$ and that each
ball $B(x_j,R)$ intersects the $x$-support of $a^{\lambda}$. Moreover, let
$c_{\mathrm{diam}} > 0$ be a sufficiently small constant, chosen to meet the
requirements of the argument below. By applying a partition of unity, we may
further assume that $\operatorname{diam} X < c_{\mathrm{diam}}$, and hence
\begin{equation}\label{not too bad separation}
\frac{|x_{j_1} - x_{j_2}|}{\lambda} \lesssim c_{\mathrm{diam}} \qquad \textrm{for all $1 \leq j_1, j_2 \leq N$.}
\end{equation}

Fix a function $\eta \in C^{\infty}(\mathbb{R}^{n-1})$ such that
$0 \le \eta \le 1$, $\supp \eta \subset B^{n-1}(0,1)$, and
$\eta(z)=1$ for all $z \in B^{n-1}(0,1/2)$. Let
$R_1 := CR$, where $C \ge 1$ is a sufficiently large constant, and
define $\eta_{R_1}(z) := \eta(z/R_1)$. Let $R_2 := (RN)^{\bar{C}}$.
In addition, let $\psi \in C_c^{\infty}(\mathbb{R}^{n-1})$ satisfy
$0 \le \psi \le 1$, $\supp \psi \subset \Omega$, and
$\psi(\omega)=1$ for all $\omega$ in the $\omega$-support of $a^{\lambda}$.

Fix $1 \le j \le N$. We decompose
\[
e^{2\pi i \phi^{\lambda}(x_j;\,\cdot\,)} \psi f
= P_j f
+ \bigl(e^{2\pi i \phi^{\lambda}(x_j;\,\cdot\,)} \psi f - P_j f\bigr)
=: P_j f + f_{j,\infty},
\]
where
\[
P_j f := \widehat{\eta}_{R_1} *
\bigl[e^{2\pi i \phi^{\lambda}(x_j;\,\cdot\,)} \psi f\bigr].
\]
As shown in \cite[Proof of Lemma 12.3]{GHI}, it suffices to show that
\begin{equation} \label{77491}
\big(\sum_{j=1}^N \|P_j f\|_{L^r(\mathbb{R}^{n-1})}^p\big)^\frac{1}{p} \lesssim \|f\|_{L^r(\mathbb{R}^{n-1})}
\end{equation}
This estimate follows by interpolating between the endpoint cases $p=r$ and $p=\infty$. By Young's inequality,
\[\|P_j f\|_{L^r(\mathbb{R}^{n-1})} \leq \|\widehat{\eta}_{R_1}\|_{L^1(\mathbb{R}^{n-1})} \|f\|_{L^r(\mathbb{R}^{n-1})} \lesssim \|f\|_{L^r(\mathbb{R}^{n-1})}\] 
and thus the estimate \eqref{77491} holds when $p = \infty$. It remains to check that
\[\big(\sum_{j=1}^N \|P_j f\|_{L^r(\mathbb{R}^{n-1})}^r\big)^\frac{1}{r} \lesssim \|f\|_{L^r(\mathbb{R}^{n-1})}\]
This estimate follows via interpolation between the endpoint cases $r=2$ and $r=\infty$. For $r=\infty$ this estimate follows from Young's inequality. Now we check that this estimate holds for $r=2$. By duality, it suffices to show
\begin{equation} \label{77492}
\|\sum_{j=1}^N P_j^* g_j\|_{L^2(\R^{n-1})}^2 \lesssim \sum_{j=1}^N \| g_j\|_{L^2(\R^{n-1})}^2.
\end{equation}
holds uniformly for all $g_1,g_2,\dots,g_N \in L^2(\mathbb{R}^{n-1})$. The left-hand side of the above expression equals
\begin{equation} \label{expressionn}
\sum_{j_1, j_2 = 1}^{N} \int_{\R^{n-1}} \overline{G_{j_1,j_2}(\omega)} \hat{\eta}_{R_1} \ast g_{j_1}(\omega)\overline{\hat{\eta}_{R_1} \ast g_{j_2}(\omega)}\ d\omega
\end{equation}
where 
\begin{equation*}
 G_{j_1,j_2}(\omega) := e^{2\pi i (\phi^{\lambda}(x_{j_1}; \omega) - \phi^{\lambda}(x_{j_2}; \omega))} \psi(\omega)^2.
\end{equation*}
By Plancherel's theorem, each summand in \eqref{expressionn} can be written as
\begin{equation*}
 \int_{\R^{n-1}} \overline{\check{G}_{j_1,j_2}(z)} (\eta_{R_1}\check{g}_{j_1}) \ast (\eta_{R_1}\check{g}_{j_2})^{\sim}(z) d z;
\end{equation*}
here $(\eta_{R_1}\check{g}_{j_2})^{\sim}(z) := \overline{(\eta_{R_1}\check{g}_{j_2})(-z)}$. 

Fix $1 \leq j_1, j_2 \leq N$ with $j_1 \neq j_2$, and choose $z \in \R^{n-1}$ satisfying $|z| \lesssim R_1 \ll R_2 \leq |x_{j_2} - x_{j_1}|$. Consider
\[ \check{G}_{j_1,j_2}(z) = \int_{\R^{n-1}} e^{2\pi i (\langle z, \omega \rangle + \phi^{\lambda}(x_{j_1}; \omega) - \phi^{\lambda}(x_{j_2}; \omega))} \psi(\omega)^2 d\omega.\]

Next, we decompose $B^{n-1}_1(0)$ into $O_\phi(1)$ finitely overlapping open sets
$D$ such that
\begin{equation}\label{bound each off diagonal}
\int_{\mathbb{R}^{n-1}}
e^{2\pi i \bigl(\langle z,\omega\rangle
+ \phi^{\lambda}(x_{j_1};\omega)
- \phi^{\lambda}(x_{j_2};\omega)\bigr)}
\,\psi(\omega)^2\,\psi_D(\omega) d\omega
\;\lesssim\;
R_2^{-\frac{n-2}{2}},
\end{equation}
where $\{\psi_D\}$ is a smooth partition of unity subordinate to this covering. Once we have such a decomposition, \eqref{7749} follows by an application of the Hausdorff--Young inequality together with H\"older's inequality.

We now start the partitioning. Let $C'>0$ be a large constant and
\[ D_0:= \{\om: \om \in B_{n-1}^1(0), \big| \pm \frac{x_{j_2} - x_{j_1}}{|x_{j_2} - x_{j_1}|} - G^{\lambda}(x_{j_1};\omega)\big| > C'c_{\mathrm{crit}} \} \]
\[ U_0:= \{\om: \om \in B_{n-1}^1(0), \big| \pm \frac{x_{j_2} - x_{j_1}}{|x_{j_2} - x_{j_1}|} - G^{\lambda}(x_{j_1};\omega)\big| < 2C'c_{\mathrm{crit}} \} \]
Define
\[M(\om_0) := \partial_{\omega \omega}\langle \partial_x\phi^{\lambda}(x_{j_1}; \omega), G^{\lambda}(x;\omega_0) \rangle|_{\omega = \omega_0}\]
and let $M_i(\om_0)$ denote the matrix obtained by deleting the $\om_i$-row and column of $M(\om_0)$. Let $c'>0$ be a sufficiently small constant. Define
\begin{equation} \label{finaldef}
U_i := \{\om: \om \in B_{n-1}^1(0), \det M_i(\om) > c' \}
\end{equation}
\[D_i := U_i \cap U_0\]
The sets $D_0,D_1,\dots,D_{n-1}$ form an open cover of $B_{n-1}^1(0)$. Let $\{\psi_{D_0},\psi_{D_1},\dots,\psi_{D_{n-1}}\}$ be a smooth partition of unity subordinate to this covering. 

We now verify \eqref{bound each off diagonal}. If $\om \in D_0$, then $x_{j_1}-x_{j_2}$ makes a quantitative angle with the kernel of $\phi^{\lambda}(x_{j_1};\omega)$, and hence
\begin{multline*}
    \partial_{\omega} \bigl(\langle z,\omega\rangle + \phi^{\lambda}(x_{j_1};\omega) - \phi^{\lambda}(x_{j_2};\omega)\bigr) = \langle \partial_{\omega x}\phi^\la(x_{j_1}; \omega), x_{j_1} - x_{j_2} \rangle \\ + O(c_{\mathrm{diam}}|x_{j_1} - x_{j_2}|) \gtrsim |x_{j_1} - x_{j_2}| 
\end{multline*}
Then the method of stationary phase gives
\[
\int_{\mathbb{R}^{n-1}}
e^{2\pi i \bigl(\langle z,\omega\rangle
+ \phi^{\lambda}(x_{j_1};\omega)
- \phi^{\lambda}(x_{j_2};\omega)\bigr)}
\,\psi(\omega)^2\,\psi_{D_0}(\omega) d\omega
\;\lesssim\;
|x_{j_1}-x_{j_2}|^{-(n-1)} \lesssim R_2^{-(n-1)}
\]
If $\om \in D_i$ for some $i \in [1,n-1]$, then we have
\[\partial_{\omega' \omega'} \bigl(\langle z,\omega\rangle + \phi^{\lambda}(x_{j_1};\omega) - \phi^{\lambda}(x_{j_2};\omega)\bigr) = |x_{j_1} - x_{j_2}| M_i(\om) + O(C'c_{\mathrm{crit}}|x_{j_1} - x_{j_2}|) \]
where $\om' = (\om_1,\dots, \om_{i-1},\om_{i+1},\dots,\om_{n-1})$. Recall \eqref{finaldef}, if $C'c_{\mathrm{crit}}$ is small enough, then
\[ \det \partial_{\omega' \omega'} \bigl(\langle z,\omega\rangle + \phi^{\lambda}(x_{j_1};\omega) - \phi^{\lambda}(x_{j_2};\omega)\bigr) \gtrsim |x_{j_1} - x_{j_2}|^{n-2}\]
By higher dimensional versions of van der Corput’s lemma,
\begin{multline*}
    \int_{\mathbb{R}^{n-1}}
e^{2\pi i \bigl(\langle z,\omega\rangle
+ \phi^{\lambda}(x_{j_1};\omega)
- \phi^{\lambda}(x_{j_2};\omega)\bigr)}
\,\psi(\omega)^2\,\psi_{D_i}(\omega) d\omega \\ = \int_1^1 \int_{\mathbb{R}^{n-2}}
e^{2\pi i \bigl(\langle z,\omega\rangle
+ \phi^{\lambda}(x_{j_1};\omega)
- \phi^{\lambda}(x_{j_2};\omega)\bigr)}
\,\psi(\omega)^2\,\psi_{D_i}(\omega) d\omega' d\omega_i \\ \lesssim |x_{j_1} - x_{j_2}|^{-\frac{n-2}{2}} \lesssim R_2^{-\frac{n-2}{2}}
\end{multline*}
\end{proof}
Now we conclude that the estimate \eqref{linear estimate} holds for $p$ satisfying \eqref{main range} and $r$ satisfying \[\frac{r}{r-1} < \frac{n-2}{n} p\] It remains to verify the boundary case $\frac{r}{r-1} = \frac{n-2}{n} p$. This follows from from the bilinear interpolation argument in \cite{OW,S}.

\bigskip

\end{document}